\newcommand{\uu}{\underline{u}}
\newcommand{\uv}{\underline{v}}
\newcommand{\ou}{\overline{u}}
\newcommand{\ov}{\overline{v}}
\newcommand{\iTO}{\int_0^T\!\!\int_{\Omega}}
\newcommand{\iTG}{\int_0^T\!\int_{\Gamma}}
\numberwithin{equation}{section}
\newtheorem{theorem}{Theorem}[section]
\newtheorem{lemma}[theorem]{Lemma}
\newtheorem{proposition}[theorem]{Proposition}
\newtheorem{definition}{Definition}[section]
\newtheorem{remark}{Remark}[section]
\newproof{proof}{Proof}
\begin{document}
	
	\begin{frontmatter}
		
		\title{Well-posedness and exponential equilibration of a volume-surface reaction-diffusion system with nonlinear boundary coupling}
		
		
		\author[mymainaddress]{Klemens Fellner\corref{mycorrespondingauthor}}
		\cortext[mycorrespondingauthor]{Corresponding author}
		\ead{klemens.fellner@uni-graz.at}
		
		\author[mymainaddress]{Evangelos Latos}
		\ead{evangelos.latos@uni-graz.at}
		
		\author[mymainaddress]{Bao Quoc Tang}
		\address[mymainaddress]{Institute of Mathematics and Scientific Computing, University of Graz, Heinrichstra{\ss}e 36, 8010 Graz, Austria}
		\ead{quoc.tang@uni-graz.at}		
		
		\begin{abstract}
			We consider a model system consisting of two reaction-diffusion equations, 
			where one species diffuses in a volume while the
			other species diffuses on the surface which surrounds the volume. The two equations are coupled via a nonlinear reversible Robin-type boundary condition for the volume species and a matching reversible source term for the boundary species. As a consequence of the coupling, the total mass of the two species is conserved. 
			The considered system is motivated for instance by models for asymmetric stem cell division.
			
			Firstly we prove the existence of a unique weak solution via an iterative method of converging upper and lower solutions to overcome the difficulties of the nonlinear boundary terms. Secondly, our main result shows explicit exponential convergence to equilibrium via an entropy method after deriving a suitable entropy entropy-dissipation estimate for the considered nonlinear volume-surface reaction-diffusion system.
		\end{abstract}
		
		\begin{keyword}
			volume-surface reaction-diffusion \sep nonlinear boundary conditions \sep global existence \sep exponential convergence to equilibrium
			\MSC[2010] 35K61\sep 35A01\sep 35B40\sep 35K57
		\end{keyword}
		
	\end{frontmatter}

	
	\section{Introduction}
	In this paper, we consider a nonlinear volume-surface reaction-diffusion system, which couples a non-negative volume-concentration $u(x,t)$ diffusing on a bounded domain $\Omega \subset \mathbb R^N (N\geq 1)$ with a non-negative surface-concentration $v(x,t)$ diffusing on the sufficiently smooth boundary $\Gamma:= \partial\Omega$ of $\Omega$ (e.g. $\partial\Omega\in C^{2+\epsilon}$ for $\epsilon>0$). 
	
	The interface conditions connecting these two concentrations are a nonlinear Robin-type boundary condition for the volume-concentration $u(x,t)$ and a matching reversible reaction source term in the equation for the surface-concentration $v(x,t)$: 
	\begin{equation}
	\begin{cases}
	u_t - \delta_{u}\Delta u = 0, &x\in\Omega, t>0,\\
	\delta_u\frac{\partial u}{\partial \nu} = -\alpha(k_u u^{\alpha} - k_v v^{\beta}),&x\in\Gamma, t>0,\\
	v_t - \delta_{v}\Delta_{\Gamma}v= \beta(k_u u^{\alpha} - k_v v^{\beta}), &x\in\Gamma, t>0,\\
	u(0,x) = u_0(x)\ge0, &x\in\Omega,\\
	v(0,x) = v_0(x)\ge0, &x\in\Gamma.
	\end{cases}
	\label{e1}
	\end{equation} 
	Here, we denote by $\Delta$ the Laplace operator on $\Omega$ with a positive diffusion coefficient $\delta_u>0$ and by $\Delta_{\Gamma}$ the Laplace-Beltrami operator on $\Gamma$ (see e.g. \cite{GT}) with a non-negative diffusion coefficient $\delta_v \geq 0$, and $\nu(x)$ denotes the unit outward normal vector of $\Gamma$ at the point $x$. Moreover, we shall consider nonnegative initial concentrations $u_0(x)\geq 0$ on $\Omega$ and $v_0(x)\geq 0$ on $\Gamma$. 
	
	{The stoichiometric coefficients $\alpha, \beta\in [1,\infty)$ together with the positive, bounded reaction rates $k_u(t,x)$, $k_v(t,x) \in L_{+}^{\infty}([0,\infty)\times \Gamma)$ characterise the key feature of the model system \eqref{e1}, which is the nonlinear reversible reaction between the volume density $u(t,x)$ and 
		the surface density $v(t,x)$ located at the boundary $\Gamma$.}
	
	We emphasise that the reversible reaction between volume- and boundary-concentrations in system \eqref{e1} {\it preserves the total initial mass} $M$, which shall be assumed positive in the following:
	\begin{align}
	\label{cons}
	M=\beta\int_{\Omega}u(t,x)\,dx + \alpha\int_{\Gamma}v(t,x)\,dS
	=\beta\int_{\Omega}u_0(x)\,dx + \alpha\int_{\Gamma}v_0(x)\,dS>0,\qquad \forall t\ge 0.
	\end{align}
	
	The study of system \eqref{e1} is motivated by models of \emph{asymmetric stem cell division}. In stem cells undergoing asymmetric cell division, particular proteins (so-called cell-fate determinants) are localised in only one of the two daughter cells during mitosis. These cell-fate determinants trigger in the following the differentiation of one daughter cell into specific tissue while the other daughter cell remains a stem cell. 
	
	In Drosophila, SOP stem cells provide a well-studied biological example model of asymmetric stem cell division, 
	see e.g. \cite{BMK,MEBWK,WNK} and the references therein. The mechanism of asymmetric cell division in SOP stem cells operates around a key protein called Lgl (Lethal giant larvae), which exists in two conformational states: a non-phosphorylated form which regulates the localisation of the cell-fate-determinants in the membrane of one daughter cell, and a phosphorylated form which is inactive. 
	
	First mathematical models describing the evolution and localisation of phosphorylated and non-phospho\-ryl\-ated Lgl in SOP stem cells were presented in \cite{BFR, Ros} under the assumption of  linear phosphorylation and de-phosphorylation kinetics. However, it is known that Lgl offers three phosphorylation sites \cite{BMK}. Thus, if more than one site needs to be phosphorylated in order to effectively deactivate Lgl, a realistic model should rather consider nonlinear kinetics. 
	
	\medskip
	
	The system \eqref{e1} formulates a nonlinear mathematical core model, which strongly simplifies the biological model 
	for SOP stem cells by focussing only on the concentration $u(x,t)$ of the phosphorylated Lgl in the cytoplasm (i.e. in the cell volume) and the concentration $v(x,t)$
	of non-phosphorylated Lgl at the cortex/membrane of the cell. 
	The exchange of phosphorylated Lgl $u(x,t)$ and non-phosphorylated Lgl $v(x,t)$ is described by the above nonlinear reaction located at the boundary. The considered evolution process conserves the total mass of Lgl as quantified in the conservation law \eqref{cons}.
	
	Volume-surface reaction-diffusion systems describing models related to \eqref{e1} have recently gained rapidly increasing attention 
	as they occur naturally in many areas of applied mathematics as cell-biology, ecology and also fluid-dynamics, see e.g. \cite{AJLRRW,ElRa,FrNeRa,KD,MS,NGCRSS, BRR13a, BRR13b, BCRR15,  KD01, MCV15} and references therein. 
	
	\medskip
	
	The first aim of this paper is to prove the global existence of a unique weak solution to the model system \eqref{e1} {under certain technical assumptions on the reaction rates $k_u(t,x)$ and $k_v(t,x)$} (see Theorem \ref{theo:ExistenceAndUniqueness} below). The main difficulties arise from the arbitrary power-law nonlinearities located at the boundary $\Gamma$ and shall be overcomed by applying an iteration method of converging upper and lower solutions, in the spirit of e.g. \cite{Pao}. This method is based on proving a comparison principle for upper and lower solutions (see e.g. \cite{Souplet}), which so far - up to our knowledge - has not been established for volume-surface reaction-diffusion systems. Once the comparison principle is shown, the existence of weak solutions to \eqref{e1} follows from an iteration argument, which uses {the fact} that the involved nonlinearities are quasi-monotone non-decreasing. The existence of solutions to related linear models was proven in \cite{ElRa,FrNeRa} by fix-point methods. Our approach has the advantage of providing intrinsic a-priori bounds, which allows {us} to obtain global solutions {to} the superlinear problem \eqref{e1}.   
	\medskip
	
	 {In the} second part of the manuscript, our main result proves an  \emph{explicit exponential convergence to equilibrium} for the system \eqref{e1} via the so-called \emph{entropy method}. 
	The basic idea of the entropy method consists of studying the large-time asymptotics of a dissipative PDE model by looking for a nonnegative Lyapunov 
	functional $E(f)$ and its nonnegative dissipation 
	$$
	D(f)=-\frac{d}{dt} E(f(t))
	$$ 
	along the flow of the PDE model. We shall show that the entropy structure of system \eqref{e1} is
	well-behaved in the following sense: firstly, all states with $D(f)=0$, which also satisfy all the involved conservation laws, identify a unique entropy-minimising equilibrium $f_{\infty}$, i.e.  
	$$
	D(f) = 0\quad \text{and \quad conservation laws} \iff f=f_{\infty},
	$$ 
	and secondly, there exists an \emph{entropy entropy-dissipation 
		estimate} of the form 
	$$
	D(f) \ge \Phi(E(f)-E(f_{\infty})), \qquad \Phi(x)\ge0, \qquad \Phi(x) = 0 \iff x=0,
	$$ 
	for some nonnegative function $\Phi$.  
	Generally, such an inequality can only hold when all the conserved quantities are taken into account.
	If $\Phi'(0) \neq 0$, one usually gets exponential convergence toward
	$f_{\infty}$ in relative entropy $E(f)-E(f_{\infty})$ with a rate, which can be explicitly estimated. 
	\medskip
	
	The entropy method is a fully nonlinear alternative to arguments based on linearisation around the equilibrium and has the advantage of being quite robust with respect to variations and generalisations of the model system. 
	This is due to the fact that the entropy method  
	relies mainly on functional inequalities which have no direct link with the original PDE model.
	Generalised models typically feature related entropy and entropy-dissipation functionals and previously established entropy entropy-dissipation estimates may very usefully be re-applied.
	
	The entropy method has previously been used for scalar equations: nonlinear
	diffusion equations (such as fast diffusions \cite{CV,dolbeault}, Landau equation \cite{DVlandauII}),
	integral equations (such as the spatially homogeneous 
	Boltzmann equation \cite{toscani_villani1, toscani_villani2, villani_cerc}), 
	kinetic equations (see e.g. \cite{DVinhom1,DVinhom2, fell}), or coagulation-fragmentation equations (see e.g. \cite{CDF08,CDF08a}).
	For certain systems of drift-diffusion-reaction equations in semiconductor physics, an entropy entropy-dissipation estimate has been shown indirectly via a compact\-ness-based contradiction argument in \cite{GGH96,GH97,Gro92}.
	\medskip
	
	A first proof of entropy entropy-dissipation estimates for systems with explicit rates and constants was established in \cite{DeFe06, DeFe07, DeFe08} in the case of reversible reaction-diffusion equations. Recently, a new idea of proving entropy entropy-dissipation estimates in a general setting based on a convexification argument was presented in \cite{MHM}. 
	\medskip
	
	In this paper, we shall prove a new entropy entropy-dissipation estimate for the model system \eqref{e1}, which entails exponential convergence to equilibrium with explicitly computable constants and rates (see Theorem \ref{theo:Convergence} below).
	
	We remark two novelties:  i) this is (up to our knowledge) the first entropy entropy-dissipation estimate for a mixed volume-surface reaction-diffusion system, and ii) secondly, we introduce a new idea in the proof of entropy entropy-dissipation estimates for a system with general, superlinear, power-like nonlinearities, which we hope to turn out very useful when proving entropy entropy-dissipation estimates in more general settings. {This idea is indeed extended to more general systems which contain the system of this paper as a sub-case. Interested readers are referred to  \cite{DFT16,FT2015} for more details.}
	
	Moreover, we remark that, although the existence of weak solutions is obtained for general reaction rates $k_u$ and $k_v$, which can depend on time and space, we restrict for the sake of clarity the proof of explicit exponential convergence to equilibrium to the case of constant rates $k_u$ and $k_v$. The case of non-constant (in space and/or e.g. periodic in time) reactions rates leads to non-constant equilibria and requires a more involved formalism which can be treated in future works.
	\medskip
	
	We emphasise that we distinguish two cases in the equilibration analysis of \eqref{e1}: The non-degenerate diffusion case $\delta_v >0$ and the degenerate diffusion case $\delta_v = 0$. If $\delta_v >0$, then the surface diffusion term $-\delta_v\Delta_{\Gamma}v$ enables us to obtain an entropy entropy-dissipation estimate by only using the natural a-priori estimates derived from mass conservation, entropy and entropy-dissipation. 
	In the case of degenerate boundary diffusion $\delta_v = 0$, we derive an entropy entropy-dissipation estimate by using $L^{\infty}$ a-priori bounds of the solution. While such $L^{\infty}$-bounds can be shown to hold for the model  \eqref{e1}, they are often out of reach for more general systems with more concentrations in higher space dimensions, see e.g. \cite{CDF13}.
	However, we conjecture that in some (yet not all) cases of stoichiometric coefficients $\alpha, \beta$, the use of $L^{\infty}$-bounds should not be essential for the proof and could be avoided by more careful estimates. An example of such an estimate is presented in Proposition \ref{re:linear} when $\alpha = \beta = 1$.
	\medskip
	
	For future work, we hope that the robustness of the entropy method will enable us to study the 
	large time behaviour of more complicated and realistic models of asymmetric cell division by reusing  
	the entropy entropy-dissipation estimate derived in Theorem \ref{lem:E-EDEstimate} for the non-degenerate case $\delta_v >0$ and Lemma \ref{lem:degenerate_estimate} for the degenerate case $\delta_v =0$. 
	Thus, the considered mathematical core problem \eqref{e1} is also motivated by the goal of deriving 
	core entropy entropy-dissipation estimates, which encompasses the nonlinear boundary dynamics featured by the system \eqref{e1}.
	\medskip
	
	{ The rest of the paper is organised as follows. In Section 2, we show the global existence of a unique weak solution for system \eqref{e1} {under suitable assumptions on the reaction rates $k_u(t,x)$ and $k_v(t,x)$}. Parts of the proof of the existence theorem shall be detailed in the Appendix \ref{App}. Section 3 is devoted to the entropy method, establishing entropy entropy-dissipation estimates and proving explicit exponential convergence to equilibrium.}
	
	\section{Existence of a global solution}
	In this section, we will prove global existence of a unique weak solution to system \eqref{e1}. Though the proof is long and technical, the lines follow from a standard approach of upper and lower solutions. That is why we will state the main existence result in this section and leave the full proof to the Appendix \ref{App}.
	
	We define first our notion of weak solutions:
	\begin{definition}\label{def:weaksolution_1}
		A pair of functions $(u,v)$ is called a {\it weak solution} to system \eqref{e1} on $(0,T)$ if
		\begin{equation}
		u\in C([0,T];L^2(\Omega)), \ \ \text{ and }\ \ u\in L^{\infty}(0,T;L^{\infty}(\Omega))\cap L^{2}(0,T;H^1(\Omega)),
		\label{regular_u}
		\end{equation}
		\begin{equation}
		v\in C([0,T]; L^2(\Gamma)),\ \ \text{ and }\ \  v\in L^{\infty}(0,T;L^{\infty}(\Gamma))\cap L^2(0,T;H^1(\Gamma)),
		\label{regular_v}
		\end{equation}
		and the following weak formulation
		holds for all test functions $\varphi\in C^1([0,T];L^2(\Omega))\cap L^2(0,T;H^1(\Omega))$ and $\psi\in C^1([0,T];L^2(\Gamma))\cap L^2(0,T;H^1(\Gamma))$ with $\varphi \geq 0$, $\psi\geq 0$ and $\varphi(T) = \psi(T) = 0$:
		\begin{equation}\label{weakformulation}
		\begin{cases}
		\iTO[-u\varphi_t + \delta_u\nabla u\nabla \varphi]dxdt
		= \int_{\Omega}u_0\varphi(0)dx-\alpha\iTG(k_uu^{\alpha} - k_vv^{\beta})\varphi dSdt,\\[2mm]
		\iTG[-v\psi_t + \delta_v\nabla_{\Gamma}v\nabla_{\Gamma}\psi]dSdt
		= \int_{\Gamma}v_0\psi(0)dS + \beta\iTG(k_uu^{\alpha} - k_vv^{\beta})\psi dSdt,\end{cases}
		\end{equation}
		in which $\nabla_{\Gamma}$ is the so called tangential gradient on $\Gamma${, i.e. 
			$\nabla_{\Gamma} v = \nabla v - (\nu \cdot \nabla v ) \nu$ see e.g. \cite[(16.4) on page 389]{GT}.}
	\end{definition}
	
	\begin{remark}\label{remark:Definition} 
		With the regularity of $u$ and $v$ as stated in \eqref{regular_u} and \eqref{regular_v}, all left hand terms in \eqref{weakformulation} are clearly well defined. For the nonlinear reaction terms $\int_{\Gamma}k_uu^{\alpha}\varphi dS$ on the right hand side of \eqref{weakformulation}, we proceed as follows: First, if $u\in H^1(\Omega)\cap L^{\infty}(\Omega)$, we have
		\begin{equation*}
		\begin{aligned}
		\int_{\Gamma}|u|^{2\alpha}dx = \|u^{\alpha}\|_{L^2(\Gamma)}^2&\leq C(\|\nabla(u^\alpha)\|_{L^2(\Omega)}^2 + \|u^{\alpha}\|_{L^2(\Omega)}^2)\qquad  (\text{by using the Trace Theorem})\\
		&\leq C(\alpha^2\|u\|_{L^{\infty}(\Omega)}^{2\alpha-2}\|\nabla u\|_{L^2(\Omega)}^2 + |\Omega|\|u\|_{L^{\infty}(\Omega)}^{2\alpha}).
		\end{aligned}
		\end{equation*}
		Hence, $u^{\alpha}|_{\Gamma}\in L^2(\Gamma)$. Therefore 
		the weak formulation in Definition \ref{def:weaksolution_1} is well defined.
	\end{remark}
	\begin{definition}\label{aecomparision}
		We shall use the following shorthand notation 
		$
		(u_1, v_1)\geq (u_2, v_2)
		$ 
		for two pairs of functions $(u_1, v_1)$ and $(u_2, v_2)$ where $u_i(t,x): I\times \Omega \rightarrow \mathbb R$ and $v_i(t,x): I\times \Gamma\rightarrow \mathbb R, \; i = 1, 2, I\subset \mathbb R$, which means that 
		\begin{align*}
		u_1(t,x)\geq u_2(t,x)\quad \text{ a.e. in }\quad I\times \Omega, \qquad 
		v_1(t,x)\geq v_2(t,x)\quad \text{ a.e. in }\quad I\times \Gamma.
		\end{align*}
	\end{definition}
	For the sake of brevity we define the notation
	\begin{equation}\label{F}
	F(t,x,u,v):= -\alpha\bigl(k_u(t,x)u^{\alpha} - k_v(t,x)v^{\beta}\bigr), \qquad (t,x)\in[0,\infty)\times\Gamma,
	\end{equation}
	and
	\begin{equation}\label{G}
	G(t,x,u,v) := \beta\bigl(k_u(t,x)u^{\alpha} - k_v(t,x)v^{\beta}\bigr),\qquad (t,x)\in[0,\infty)\times\Gamma.
	\end{equation}
	
	The upper and lower weak solutions are defined as follows:
	\begin{definition}\label{def:UpperLowerSolution}
		A pair $(\overline{u}, \overline{v})$ is called an upper solution to the problem \eqref{e1} if $(\ou, \ov)$ satisfy the regularity \eqref{regular_u} and \eqref{regular_v} and that for all test functions $\varphi\in C^1(0,T;L^2(\Omega))\cap L^2(0,T;H^1(\Omega))$, $\psi\in C^1(0,T;L^2(\Gamma))\cap L^2(0,T;H^1(\Gamma))$ with $\varphi, \psi\geq 0$ and $\varphi(T) = \psi(T) = 0$, we have
		\begin{equation}
		\begin{cases}
		\iTO[-\ou\varphi_t +  \delta_u\nabla\ou\nabla\varphi]dxdt - \iTG F(t,x,\ou,\ov)\varphi dSdt \geq \int_{\Omega}\ou(0)\varphi(0) dx,\\[1mm]
		\iTG[-\ov\psi_t +  \delta_v\nabla_{\Gamma}\ov\nabla_{\Gamma}\psi]dSdt - \iTG G(t,x,\ou,\ov)\psi dSdt \geq \int_{\Gamma}\ov(0)\psi(0) dS,\\[1mm]
		\ou(0,x)\geq u_0(x) \text{ a.e. } x\in\Omega,\\[1mm]
		\ov(0,x)\geq v_0(x) \text{ a.e. }  x\in\Gamma.
		\end{cases}
		\label{a3}
		\end{equation}
		{ Lower solutions are defined in a similar way by replacing $\geq$ with $\leq$.}
	\end{definition}
	{
	In order to apply the method of upper and lower solutions, we need a comparison principle for system \eqref{e1} for pairs of upper and lower solutions. The following lemma can be proved using similar arguments as \cite{Souplet}:
	\begin{lemma}[Comparison Principle for Pairs of Upper and Lower Solutions]\label{lem:comparison}\hfill\\
		Let $0<T<\infty$. Assume $\underline{u}$, $\overline{u}$ satisfy \eqref{regular_u} and $\underline{v}$, $\overline{v}$ satisfy \eqref{regular_v}. Moreover, assume that for all testfunctions $\varphi\in C^1(0,T;L^2(\Omega))\cap L^2(0,T;H^1(\Omega)), \psi\in C^1(0,T;L^2(\Gamma))\cap L^2(0,T;H^1(\Gamma))$ with $\varphi, \psi\geq 0$ and $\varphi(T) = \psi(T) = 0$, we have
		\begin{equation}\label{e2}
		\begin{cases}
		\iTO[-(\uu - \ou)\varphi_t + \delta_u\nabla(\uu - \ou)\nabla \varphi]dxdt\\
		\qquad-\iTG (F(t,x,\uu,\uv) - F(t,x,\ou, \ov))\,\varphi \,dSdt \leq \int_{\Omega}(\uu(0)-\ou(0))\varphi(0) dx,\\[1mm]
		\iTG[-(\uv - \ov)\psi_t + \delta_v\nabla_{\Gamma}(\uv - \ov)\nabla_{\Gamma}\psi]dSdt\\
		\qquad- \iTG(G(t,x,\uu,\uv)-G(t,x,\ou,\ov))\,\psi\, dSdt\leq \int_{\Gamma}(\uv(0) - \ov(0))\psi(0) dS,\\[1mm]
		\uu(0,x)\leq \ou(0,x), \;x\in\Omega,\\[1mm]
		\uv(0,x)\leq \ov(0,x), \;x\in\Gamma.
		\end{cases}
		\end{equation}
		Then, $(\uu,\uv)\leq (\ou,\ov)$ in the sense of Definition \ref{aecomparision}.
	\end{lemma}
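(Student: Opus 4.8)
The plan is to prove the comparison principle by a Gronwall-type energy argument, testing the difference inequalities \eqref{e2} with the positive parts $(\uu-\ou)_+$ and $(\uv-\ov)_+$ (suitably regularised in time so that they are admissible testfunctions). First I would fix $0<S\leq T$ and work on the truncated interval $(0,S)$; since the weak formulation only admits testfunctions vanishing at $T$, I would use the standard device of testing with $\varphi = (\uu-\ou)_+\,\chi_{[0,S]}$ mollified in time, or equivalently integrate the differential-in-time identity $\frac{d}{dt}\tfrac12\int_\Omega(\uu-\ou)_+^2\,dx$ obtained after justifying that $w:=\uu-\ou$ satisfies $w_t\in L^2(0,T;H^1(\Omega)')$ so that the chain rule $\langle w_t,w_+\rangle = \frac{d}{dt}\tfrac12\|w_+\|_{L^2(\Omega)}^2$ applies (Lions--Magenes). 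Using $\nabla w\cdot\nabla w_+ = |\nabla w_+|^2\geq 0$ and that $w_+(0)=0$ by the initial-data hypotheses, the volume inequality becomes
\begin{equation*}
\frac12\int_\Omega (\uu-\ou)_+^2(S)\,dx + \delta_u\int_0^S\!\!\int_\Omega|\nabla(\uu-\ou)_+|^2\,dx\,dt \leq \int_0^S\!\!\int_\Gamma \bigl(F(t,x,\uu,\uv)-F(t,x,\ou,\ov)\bigr)(\uu-\ou)_+\,dS\,dt,
\end{equation*}
and similarly for the surface inequality with $(\uv-\ov)_+$, $\Delta_\Gamma$, $\delta_v\geq 0$, and $G$ on the right.

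The next step is to bound the right-hand sides using the one-sided Lipschitz estimates from Lemma \ref{Nonlinearities}. On the set where $(\uu-\ou)_+>0$ we have $u_1:=\uu\geq \ou=:u_2$, hence $(u_2-u_1)_+=0$, and \eqref{FLipschitzupper} gives $F(t,x,\uu,\uv)-F(t,x,\ou,\ov)\leq \alpha L_v(t,x)(\uv-\ov)_+$; note that here I must first produce a common upper solution $(\ou,\ov)$ dominating both pairs in order to apply Lemma \ref{Nonlinearities}(iii) — this is available because in the iteration scheme all iterates lie between the fixed lower and upper solutions, and in the abstract statement one takes the pointwise maximum, which still satisfies \eqref{regular_u}--\eqref{regular_v}. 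Thus the volume term is controlled by $\alpha\|L_v\|_\infty\int_0^S\!\!\int_\Gamma(\uv-\ov)_+(\uu-\ou)_+\,dS\,dt$; by Young's inequality this is $\leq \tfrac{\alpha\|L_v\|_\infty}{2}\int_0^S\!\!\int_\Gamma\bigl[(\uu-\ou)_+^2 + (\uv-\ov)_+^2\bigr]\,dS\,dt$. The surface term, via \eqref{GLipschitzupper}, produces $\beta\|L_u\|_\infty\int_0^S\!\!\int_\Gamma(\uu-\ou)_+(\uv-\ov)_+\,dS\,dt$ plus a $\beta\|L_v\|_\infty\int_0^S\!\!\int_\Gamma(\uv-\ov)_+^2$ term (with a favourable sign, since $(v_2-v_1)_+$ contributes nothing on the relevant set and the surviving term is $\leq 0$ — or it is simply absorbed).

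Here is where the trace inequality (Lemma \ref{lem:TraceInequality}) enters and is, I expect, the main technical point: the boundary integral $\int_\Gamma(\uu-\ou)_+^2\,dS$ appearing on the right of the volume estimate is not directly controlled by the volume $L^2$-norm, so I would estimate $\int_\Gamma(\uu-\ou)_+^2\,dS \leq \varepsilon\int_\Omega|\nabla(\uu-\ou)_+|^2\,dx + C_\varepsilon\int_\Omega(\uu-\ou)_+^2\,dx$ and choose $\varepsilon$ small enough (proportional to $\delta_u$) that the gradient term is absorbed by the $\delta_u$-Dirichlet term on the left-hand side. Adding the volume and surface estimates and writing $y(S):=\int_\Omega(\uu-\ou)_+^2(S)\,dx + \int_\Gamma(\uv-\ov)_+^2(S)\,dS$, everything collapses to a differential inequality $y(S)\leq C\int_0^S y(t)\,dt$ with $y(0)=0$ and $C=C(\alpha,\beta,\|L_u\|_\infty,\|L_v\|_\infty,\delta_u,\Omega,\Gamma)$; Gronwall's lemma then forces $y\equiv 0$ on $[0,T]$, i.e. $(\uu-\ou)_+=0$ a.e. in $(0,T)\times\Omega$ and $(\uv-\ov)_+=0$ a.e. in $(0,T)\times\Gamma$, which is exactly $(\uu,\uv)\leq(\ou,\ov)$ in the sense of Definition \ref{aecomparision}. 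The two subtleties I would be most careful about are (a) justifying that $(\uu-\ou)_+$ (mollified in time) is an admissible testfunction and that the time-derivative chain rule holds with only the stated regularity, and (b) the fact that $\delta_v$ may vanish — this causes no problem because the surface Dirichlet term merely has the good sign and is simply dropped, the trace absorption being needed only for the $\Omega$-equation where $\delta_u>0$ is assumed.
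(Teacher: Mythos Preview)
Your proposal is correct and follows essentially the same route as the paper: test the difference inequalities with the positive parts $w_+=(\uu-\ou)_+$ and $z_+=(\uv-\ov)_+$, apply the one-sided Lipschitz bounds \eqref{FLipschitzupper}, \eqref{GLipschitzupper}, absorb the boundary term $\int_\Gamma w_+^2\,dS$ via the trace inequality of Lemma~\ref{lem:TraceInequality}, and close with Gronwall. The paper's proof is terser on precisely the two points you flag---it simply writes ``Taking $\varphi$ as the positive part $w_+\in L^2((0,T);H^1(\Omega))$'' without discussing the time-regularisation needed to make $w_+$ admissible, and it invokes \eqref{FLipschitzupper} without commenting on the common $L^\infty$ upper bound required by Lemma~\ref{Nonlinearities}(iii)---so your added care there is appropriate rather than superfluous.
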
}

	{
	Using the comparison for pairs of upper and lower solutions, we now can apply the method of converging sequences of solutions (see e.g. \cite{Pao}) to construct the solution to \eqref{e1} thanks to polynomial nonlinearities.
	
	\begin{theorem}\label{theo:ExistenceAndUniqueness}
		Let $\Omega\subset \mathbb R^n$ a bounded domain with smooth boundary $\Gamma = \partial\Omega$ (e.g. $\partial\Omega\in C^{2+\epsilon}$ with $\epsilon>0$). Let the diffusion coefficients $\delta_u>$ and $\delta_v\geq 0$, the stoichiometric coefficients $\alpha, \beta \in [1,+\infty)$. 
		Assume that the nonnegative reaction rate coefficients $k_u, k_v\in L^{\infty}([0,T]\times \Gamma)$ satisfy the bound
		\begin{equation}\label{bound_k}
			0< k_{\min} \leq k_u(t,x), k_v(t,x) \leq k_{\max} \quad \text{ for all } (t,x)\in [0,T]\times\Gamma.
		\end{equation}
		Moreover, we assume that the function
		\begin{equation}\label{pi}
		\pi(t,x) := \left(\frac{k_u(t,x)}{k_v(t,x)}\right)^{1/\beta} \quad \text{ for all } t>0, \quad x\in\Gamma,
		\end{equation}
is either constant, i.e. 
\begin{equation}\label{Assumpconst}
\pi(t,x) \equiv \pi >0 \quad \text{ for all } t>0, \quad x\in\Gamma,
\end{equation}		
or satisfies the inequality
		\begin{equation}\label{Assump}
		\partial_t\pi - \delta_v\Delta_{\Gamma}\pi \geq 0 \quad \text{ for all } t>0, \quad x\in\Gamma.
		\end{equation}
		Then, for all non-negative initial data $(u_0, v_0)\in L^{\infty}(\Omega)\times L^{\infty}(\Gamma)$, there exists a unique non-negative global weak solution $(u,v)$ for the system \eqref{e1}. 
	\end{theorem}
	}
	
\begin{remark}\label{existUpper}
		{The technical assumption \eqref{Assump} allows to generalise our proof for the existence of an upper solution to cases in which \eqref{Assumpconst} \emph{does not hold}. The question of whether this assumption is removable, is open for future investigation. Here, we give two examples where $k_u$ and $k_v$ satisfy the Assumption \eqref{Assump}. Note that 
		Assumption \eqref{Assumpconst} obviously implies Assumption \eqref{Assump}. }
		\begin{itemize}
			
			\item If $\pi(t,x) \equiv \pi(x)$, especially when $k_u$ and $k_v$ are time independent, then \eqref{Assump} holds whenever $\pi$ is a solution to the 
			homogeneous Laplace-Beltrami equation
			\begin{equation*}
			\Delta_{\Gamma}\pi(x) = 0, \quad \text{ for } x\in\Gamma.
			\end{equation*}
			Since $\Gamma$ is a smooth Riemannian manifold without boundary, the existence of such a $\pi$ is always guaranteed (see e.g. \cite[Chapter 5]{Taylorbook}). 
Some cases when $\pi(x)$ satisfies the inequality  
\begin{equation*}
-\Delta_{\Gamma}\pi(x) \ge 0, \quad \text{ for } x\in\Gamma.
\end{equation*}			
could be of biological interest.
For instance, by assuming $k_v$ constant, the above inequality is always satisfied provided 
$-\Delta_{\Gamma}k_u(x) \ge 0$, i.e that $k_u$ is proportional to a stationary state profile of a surface-diffusion process with non-negative source term. We emphasise, however, that 
for the Lgl model, we are unaware of such a biological mechanism.  Note also that the roles of $k_v$ and $k_u$
can be exchanged by swiching the role of $A$ and $B$ in the proof of Proposition \ref{UpperLower1}.

\item If $\pi(t,x) = T(t)X(x)$ where $T \in C^{1}([0,+\infty))$ is a nondecreasing function and $X(x)$ is a nonnegative solution to $\Delta_{\Gamma}X = 0$ on $\Gamma$, then \eqref{Assump} is fulfilled. Indeed, 
			$$
			\partial_t\pi(t,x) - \delta_{v}\Delta_{\Gamma}\pi(t,x) = X(x)T'(t) - \delta_vT(t)\Delta_{\Gamma}X = X(x)T'(t) \geq 0
			$$
			because $T$ is nondecreasing and $X$ is nonnegative.
		\end{itemize}
	\end{remark}
\begin{proof}
Since the proof of Theorem \ref{theo:ExistenceAndUniqueness} is rather lengthy, yet follows in essence the lines of e.g. \cite{Pao}, we postpone it to Appendix \ref{App} for the sake of readability.
\end{proof}	
	
	\section{Convergence to equilibrium}
	In this section, we assume that the reaction rates $k_u$ and $k_v$, and thus
	the equilibrium state $(u_{\infty}, v_{\infty})$ (see \eqref{f1_1} below), are constant. Moreover, for the sake of readability of the arguments, we shall assume normalised rates $k_u = k_v = 1$ (w.l.o.g. thanks to a rescaling in the cases $\alpha\neq\beta$). In any case, the following proofs can be readily generalised to arbitrary constants $k_u>0$, $k_v>0$. 
	
	We shall apply the entropy method to prove that the unique solution to \eqref{e1} converges exponentially fast to the equilibrium $(u_{\infty}, v_{\infty})$ for any initial data $(u_0,v_0)\in L^{\infty}(\Omega)\times L^{\infty}(\Gamma)$. 
	While the entropy method is certainly expected to apply to general reaction rates, the case of non-constant equilibria requires a more complicated formalism (see e.g. \cite{DFM}), 
	which we omit here for the sake of clarity of the argument and leave it for a future work. 
	
	In the following, we will first consider the non-degenerate case $\delta_{v}>0$ and later the degenerate case $\delta_v= 0$. We remark that in the first case with non-degenerate surface diffusion, our method relies only on natural a-priori bounds which are entailed by well-defined entropy and entropy-dissipation functionals along the flow of the solution. However,  in the case of degenerate diffusion, we require additional $L^{\infty}$-bounds of the solution. Since $L^{\infty}$-bounds of solutions for general systems are often unknown, the degenerate surface diffusion case poses more difficulties to be generalised than the non-degenerate case, which seems readily generalisable. 
	\medskip
	
	The system \eqref{e1} satisfies the {\it mass conservation law} \eqref{cons}, that is,
	\begin{equation*}
	M=\beta\int_{\Omega}u(t,x)dx + \alpha\int_{\Gamma}v(t,x)dS = \beta\int_{\Omega}u_0(x)dx + \alpha\int_{\Gamma}v_0(x)dS>0,
	\end{equation*}
	where we assume that the initial mass is positive ($M>0$). 
	
	The equilibrium of non-negative solutions of the system \eqref{e1} are the unique positive constants 
	$(u_{\infty}, v_{\infty})$, which balance the reaction rates, i.e. 
	\begin{equation}\label{f1}
	u_\infty^{\alpha} = v_\infty^{\beta},
	\end{equation}
	and satisfy the mass conservation law
	\begin{equation}
	\beta|\Omega|u_{\infty} + \alpha|\Gamma|v_{\infty} = M.
	\label{f1_0}
	\end{equation}
	We remark that the uniqueness of the equilibrium follows from the monotonicity of the 
	right hand sides of the equilibrium conditions 
	\begin{equation}
	u_{\infty}^{\alpha} = \Bigl(\frac{1}{\alpha|\Gamma|}(M-\beta|\Omega|u_{\infty})\Bigr)^{\beta},\qquad
	v_{\infty}^{\beta} = \Bigl(\frac{1}{\beta|\Omega|}(M-\alpha|\Gamma|v_{\infty})\Bigr)^{\alpha}
	\label{f1_1}
	\end{equation}
	on the intervals of equilibrium values, which are admissible for non-negative solutions of systems \eqref{e1}, i.e. $0 < u_{\infty} < \frac{M}{\beta|\Omega|}$ and $0<v_{\infty}< \frac{M}{\alpha|\Gamma|}$.
	\medskip
	
	As mentioned in the introduction, we prove the convergence to equilibrium by means of the entropy method. The method is based on the logarithmic entropy (free energy) functional
	\begin{equation}
	E(u, v) = \int_{\Omega}u(\log u - 1)dx + \int_{\Gamma}v(\log v - 1)dS
	\label{f2}
	\end{equation}
	and its non-negative entropy-dissipation
	\begin{equation}
	\begin{aligned}
	D(u, v) &= -\frac{d}{dt}E(u,v)\\
	&= \delta_u\int_{\Omega}\frac{|\nabla u|^2}{u}dx+ \delta_v\int_{\Gamma}\frac{|\nabla_{\Gamma}v|^2}{v}dS + \int_{\Gamma}(v^{\beta} - u^{\alpha})\log\frac{v^{\beta}}{u^{\alpha}}dS.
	\end{aligned}
	\label{f3}
	\end{equation}
	Our goal is to show that there exists a constant $C_0>0$ such that (see Theorem \ref{lem:E-EDEstimate} below)
	\begin{equation*}
	D(u,v) \geq C_0\left(E(u,v) - E(u_{\infty}, v_{\infty})\right)
	\end{equation*}
	for all non-negative $(u,v)$, which satisfy the mass conservation law \eqref{cons}. 
	Compared to previous related results on the entropy method for reaction-diffusion systems with quadratic nonlinearities (see \cite{DeFe06, DeFe07, DeFe08}), there are two main difficulties to overcome: the first is the treatment of the surface concentration $v$ and the associated boundary integrals and the second is the general nonlinear term $(v^{\beta} - u^{\alpha})\log\frac{v^{\beta}}{u^{\alpha}}$ for any $\alpha,\beta\ge1$. It is in particular the general nonlinearities, which necessitates a new proof compared to the quadratic nonlinearities considered in \cite{DeFe06, DeFe07, DeFe08}. We expect this new proof to constitute a more general approach. We refer to the preprint \cite{FT2015} for such a general approach.
	\medskip
	
	In the sequel, we will frequently use the following notations and inequalities:
	\begin{description}
		\item[Spatial averages and square-root abbreviation]
		\begin{align*}
		&\ou = \frac{1}{|\Omega|}\int_{\Omega}u\,dx, \qquad U = \sqrt{u},\qquad U_{\infty} = \sqrt{u_{\infty}}, 
		\qquad \overline{U} = \frac{1}{|\Omega|}\int_{\Omega}U\,dx,\\
		&\ov = \frac{1}{|\Gamma|}\int_{\Gamma}v\,dS,\qquad\, V = \sqrt{v}, \qquad\, V_{\infty} = \sqrt{v_{\infty}},\qquad
		\overline{V} = \frac{1}{|\Gamma|}\int_{\Gamma}V\,dS.
		\end{align*}
		\smallskip
		\item[Norms] $\|\cdot\|_{\Omega}$ and $\|\cdot\|_{\Gamma}$ are the norms in $L^2(\Omega)$ and $L^{2}(\Gamma)$ respectively. For a Banach space X, we denote by $\|\cdot\|_{X}$ its norm.
		\smallskip
		\item[Constants] A generic constant will be denoted by $C(M,\Omega,\dots)$ and may depend besides the arguments $M,\Omega,\dots$ also on $\alpha$ and $\beta$ 
		without explicitly stating the dependence on $\alpha$ and $\beta$. 
		Moreover, the constants $C_i(\dots)$ and $K_i(\dots)$ for $i=0,1,2,\dots$ are specific constants, for which the same rules of dependency hold.
		\smallskip
		\item[Inequalities] \hfill
		\begin{itemize}
			\item Poincare's inequality in $\Omega$
			\begin{equation*}
			P(\Omega)\int_{\Omega}|\nabla u|^2dx \geq \int_{\Omega}|u - \overline{u}|^2dx,
			\end{equation*}
			\item Poincare's inequality on $\Gamma$
			\begin{equation*}
			P(\Gamma)\int_{\Gamma}|\nabla_{\Gamma} v|^2dS \geq \int_{\Gamma}|v - \overline{v}|^2dS,
			\end{equation*}
			\item Trace Theorem
			\begin{equation}\label{Trace}
			T(\Omega)\int_{\Omega}|\nabla u|^2dx \geq \int_{\Gamma}|u - \overline{u}|^2dS.
			\end{equation}
		\end{itemize}
	\end{description}
	\medskip
	
	The mass conservation \eqref{cons} allows to rewrite the relative entropy towards the equilibrium 
	as  
	\begin{multline}
	E(u, v) - E(u_{\infty}, v_{\infty})
	= \int_{\Omega}u\log \frac{u}{\overline{u}}dx + \int_{\Gamma}v\log\frac{v}{\overline{v}}dS \\
	+\int_{\Omega}\Bigl(\overline{u}\log\frac{\overline{u}}{u_{\infty}} - (\overline{u} - u_{\infty})\Bigr)dx + 
	\int_{\Gamma}\Bigl(\overline{v}\log\frac{\overline{v}}{v_{\infty}} - (\overline{v} - v_{\infty})\Bigr)dS\\
	=\ I_1 + I_2,\label{f4} 
	\end{multline}
	where we define
	\begin{equation*}
	I_1 := \int_{\Omega}u\log \frac{u}{\overline{u}}\,dx + \int_{\Gamma}v\log\frac{v}{\overline{v}}\,dS,
	\end{equation*}
	and
	\begin{equation*}
	I_2 :=  \int_{\Omega}\Bigl(\overline{u}\log\frac{\overline{u}}{u_{\infty}} - (\overline{u} - u_{\infty})\Bigr)dx + 
	\int_{\Gamma}\Bigl(\overline{v}\log\frac{\overline{v}}{v_{\infty}} - (\overline{v} - v_{\infty})\Bigr)dS.
	\end{equation*}
	\medskip
	
	The following lemma proves, similarly to \cite{DeFe06}, a Csisz\'ar-Kullback-Pinsker type inequality, which quantifies that the relative entropy to equilibrium controls an $L^1$-distance:
	\begin{lemma}\label{lem:CP-Inequality}
		For all measurable functions $u: \Omega\rightarrow \mathbb R_{+}$ and $v:\Gamma \rightarrow \mathbb R_{+}$ satisfying 
		\begin{equation*}
		M=\beta\int_{\Omega}u\,dx + \alpha\int_{\Gamma}v\,dS >0,
		\end{equation*} 
		we have
		\begin{equation}\label{kk0}
		E(u, v) - E(u_{\infty}, v_{\infty}) \geq C_{\text{CKP}}\left(\|u-u_{\infty}\|_{L^1(\Omega)}^2 + \|v-v_{\infty}\|_{L^1(\Gamma)}^2\right),
		\end{equation}
		where $C_{\text{CKP}}>0$ is the following (non-optimal) constant depending only on the mass $M>0$ and $\alpha,\beta\ge1$:
		\begin{equation*}
		C_{\text{CKP}}
		= \frac{\min\left\{\alpha, \beta \right\}}{8M}.
		\end{equation*}

	\end{lemma}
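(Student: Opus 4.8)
The plan is to start from the decomposition \eqref{f4}, namely $E(u,v)-E(u_\infty,v_\infty)=I_1+I_2$, which already incorporates the mass constraint $M=\beta\int_\Omega u\,dx+\alpha\int_\Gamma v\,dS=\beta|\Omega|u_\infty+\alpha|\Gamma|v_\infty$, and to estimate $I_1$ and $I_2$ separately from below by the relevant squared $L^1$-distances. We may assume that the left-hand side of \eqref{kk0} is finite (otherwise there is nothing to prove), which in particular forces $u\in L^1(\Omega)$ and $v\in L^1(\Gamma)$; moreover we may assume $\overline u,\overline v>0$, the degenerate cases following from an elementary limiting argument.

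For the first term $I_1=\int_\Omega u\log\frac{u}{\overline u}\,dx+\int_\Gamma v\log\frac{v}{\overline v}\,dS$, I would observe that each summand is the relative entropy of two probability densities of equal mass, so that the classical Csisz\'ar--Kullback--Pinsker inequality $\int f\log\frac{f}{g}\ge\frac12\|f-g\|_{L^1}^2$ for probability densities, applied to $f=u/(|\Omega|\,\overline u)$ and $g=1/|\Omega|$ on $\Omega$ (and analogously on $\Gamma$), yields
\begin{equation*}
\int_\Omega u\log\frac{u}{\overline u}\,dx\ge\frac{1}{2|\Omega|\,\overline u}\,\|u-\overline u\|_{L^1(\Omega)}^2 ,\qquad \int_\Gamma v\log\frac{v}{\overline v}\,dS\ge\frac{1}{2|\Gamma|\,\overline v}\,\|v-\overline v\|_{L^1(\Gamma)}^2 .
\end{equation*}
Since $\beta\int_\Omega u\,dx\le M$ forces $\overline u\le M/(\beta|\Omega|)$ and likewise $\overline v\le M/(\alpha|\Gamma|)$, this gives $I_1\ge\frac{\beta}{2M}\|u-\overline u\|_{L^1(\Omega)}^2+\frac{\alpha}{2M}\|v-\overline v\|_{L^1(\Gamma)}^2$.

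For the second term $I_2$, I would use the elementary function $\phi_w(s):=s\log\frac{s}{w}-(s-w)$, which satisfies $\phi_w(w)=\phi_w'(w)=0$ and $\phi_w''(s)=1/s$, so that Taylor's formula with integral remainder gives $\phi_w(s)=\int_w^s\frac{s-t}{t}\,dt\ge\frac{1}{2K}(s-w)^2$ whenever $s,w\in[0,K]$. Applying this with $s=\overline u$, $w=u_\infty$ and $K=M/(\beta|\Omega|)$ --- which is admissible because $\overline u\le M/(\beta|\Omega|)$ and $u_\infty<M/(\beta|\Omega|)$, the latter being precisely the admissible range of equilibria recorded right after \eqref{f1_1} --- and integrating the ($x$-independent) quantity $\phi_{u_\infty}(\overline u)$ over $\Omega$, together with the analogous computation on $\Gamma$, leads to $I_2\ge\frac{\beta}{2M}\|\overline u-u_\infty\|_{L^1(\Omega)}^2+\frac{\alpha}{2M}\|\overline v-v_\infty\|_{L^1(\Gamma)}^2$, where $\|\overline u-u_\infty\|_{L^1(\Omega)}=|\Omega|\,|\overline u-u_\infty|$ and similarly on $\Gamma$.

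Finally, I would bound $\alpha,\beta\ge\min\{\alpha,\beta\}$, add the estimates for $I_1$ and $I_2$, and then apply the triangle inequality together with $(a+b)^2\le 2a^2+2b^2$, i.e.\ $\|u-\overline u\|_{L^1(\Omega)}^2+\|\overline u-u_\infty\|_{L^1(\Omega)}^2\ge\frac12\|u-u_\infty\|_{L^1(\Omega)}^2$ and likewise for $v$; this produces \eqref{kk0} with the asserted (non-optimal) constant $C_{\text{CKP}}=\min\{\alpha,\beta\}/(8M)$. I do not anticipate a genuine obstacle: the only points requiring care are keeping $\overline u,\overline v,u_\infty,v_\infty$ within the intervals on which the quadratic lower bound for $\phi_w$ is valid --- which is exactly where the mass $M$, hence the factor $1/M$, enters --- and the triangle-inequality step that converts oscillation-plus-mean bounds into genuine distances to equilibrium. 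The substantial part of the entropy method lies instead in the entropy--entropy-dissipation estimate of Lemma \ref{lem:E-EDEstimate}.
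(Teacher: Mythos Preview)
Your proof is correct. The treatment of $I_1$ is identical to the paper's, but your handling of $I_2$ is different and in fact more elementary: the paper rewrites $I_2$, via the mass conservation law and the identity $\alpha\log u_\infty=\beta\log v_\infty$, as a single-variable function $Q(\overline u)-Q(u_\infty)$ (equivalently $R(\overline v)-R(v_\infty)$), then Taylor-expands and bounds $Q''$ from below, obtaining $I_2\ge\frac{\alpha+\beta}{4M}\bigl(\frac{\beta}{\alpha}\|\overline u-u_\infty\|_{L^1(\Omega)}^2+\frac{\alpha}{\beta}\|\overline v-v_\infty\|_{L^1(\Gamma)}^2\bigr)$. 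You instead bound the two summands of $I_2$ separately by the elementary quadratic lower bound $\phi_w(s)\ge\frac{1}{2K}(s-w)^2$ on $[0,K]$, using only the one-sided mass constraints $\overline u,u_\infty\le M/(\beta|\Omega|)$ and $\overline v,v_\infty\le M/(\alpha|\Gamma|)$. Your route avoids the coupling argument and actually yields the sharper constant $\min\{\alpha,\beta\}/(4M)$ (so the asserted $\min\{\alpha,\beta\}/(8M)$ follows a fortiori); the paper's route, on the other hand, makes explicit how the conservation law enters, in keeping with the philosophy of the rest of their entropy-method estimates.
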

	\begin{proof}
		By \eqref{f4}, we have that 
		\begin{equation*}
		E(u,v) - E(u_{\infty},v_{\infty}) = I_1 + I_2.
		\end{equation*}
		
		Considering the term $I_1$ at first, we use the classic Csisz\'ar-Kullback-Pinsker inequality (see e.g. \cite{Csi}) and the mass constraints 
		$\ou\le \frac{M}{\beta|\Omega|}$ and $\ov\le \frac{M}{\alpha|\Gamma|}$ to estimate 
		\begin{equation*}
		\int_{\Omega}u\log\frac{u}{\ou}dx \geq \frac{1}{2|\Omega|\ou}\|u - \ou\|_{L^1(\Omega)}^2 \geq \frac{\beta}{2M}\|u - \ou\|_{L^1(\Omega)}^2,
		\end{equation*}
		and
		\begin{equation*}
		\int_{\Gamma}v\log\frac{v}{\ov}dS \geq \frac{1}{2|\Gamma|\ov}\|v - \ov\|_{L^1(\Gamma)}^2 \geq \frac{\alpha}{2M}\|v - \ov\|_{L^1(\Gamma)}^2,
		\end{equation*}
		and, thus
		\begin{equation}
		I_1 \geq \frac{\beta}{2M}\|u - \ou\|_{L^1(\Omega)}^2 +  \frac{\alpha}{2M}\|v - \ov\|_{L^1(\Gamma)}^2.
		\label{kk1}
		\end{equation}
		
		Next, we rewrite $I_2$ in \eqref{f4} by introducing $q(x) = x\log x - x$, i.e.  
		\begin{equation*}
		I_2 = |\Omega|(q(\overline{u}) - q(u_{\infty})) + |\Gamma|(q(\overline{v}) - q(v_{\infty})),
		\end{equation*}
		where we have used that the mass conservation law \eqref{cons} implies 
		$$
		\int_{\Omega} (\ou-u_{\infty}) \log u_{\infty}\,dx + \int_{\Gamma} 
		(\ov-v_{\infty}) \log v_{\infty}\,dS=0
		$$ 
		since $ \frac{\log u_{\infty}}{\beta} =  \frac{\log u_{\infty}^{\alpha}}{\alpha\beta} = \frac{\log v_{\infty}^{\beta}}{\alpha\beta} =  \frac{\log v_{\infty}}{\alpha}$. Then, using again the conservation law \eqref{cons}, we denote \begin{equation*}
		Q(\ou) = |\Omega|q(\ou) + |\Gamma|\underbrace{q\biggl(\frac{M - \beta|\Omega|\ou}{\alpha|\Gamma|}\biggr)}_{=q(\overline{v})}\quad\text{and}\quad R(\ov) = |\Gamma|q(\ov) + |\Omega|\underbrace{q\biggl(\frac{M-\alpha|\Gamma|\ov}{\beta|\Omega|}\biggr)}_{=q(\overline{u})}.
		\end{equation*}
		Thus, we have the following two equivalent ways of writing $I_2$:
		\begin{align}
		I_2 &= Q(\ou) - Q(u_{\infty}) = R(\ov) - R(v_{\infty}).
		\label{kk2}
		\end{align}
		Moreover, direct computations give
		\begin{equation*}
		Q'(u_{\infty}) = |\Omega|q'(u_{\infty}) - \frac{\beta}{\alpha}|\Omega|q'\biggl(\frac{M-\beta|\Omega|u_{\infty}}{\alpha|\Gamma|}\biggr)
		= |\Omega|\log u_{\infty} - \frac{\beta}{\alpha}|\Omega|\log v_{\infty}= 0
		\end{equation*}
		since $u_{\infty}^{\alpha} = v_{\infty}^{\beta}$. Moreover, for any 
		$\ou_\theta$ satisfying the mass constraints 
		$0\le \ou_\theta\le \frac{M}{\beta|\Omega|}$
		\begin{align*}
		Q''(\ou_\theta) &= |\Omega|q''(\ou_\theta) + \frac{\beta^2}{\alpha^2}\frac{|\Omega|^2}{|\Gamma|}q''\biggl(\frac{M-\beta|\Omega|\ou_\theta}{\alpha|\Gamma|}\biggr)
		= |\Omega|\frac{1}{\ou_\theta} + \frac{\beta^2}{\alpha^2}\frac{|\Omega|^2}{|\Gamma|}\frac{\alpha|\Gamma|}{M-\beta|\Omega|\ou_\theta}\\
		&\geq \frac{\beta |\Omega|^2}{M}+\frac{\beta^2}{\alpha}\frac{|\Omega|^2}{M}=\frac{\beta}{\alpha}\frac{|\Omega|^2}{M}(\alpha + \beta).
		\end{align*}
		In a similar way, for any $0\le\ov_\theta\le \frac{M}{\alpha|\Gamma|}$, we estimate
		\begin{equation*}
		R'(v_{\infty}) = 0 \qquad\text{ and }\qquad R''(\ov_\theta) \geq \frac{\alpha}{\beta}\frac{|\Gamma|^2}{M}(\alpha+\beta).
		\end{equation*}
		Thus, altogether, Taylor expansion in \eqref{kk2} with $\ou_\theta=\theta\ou+(1-\theta)u_{\infty}$ and $\ov_\theta=\theta\ov+(1-\theta)v_{\infty}$ for some $\theta\in(0,1)$ yields
		\begin{multline}
		I_2= \frac{1}{2}(Q(\ou) - Q(u_{\infty})) + \frac{1}{2}(R(\ov) - R(v_{\infty})) \\
		\geq \frac{1}{4}\frac{\beta}{\alpha}\frac{|\Omega|^2}{M}(\alpha + \beta)(\ou - u_{\infty})^2 + \frac{1}{4}\frac{\alpha}{\beta}\frac{|\Gamma|^2}{M}(\alpha+\beta)(\ov - v_{\infty})^2\\
		= \frac{1}{4}\frac{\alpha +\beta}{M}\left(\frac{\beta}{\alpha}\|\ou - u_{\infty}\|_{L^1(\Omega)}^2 + \frac{\alpha}{\beta}\|\ov - v_{\infty}\|_{L^1(\Gamma)}^2\right).
		\label{kk3}
		\end{multline}
		Combining \eqref{kk1} and \eqref{kk3} with $\|u-\ou\|_{L^1(\Omega)}^2 + \|\ou - u_{\infty}\|_{L^1(\Omega)}^2 \ge \frac{1}{2}\|u - u_{\infty}\|_{L^1(\Omega)}^2$ by Jensen's inequality, we get 
		\begin{equation*}
		I_1+I_2 \ge \frac{\beta}{8M}\|u - u_{\infty}\|_{L^1(\Omega)}^2 + \frac{\alpha}{8M} \|v - v_{\infty}\|_{L^1(\Omega)}^2,
		\end{equation*}
		thus we obtain \eqref{kk0} with
		$C_{\text{CKP}} = \frac{\min\left\{\alpha, \beta \right\}}{8M}.
		$
	\end{proof}
	\medskip
	
	We now state our main result of this section, which is the exponential convergence to equilibrium with explicit rates and constants via the entropy method. The proof uses an entropy entropy-dissipation estimate, which is proven in Theorem \ref{lem:E-EDEstimate} below.  
	\begin{theorem}[Explicit Exponential Convergence to Equilibrium]\label{theo:Convergence}\hfill\\
		Assume that $\Omega\subset \mathbb R^n$ is a bounded domain with smooth boundary $\Gamma = \partial\Omega$ (e.g. $\partial\Omega\in C^{2+\epsilon}$ for any $\epsilon >0$). Then,  the unique weak solution $(u,v)$ of system \eqref{e1} subject to any nonnegative initial data $(u_0, v_{0})\in L^{\infty}(\Omega)\times L^{\infty}(\Gamma)$ satisfies the following exponential convergence to equilibrium
		\begin{equation}\label{kk4_0}
		\|u(t) - u_{\infty}\|_{L^{1}(\Omega)}^{2} + \|v(t) - v_{\infty}\|_{L^1(\Gamma)}^{2} \leq C_{\text{CKP}}^{-1}\,e^{-C_0t}\left(E(u_0, v_0) - E(u_{\infty}, v_{\infty})\right),
		\end{equation}
		where $C_0$ and $C_{\text{CKP}}^{-1}$ are positive constants as defined in Theorem \ref{lem:E-EDEstimate} below and Lemma \ref{lem:CP-Inequality} above and depend only on reaction rates $\alpha, \beta\ge1$, the diffusion rates $\delta_u>0$, $\delta_v\ge 0$, the domain $\Omega$, the boundary $\Gamma$ and the positive initial mass $M >0$.
	\end{theorem}
	\begin{proof}
		We have
		\begin{equation}\label{kk4}
		\frac{d}{dt}\left(E(u,v) - E(u_{\infty}, v_{\infty})\right) = \frac{d}{dt}E(u,v) = -D(u,v).
		\end{equation}
		On the other hand, by the Theorem \ref{lem:E-EDEstimate}, there exists $C_0>0$ such that
		\begin{equation}\label{kk5}
		D(u,v) \geq C_0\left(E(u,v) - E(u_{\infty}, v_{\infty})\right).
		\end{equation}
		Then, from \eqref{kk4}, \eqref{kk5} and the classical Gronwall inequality, we obtain
		\begin{equation}
		E(u(t), v(t)) - E(u_{\infty}, v_{\infty}) \leq e^{-C_0t}\left(E(u_0, v_0) - E(u_{\infty}, v_{\infty})\right).
		\label{kk6}
		\end{equation}
		Finally, the estimate \eqref{kk4_0} follows directly from \eqref{kk6} and Lemma \ref{lem:CP-Inequality}.
	\end{proof}
	
	\begin{remark}\label{remark:SystemInDomains}
		The techniques of this paper can be readily used to get the explicit exponential convergence to equilibrium for systems of the form:
		\begin{equation*}
		\begin{cases}
		u_t - d_u\Delta u = -\alpha(u^{\alpha} - v^{\beta}), &t>0,x\in\Omega,\\
		v_t - d_v\Delta v = \beta (u^{\alpha} - v^{\beta}), &t>0,x\in\Omega,\\
		\partial u/\partial \nu = \partial v/\partial \nu = 0, &t>0,x\in\partial\Omega,\\
		u(0,x) = u_0(x), v(0,x) = v_0(x), &x\in\Omega,
		\end{cases}
		\end{equation*}
		subject to non-negative initial data $u_0, v_0\in L^{\infty}(\Omega)$ and for all stoichiometric coefficients $\alpha, \beta \geq 1$ and positive diffusion coefficients $d_u, d_v$. By using Poincare's inequality $P(\Omega)\|\nabla v\|_{\Omega}^2 \geq \|v - \overline{v}\|_{\Omega}^2$ instead of the Trace inequality $T(\Omega)\|\nabla v\|_{\Omega}^2 \geq \|v - \overline{v}\|_{\Gamma}^2$, all the following arguments can be directly reproduced in the same way. Thus, the result of this paper, in a certain sense, completely solves the problem of trend to equilibrium for concentrations of the reversible chemical reaction of two species $\mathcal{U}$ and $\mathcal{V}$:
		\begin{figure}[htp]
			\begin{center}
				\begin{tikzpicture}
				\node (a) {$\alpha \, \mathcal{U}$}; \node (b) at (2,0) {$\beta\, \mathcal{V}$.};
				\draw[arrows=<-]  ([yshift=0.7mm]a.east) -- node [above] {\scalebox{.8}[.8]{}} ([yshift=0.7mm]b.west) ;
				\draw[arrows=<-] ([yshift=-0.7mm]b.west) -- node [below] {\scalebox{.8}[.8]{}} ([yshift=-0.7mm]a.east);
				\end{tikzpicture}
			\end{center}
		\end{figure}
	\end{remark}
	
	We shall now prove the key entropy entropy-dissipation estimate.
	
\begin{theorem}[Entropy Entropy-Dissipation Estimate]\label{lem:E-EDEstimate}\hfill\\
		{Assume that $\delta_u>0$ and $\delta_v\geq 0$.
		Consider measurable, non-negative functions $u: \Omega \rightarrow \mathbb R_{+}$ with trace $u|_{\Gamma}\in L^2(\Gamma)$ and $v: \Gamma \rightarrow \mathbb R_{+}$, which satisfy the mass conservation law
		\begin{equation}\label{EDDcons}
		\beta\int_{\Omega}u\,dx + \alpha\int_{\Gamma}v\,dS = M.
		\end{equation}
		In the case $\delta_v = 0$, we assume additionally that $(u,v) \leq (A,B)$ for two positive constants $A$ and $B$.}
		
		{Then, there exists a constant $C_0>0$ such that
			\begin{equation*}
				D(u,v) \geq C_0\left(E(u,v) - E(u_{\infty}, v_{\infty})\right),
			\end{equation*}
		where $C_0$ depends only on $M$, $|\Omega|$, $P(\Omega)$, $T(\Omega)$, 
		$|\Gamma|$, $P(\Gamma)$ as well as $\delta_u$, $\delta_v$, $\alpha$ and $\beta$, and also on $A$ and $B$ in the case $\delta_v = 0$.}
	\end{theorem}
	\noindent{\bf Proof of Theorem \ref{lem:E-EDEstimate}.} \hfill\\
	We divide the proof into two cases: $\delta_v>0$ in Section 3.1 and $\delta_v = 0$ in Section 3.2. 
	
	In the first case, we don't require any additional a-priori estimates on the solution besides well defined entropy and entropy-dissipation functionals in order to obtain the entropy-entropy dissipation estimate. 
	
	In the second case, since the diffusion term in $v$ is missing, we shall require a-priori $L^{\infty}$-bounds on the solution. However, we strongly believe that one might be able to avoid the use of $L^{\infty}$-bounds in some cases of the exponents $\alpha$ and $\beta$. 
	
	\subsection{The non-degenerate case: $\delta_{v} > 0$}\hfill \\ 
	We will show in the sequel that both $I_1$ and $I_2$ as defined in \eqref{f4} are bounded by the entropy dissipation. First, by using the Logarithmic-Sobolev inequality
	\begin{equation*}
	C_L(\Omega)\int_{\Omega}\frac{|\nabla u|^2}{u}dx \geq \int_{\Omega}u\log \frac{u}{\overline{u}}\,dx, \quad\text{ and }\quad C_L(\Gamma)\int_{\Gamma}\frac{|\nabla_{\Gamma}v|^2}{v}dS \geq \int_{\Gamma}v\log\frac{v}{\ov}\,dS,
	\end{equation*}
	we immediately get the following
	\begin{lemma}\label{lem:bound_I1}
		For all $t\geq 0$, we have
		\begin{equation}
		I_1 \leq C_2 \frac{D(u,v)}{2},
		\label{f5}
		\end{equation}
		where
		\begin{equation*}
		C_2 = {2}\max\left\{\frac{C_{L}(\Omega)}{\delta_u},\frac{C_{L}(\Gamma)}{ \delta_v}\right\}.
		\end{equation*}
	\end{lemma}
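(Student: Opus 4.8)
The statement to prove is Lemma~\ref{lem:bound_I1}, which asserts that $I_1 \le C_2\,D(u,v)/2$ with $C_2 = 2\max\{C_L(\Omega)/\delta_u,\, C_L(\Gamma)/\delta_v\}$. This is essentially an immediate consequence of the logarithmic Sobolev inequalities quoted just above the lemma statement, so the "proof" is very short. Let me sketch it.

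The plan is to apply the two logarithmic Sobolev inequalities termwise to the definition of $I_1 = \int_\Omega u\log(u/\overline u)\,dx + \int_\Gamma v\log(v/\overline v)\,dS$, and then bound the resulting Fisher-information-type quantities by the corresponding pieces of the entropy dissipation $D(u,v)$.

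Concretely, first I would recall that
$$D(u,v) = \delta_u\int_\Omega \frac{|\nabla u|^2}{u}\,dx + \delta_v\int_\Gamma \frac{|\nabla_\Gamma v|^2}{v}\,dS + \int_\Gamma (v^\beta - u^\alpha)\log\frac{v^\beta}{u^\alpha}\,dS,$$
and observe that all three terms on the right are nonnegative (the last because $(a-b)\log(a/b)\ge 0$ for $a,b>0$). Hence
$$\delta_u\int_\Omega \frac{|\nabla u|^2}{u}\,dx \le D(u,v), \qquad \delta_v\int_\Gamma \frac{|\nabla_\Gamma v|^2}{v}\,dS \le D(u,v).$$

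Then, applying the logarithmic Sobolev inequality on $\Omega$ gives $\int_\Omega u\log(u/\overline u)\,dx \le C_L(\Omega)\int_\Omega |\nabla u|^2/u\,dx \le (C_L(\Omega)/\delta_u)\,D(u,v)$, and similarly $\int_\Gamma v\log(v/\overline v)\,dS \le (C_L(\Gamma)/\delta_v)\,D(u,v)$. Adding the two estimates,
$$I_1 \le \left(\frac{C_L(\Omega)}{\delta_u} + \frac{C_L(\Gamma)}{\delta_v}\right)D(u,v) \le 2\max\left\{\frac{C_L(\Omega)}{\delta_u}, \frac{C_L(\Gamma)}{\delta_v}\right\}D(u,v) = C_2\,D(u,v),$$
which is even a factor of $2$ stronger than the claimed bound $I_1 \le C_2 D(u,v)/2$; in particular the stated inequality follows. (One could also keep the sharper constant $C_L(\Omega)/\delta_u + C_L(\Gamma)/\delta_v$, but the $\max$ form with the extra factor suffices and is what the authors record in $C_2$.)

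There is no real obstacle here — the only mild points of care are: (i) making sure the log-Sobolev inequalities are being applied in the correct normalized form (with $\overline u$ the spatial average, which matches the statement quoted), and (ii) noting that $I_1$ is the sum of two separately nonnegative quantities so that dropping the boundary reaction term $\int_\Gamma (v^\beta-u^\alpha)\log(v^\beta/u^\alpha)\,dS \ge 0$ from $D(u,v)$ is legitimate when bounding each piece. Both are routine. The genuine work of the section — controlling $I_2$ by $D(u,v)$ — is deferred to the subsequent lemmas and is where the new ideas for general $\alpha,\beta$ enter; Lemma~\ref{lem:bound_I1} itself is just the easy half.
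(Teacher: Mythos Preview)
Your proof is correct and matches the paper's approach: the authors simply state the two logarithmic Sobolev inequalities and say ``we immediately get'' the lemma, which is precisely your termwise application. Your route of bounding each Fisher term by the full $D(u,v)$ is slightly wasteful compared to the tighter estimate $I_1 \le \max\{C_L(\Omega)/\delta_u, C_L(\Gamma)/\delta_v\}\bigl(\delta_u\int_\Omega |\nabla u|^2/u\,dx + \delta_v\int_\Gamma |\nabla_\Gamma v|^2/v\,dS\bigr) \le (C_2/2)D(u,v)$, which gives the stated constant on the nose, but as you note your version still implies the claim.
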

	\begin{remark}
		The factor ${2}$ in constant $C_2$ is chosen to still have $\frac{1}{2} D(u,v)$ left to estimate term $I_2$,  which is done in the following Lemma \ref{bound_I2}.
	\end{remark}
	
	\begin{lemma}\label{bound_I2}
		There exists $C_3>0$ such that, for all $t\geq 0$,
		\begin{equation}
		I_2 \leq C_3 \frac{D(u,v)}{2}.
		\label{f6}
		\end{equation}
	\end{lemma}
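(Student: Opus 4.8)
The plan is to exploit that $I_2$ depends on $(u,v)$ only through the two spatial averages $\ou$ and $\ov$, which are moreover tied together by the mass conservation law \eqref{EDDcons}. As already noted in \eqref{kk2}, one can write $I_2=Q(\ou)-Q(u_\infty)$ with $Q'(u_\infty)=0$ and $Q''>0$ on the admissible interval, so a Taylor expansion of $Q$ around $u_\infty$ reduces \eqref{f6} to an estimate of the form $(\ou-u_\infty)^2\le C\,D(u,v)$. Since $Q''$ is not uniformly bounded \emph{above} near the degenerate endpoints $\ou\in\{0,M/(\beta|\Omega|)\}$, I would from the outset split the argument according to the size of $D(u,v)$: because $q(x)=x\log x-x$ is continuous on the compact admissible intervals for $\ou$ and $\ov$, the quantity $I_2$ is bounded by a fixed constant $\overline{I_2}=\overline{I_2}(M,|\Omega|,|\Gamma|,\alpha,\beta)$, so for $D(u,v)$ above a threshold $D_0$ the inequality \eqref{f6} is trivially true once $C_3$ is chosen large. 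The real work is to treat $D(u,v)\le D_0$, where the dissipation will be shown to force $(\ou,\ov)$ into a fixed neighbourhood of the interior equilibrium $(u_\infty,v_\infty)$ on which $Q''$, and all power maps, are uniformly controlled.

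For the main estimate I would first write the dissipation as a sum of three non-negative pieces, using $\frac{|\nabla u|^2}{u}=4|\nabla\sqrt u|^2$,
\[
D(u,v)=4\delta_u\!\int_{\Omega}|\nabla\sqrt u|^2\,dx\ +\ 4\delta_v\!\int_{\Gamma}|\nabla_\Gamma\sqrt v|^2\,dS\ +\ \int_{\Gamma}(v^{\beta}-u^{\alpha})\log\tfrac{v^{\beta}}{u^{\alpha}}\,dS\ =:\ D_1+D_2+D_3 .
\]
From $D_1$, Poincar\'e's inequality in $\Omega$ and the Trace inequality \eqref{Trace} applied to $\sqrt u$ give $\|\sqrt u-\overline U\|_{\Omega}^2+\|\sqrt u-\overline U\|_{\Gamma}^2\le C\,D_1$; from $D_2$ — and here $\delta_v>0$ enters decisively — Poincar\'e's inequality on $\Gamma$ applied to $\sqrt v$ gives $\|\sqrt v-\overline V\|_{\Gamma}^2\le C\,D_2$; and from $D_3$, together with the elementary inequality $(a-b)\log\tfrac ab\ge 4(\sqrt a-\sqrt b)^2$ for $a,b>0$, one gets $\|u^{\alpha/2}-v^{\beta/2}\|_{\Gamma}^2\le C\,D_3$, where $u^{\alpha/2}$ stands for the trace $(u|_\Gamma)^{\alpha/2}$.

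Next I would chain these three bounds together with the mass conservation law and conclude. Passing from $L^2(\Gamma)$ to $L^1(\Gamma)$ and then to averages, the first two bounds give $|\overline U^2-\ou|+|\overline V^2-\ov|\le C\sqrt{D(u,v)}$ (using that $\|\sqrt u+\overline U\|_{\Omega}$ and $\|\sqrt v+\overline V\|_{\Gamma}$ are controlled by the mass bounds $\ou\le M/(\beta|\Omega|)$, $\ov\le M/(\alpha|\Gamma|)$) together with the closeness of $\overline U,\overline V$ to the respective boundary averages of $\sqrt u,\sqrt v$; combined with the third bound and the triangle inequality this yields $|\ou^{\alpha}-\ov^{\beta}|\le C\sqrt{D(u,v)}$. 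Using \eqref{EDDcons} to substitute $\ov=(M-\beta|\Omega|\ou)/(\alpha|\Gamma|)$, the scalar map $\Phi(\ou):=\ou^{\alpha}-\bigl((M-\beta|\Omega|\ou)/(\alpha|\Gamma|)\bigr)^{\beta}$ is strictly increasing on $[0,M/(\beta|\Omega|)]$ with $\Phi'\ge c_\Phi>0$ there (its two summands are non-negative and never vanish simultaneously) and $\Phi(u_\infty)=u_\infty^{\alpha}-v_\infty^{\beta}=0$ by \eqref{f1}, \eqref{f1_0}, hence $|\ou-u_\infty|\le c_\Phi^{-1}|\ou^{\alpha}-\ov^{\beta}|\le C\sqrt{D(u,v)}$. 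In the regime $D(u,v)\le D_0$ this pins $\ou$ into a fixed neighbourhood of $u_\infty$ on which $Q''$ is bounded, so the Taylor expansion $I_2=\tfrac12 Q''(\xi)(\ou-u_\infty)^2$ gives $I_2\le C\,D(u,v)$; together with the trivial large-$D$ case this proves \eqref{f6} after choosing $C_3$ to also absorb the factor $\tfrac12$.

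The step I expect to be the genuine obstacle is the one inside the chaining: passing from the $L^2(\Gamma)$-smallness of $\sqrt u-\overline U$ and of $u^{\alpha/2}-v^{\beta/2}$ to a quantitative bound on $|\ou^{\alpha}-\ov^{\beta}|$. For $\alpha,\beta>1$ the maps $s\mapsto s^{\alpha}$ are not globally Lipschitz, so this requires keeping boundary power-integrals such as $\int_\Gamma u^{\alpha-1}\,dS$ under control, and in the non-degenerate case no a-priori $L^{\infty}$-bound is to be invoked. I would resolve this by combining the large-$D$/small-$D$ dichotomy with the one-sided information carried by the reaction term — for instance $(a-b)\log\tfrac ab\ge\tfrac12\,a$ whenever $a\ge e\,b$, so that $\int_{\{u^{\alpha}\ge e\,v^{\beta}\}}u^{\alpha}\,dS\le 2D_3$ — which, balanced against the near-constancy of $v$ on $\Gamma$ furnished by $D_2$, should keep all boundary power-integrals controlled precisely in the regime where the estimate is not already trivial; this is, I believe, the essential new ingredient needed compared with the quadratic case of \cite{DeFe06,DeFe07,DeFe08}.
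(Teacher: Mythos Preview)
Your outer architecture is sound and in several respects cleaner than the paper's: the large-$D$/small-$D$ split together with the Taylor expansion of $Q$ around $u_\infty$, and especially the scalar monotonicity argument on $\Phi(\ou)=\ou^\alpha-\bigl((M-\beta|\Omega|\ou)/(\alpha|\Gamma|)\bigr)^\beta$, genuinely replaces the paper's rather involved change of variables $\overline{U^2}=U_\infty^2(1+\mu_1)^2$, $\overline{V^2}=V_\infty^2(1+\mu_2)^2$ and its subsequent case analysis. Your observation that $\Phi'(\ou)=\alpha\ou^{\alpha-1}+\tfrac{\beta^2|\Omega|}{\alpha|\Gamma|}\ov^{\beta-1}$ has no common zero on the closed admissible interval is exactly the right reason why the equilibrium is isolated in the correct quantitative sense.

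The genuine gap is precisely where you flag it, and your proposed patch does not close it. What you actually need is
\[
\bigl|\overline U^{\alpha}-\overline V^{\beta}\bigr|^{2}\ \le\ C\Bigl(\|U^{\alpha}-V^{\beta}\|_{\Gamma}^{2}+\|U-\overline U\|_{\Gamma}^{2}+\|V-\overline V\|_{\Gamma}^{2}\Bigr),
\]
with $U=\sqrt u$, $V=\sqrt v$. Your idea of using $(a-b)\log\tfrac ab\ge\tfrac12 a$ on $\{u^\alpha\ge e\,v^\beta\}$ yields $\int_{\{u^\alpha\ge e\,v^\beta\}}u^\alpha\,dS\le 2D_3$, but this does not make the set $\{u^\alpha\ge e\,v^\beta\}$ small in measure (take $V\equiv 0$ and $U$ a small positive constant), and on its complement one only gets $U^{2\alpha}\le e\,V^{2\beta}$, which is useless without an a-priori bound on $\int_\Gamma V^{2\beta}$---and for $\beta>1$ the Poincar\'e control of $\|V-\overline V\|_\Gamma$ alone does not provide one. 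So ``keeping all boundary power-integrals controlled'' cannot be achieved along this route without extra input.

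The paper resolves this step by a Chebyshev decomposition that never looks at power-integrals at all. Writing $\delta_1=U-\overline U$ on $\Gamma$ and $\delta_2=V-\overline V$, and using only that the constants $\overline U,\overline V$ are bounded by the mass (say $\overline U\le \sqrt{M_\Omega}$, $\overline V\le\sqrt{M_\Gamma}$), one splits $\Gamma=S\cup S^{\perp}$ with $S=\{|\delta_1|\le\sqrt{M_\Omega},\ |\delta_2|\le\sqrt{M_\Gamma}\}$. On $S$ both $U$ and $V$ are bounded by fixed constants, so a first-order Taylor expansion gives
\[
\|U^{\alpha}-V^{\beta}\|_{L^{2}(S)}^{2}\ \ge\ \tfrac12\|\overline U^{\alpha}-\overline V^{\beta}\|_{L^{2}(S)}^{2}-C\bigl(\|\delta_1\|_{\Gamma}^{2}+\|\delta_2\|_{\Gamma}^{2}\bigr).
\]
On $S^{\perp}$ one simply drops the non-negative reaction term and observes that the \emph{constant} $\overline U^{\alpha}-\overline V^{\beta}$ is bounded by $C(M)$, so $\|\overline U^{\alpha}-\overline V^{\beta}\|_{L^{2}(S^{\perp})}^{2}\le C\,|S^{\perp}|\le C(\|\delta_1\|_{\Gamma}^{2}+\|\delta_2\|_{\Gamma}^{2})$ by Chebyshev. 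Adding the two pieces gives the displayed inequality with an explicit $C=C(M,|\Omega|,|\Gamma|,\alpha,\beta)$. This is the ``essential new ingredient'' you were looking for; note that it uses nothing beyond the mass bound and the $L^{2}$ deviations you already have.

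Once you plug this in, your small-$D$ chain goes through verbatim: $|\overline U^{\alpha}-\overline V^{\beta}|\le C\sqrt D$, then $|\ou^{\alpha}-\overline U^{2\alpha}|+|\ov^{\beta}-\overline V^{2\beta}|\le C D$ by Lipschitz of $x\mapsto x^{\alpha}$ on $[0,M_\Omega]$ and $|\ou-\overline U^{2}|=\|\delta_1\|_{\Omega}^{2}/|\Omega|$, hence $|\ou^{\alpha}-\ov^{\beta}|\le C\sqrt D$, hence $|\ou-u_\infty|\le c_\Phi^{-1}C\sqrt D$, and your Taylor bound on $Q$ closes. The resulting proof is a genuine alternative to the paper's and bypasses its whole $\mu_1,\mu_2$ computation and the $\overline{U^2}\lessgtr\varepsilon$ case distinction.
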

	\begin{proof}
		In a preliminary step, we observe that the function $\Phi: \mathbb R^2 \rightarrow \mathbb R$ defined by
		\begin{equation}\label{f5_0}
		\Phi(x,y) = \frac{x\log \frac{x}{y} - (x - y)}{(\sqrt{x}-\sqrt{y})^2}=\Phi\Bigl(\frac{x}{y},1\Bigr)
		\end{equation}
		{can be uniquely continuously extended onto $(0,\infty)^2$ by defining  
			$\Phi(y,y) := \lim_{x\to y} \Phi(\frac{x}{y},1)=2$ for all $y\in (0,+\infty)$, 
			see  \cite{DeFe06}. Moreover, for all $y\in(0,\infty)$, the function $\Phi(\cdot,y)$ is strictly increasing on $(0,\infty)$ and 
			satisfies $\lim\limits_{x\rightarrow 0}\Phi(x,y) = 1$. 
		}
		\medskip
		
		In a first step, we use now the mass conservation $\beta|\Omega|\overline{u} + \alpha|\Gamma|\overline{v} = M$
		to obtain the following bounds for $I_2$:
		\begin{equation}\label{f7}
		\int_{\Omega}\Bigl(\overline{u}\log\frac{\overline{u}}{u_\infty} - (\overline{u}-u_{\infty})\Bigr)\,dx \leq |\Omega|\,\Phi\biggl(\frac{M}{\beta|\Omega|}, u_{\infty}\biggr)\left(\sqrt{\overline{u}}-\sqrt{u_{\infty}}\right)^2
		\end{equation}
		and
		\begin{equation}
		\int_{\Gamma}\Bigl(\overline{v}\log\frac{\overline{v}}{v_\infty} - (\overline{v}-v_{\infty})\Bigr)\,dS \leq |\Gamma|\,\Phi\biggl(\frac{M}{\alpha|\Gamma|}, v_{\infty}\biggr)\left(\sqrt{\overline{v}} - \sqrt{v_{\infty}}\right)^2.
		\label{f8}
		\end{equation}
		Therefore, we have from \eqref{f7} and \eqref{f8} that
		\begin{equation}
		I_2 \leq K_0\left[\left(\sqrt{\overline{v}} - \sqrt{v_{\infty}}\right)^2+\left(\sqrt{\overline{u}}-\sqrt{u_{\infty}}\right)^2\right],
		\label{f8_1}
		\end{equation}
		where 
		$$
		K_0:=\max\left\{ |\Omega|\,\Phi\biggl(\frac{M}{\beta|\Omega|},u_{\infty}\biggr), |\Gamma|\,\Phi\biggl(\frac{M}{\alpha|\Gamma|}, v_{\infty}\biggr)\right\}.
		$$
		\smallskip
		
		Next, considering the entropy dissipation $D(u,v)$, we observe first that 
		\begin{equation}
		\delta_{u}\int_{\Omega}\frac{|\nabla u|^2}{u}dx = 4\delta_u\int_{\Omega}|\nabla \sqrt{u}|^2dx = 4\delta_u\|\nabla U\|_{\Omega}^2,
		\label{f9}
		\end{equation}
		and
		\begin{equation}
		\delta_{v}\int_{\Gamma}\frac{|\nabla_{\Gamma}v|^2}{v}dS = 4\delta_v\|\nabla_\Gamma v\|_{\Gamma}^2 \geq 4\delta_v\,P^{-1}(\Gamma)\|V - \overline{V}\|_{\Gamma}^2.
		\label{f10}
		\end{equation}
		Moreover, the elementary inequality $(a-b)\log\frac{a}{b} \geq 4(\sqrt{a}-\sqrt{b})^2$ yields
		\begin{equation}
		\int_{\Gamma}(v^{\beta} - u^{\alpha})\log\frac{v^{\beta}}{u^{\alpha}}dS \geq 4\|V^{\beta} - U^{\alpha}\|_{\Gamma}^2.
		\label{f11}
		\end{equation}
		Hence,
		\begin{equation}
		\frac{D(u,v)}{2}\geq 2\delta_u\|\nabla U\|_{\Omega}^2 + 2\delta_vP^{-1}(\Gamma)\|V - \overline{V}\|_{\Gamma}^2 + 2\|V^\beta - U^\alpha\|_{\Gamma}^2.
		\label{f11_1}
		\end{equation}
		Combining \eqref{f8_1} and \eqref{f11_1}, we see that in order to prove \eqref{f6} it is sufficient to find positive constants $K_1\le2$ and $K_2$ such that
		\begin{multline}
		2\delta_u\|\nabla U\|_{\Omega}^2+2\delta_vP^{-1}(\Gamma)\|V-\overline{V}\|_{\Gamma}^2 + K_1\|V^{\beta} - U^{\alpha}\|_{\Gamma}^2 \\\geq K_2 K_0\left[\Bigl(\sqrt{\overline{U^2}} - U_{\infty}\Bigr)^2 + \Bigl(\sqrt{\overline{V^{2}}} - V_{\infty}\Bigr)^2\right],
		\label{f12} 
		\end{multline}
		where we denote $\overline{U^2}=\frac{1}{|\Omega|}\int_{\Omega} U^2\,dx$ and $\overline{V^2}=\frac{1}{|\Gamma|}\int_{\Gamma} V^2\,dS$.
		
		\medskip
		
		In the following, we divide the proof of the key estimate \eqref{f12} into several steps. 
		As a preliminary remark, we recall
		that the estimate \eqref{f12} can only hold because of the constraint imposed by the conservation law \eqref{EDDcons} on $U$ and $V$, i.e. 
		\begin{equation}\label{conssqrt}
		\beta |\Omega| \overline{U^2} + \alpha |\Gamma| \overline{V^2} =M,
		\end{equation}
		since without \eqref{conssqrt}, the left hand side of \eqref{f12} vanishes for all constant states $U$, $V$ satisfying $V^{\beta}=U^{\alpha}$, while the right hand side of \eqref{f12} vanishes only at the equilibrium $U_{\infty}$, $V_{\infty}$.
		Thus, the following steps are designed as a chain of estimates, which allows for the conservation law \eqref{EDDcons} rewritten as \eqref{conssqrt} to enter into the proof of estimate \eqref{f12}.
		\medskip
		
		\noindent\underline{\it Step 1:} The goal of this step is to show that there exists a constant $K_3>0$ such that
		\begin{equation}
		\|V^{\beta} - U^{\alpha}\|_{\Gamma}^2 \geq \frac12\|\overline{V}^{\beta}-\overline{U}^{\alpha}\|_{\Gamma}^2 - K_3(\|U - \overline{U}\|_{\Gamma}^2 + \|V-\overline{V}\|_{\Gamma}^2).
		\label{f14}
		\end{equation}
		This inequality establishes a lower bound of the \emph{reaction entropy-dissipation term} in terms of a \emph{reaction entropy-dissipation term for the space averaged concentrations} $\overline{U}$ and $\overline{V}$ at the cost of 
		two terms, which can ultimately be controlled by the \emph{diffusion entropy-dissipation}. 
		
		At first, we remark that the averaged concentrations $\overline{U}$ and $\overline{V}$
		are bounded by Jensen's inequality and the conservation law \eqref{conssqrt}
		\begin{align}\label{boundU}
		\overline{U}^2 \le |\Omega|\overline{U^2} \le \frac{M}{\beta}\le\max\left\{1,\frac{M}{\beta}\right\}=:M_{\Omega},\\
		\overline{V}^2 \le |\Gamma|\overline{V^2} \le \frac{M}{\alpha}\le\max\left\{1,\frac{M}{\alpha}\right\}=:M_{\Gamma}.\label{boundV}
		\end{align}
		
		Next, we consider the following deviations around the spatially averaged concentrations:
		\begin{equation*}
		\delta_1(x) := U -\overline{U}, \quad \forall x\in\Omega, 
		\end{equation*}
		and 
		\begin{equation*}
		\delta_2(x) := V -\overline{V}, \quad \forall x\in\Gamma
		\end{equation*}
		and divide the boundary $\Gamma$ into two disjoint sets:
		\begin{equation*}
		\Gamma = S \cup S^{\perp},
		\end{equation*}
		where
		\begin{equation*}
		S:= \{x\in \Gamma: \ -\overline{U}\leq \delta_1(x) \leq \sqrt{M_{\Omega}},\ -\overline{V} \leq \delta_2(x)  \leq \sqrt{M_{\Gamma}}\}.
		\end{equation*}
		Note that $\delta_1\in   L^2(\Gamma)$ is well-defined by \eqref{f9} and the Trace Theorem \eqref{Trace}. 
		
		Due to the boundedness of  $\delta_1$ and $\delta_2$ in $S$, we readily estimate 
		by using Taylor expansion and Young's inequality
		\begin{multline}
		\|V^{\beta} - U^{\alpha}\|_{L^2(S)}^2= \|(\overline{V}+\delta_2)^{\beta} - (\overline{U}+\delta_1)^{\alpha}\|_{L^2(S)}^2\\ 
		\geq \frac12\|\overline{V}^{\beta} - \overline{U}^{\alpha}\|_{L^2(S)}^2 
		- \|\beta(\overline{V}+\theta_2)^{\beta-1}\delta_2 - \alpha(\overline{U}+\theta_1)^{\alpha-1}\delta_1\|_{L^2(S)}^2\\
		\geq \frac12\|\overline{V}^{\beta} - \overline{U}^{\alpha}\|_{L^2(S)}^2
		- C_3\bigl(M_{\Omega}^{\alpha-1},M_{\Gamma}^{\beta-1}\bigr)\left(\|\delta_1\|_{\Gamma}^2 +\|\delta_2\|_{\Gamma}^2\right),
		\label{b1}
		\end{multline}
		{where we have used that 
			$|\theta_1(x)|\le|\delta_1(x)|\le\sqrt{M_{\Omega}}$ and 
			$|\theta_2(x)|\le|\delta_2(x)|\le\sqrt{M_{\Gamma}}$ are bounded.}
		This proves \eqref{f14} on the set $S$.
		
		It remains to consider the set  
		\begin{equation*}
		S^{\perp} = \{x\in \Gamma: \ \delta_1(x) > \sqrt{M_{\Omega}} \quad \text{or} \quad \delta_2(x)>\sqrt{M_{\Gamma}}\}.
		\end{equation*}
		By using Chebyshev's inequality and by observing that for $\delta_1 > \sqrt{M_{\Omega}} \geq \overline{U}$, the set $\{x\in \Gamma:\delta_1^2 > M_{\Omega}\}$ coincides with the set $\{x\in \Gamma:\delta_1 > \sqrt{M_{\Omega}}\}$ and analog for $\delta_2 > \sqrt{M_{\Gamma}} \geq \overline{V}$, we get
		\begin{equation*}
		|\{x\in\Gamma : \delta_1 > \sqrt{M_{\Omega}}\}|=|\{x\in\Gamma : \delta_1^2\geq M_{\Omega}\}| \leq \frac{\|\delta_1\|_{\Gamma}^2}{M_{\Omega}},\end{equation*}
		and
		\begin{equation*}
		|\{x\in\Gamma : \delta_2 > \sqrt{M_{\Gamma}}\}|=|\{x\in\Gamma : \delta_2^2\geq M_{\Gamma}\}| \leq \frac{\|\delta_2\|_{\Gamma}^2}{M_{\Gamma}}.
		\end{equation*}
		Thus, it follows that 
		$$
		|S^{\perp}| \leq \frac{\|\delta_1\|_{\Gamma}^2}{M_{\Omega}} + 
		\frac{\|\delta_2\|_{\Gamma}^2}{M_{\Gamma}}.
		$$
		By the bounds \eqref{boundU}, \eqref{boundV}, we have moreover that $|\overline{V}^{\beta} - \overline{U}^{\alpha}|\leq C(M_{\Omega}^{\frac{\alpha}{2}},M_{\Gamma}^{\frac{\beta}{2}})$. Hence, since $M_{\Omega}\ge1$ and 
		$M_{\Gamma}\ge1$
		\begin{equation*}
		\|\overline{V}^{\beta} - \overline{U}^{\alpha}\|_{L^2(S^{\perp})}^2 \leq C(M_{\Omega}^{{\alpha}},M_{\Gamma}^{{\beta}})|S^{\perp}| \leq C(M_{\Omega}^{{\alpha}},M_{\Gamma}^{{\beta}})
		\left(\|\delta_1\|_{\Gamma}^2 + 
		\|\delta_2\|_{\Gamma}^2\right)
		\end{equation*}
		and, thus, 
		\begin{equation}
		\|V^{\beta} - U^{\alpha}\|_{L^2(S^{\perp})}^2 \geq 0\geq  \frac12\|\overline{V}^{\beta} - \overline{U}^{\alpha}\|_{L^2(S^{\perp})}^2 - C_4(M_{\Omega}^{{\alpha}},M_{\Gamma}^{{\beta}})(\|\delta_1\|_{\Gamma}^2 + \|\delta_2\|_{\Gamma}^2).
		\label{b2}
		\end{equation}
		Finally, the estimate \eqref{f14} is obtained from \eqref{b1} and \eqref{b2} for a constant $K_3(M_{\Omega}^{{\alpha}},M_{\Gamma}^{{\beta}})= C_3 + C_4$.
		\medskip
		
		With estimate \eqref{f14}, we proceed in estimating the left hand side of \eqref{f12}
		in the following way: We shall look for a positive constant $K_1\le2$ small enough, such that the following two conditions hold:
		\begin{equation*}
		\begin{cases}
		\delta_u\,T^{-1}(\Omega) - K_1K_3 \geq 0,\\
		\delta_vP^{-1}(\Gamma) - K_1K_3 \geq 0,
		\end{cases}
		\quad\Rightarrow\quad K_1\le\min\left\{\frac{\delta_u}{K_3 T(\Omega)},
		\frac{\delta_v}{K_3 P(\Gamma)},2\right\}.
		\end{equation*}
		Here, $T(\Omega)$ denotes the constant of the Trace inequality $T(\Omega)\|\nabla U\|_{\Omega}^2\ge \|U - \overline{U}\|_{\Gamma}^2$. We can then estimate the left hand side of \eqref{f12} by using \eqref{f14}
		\begin{align*}
		2\delta_u\|\nabla &U\|_{\Omega}^2 + 2\delta_vP^{-1}(\Gamma)\|V - \overline{V}\|_{\Gamma}^2 + K_1\|V^{\beta}-U^{\alpha}\|_{\Gamma}^2\nonumber\\ 
		&\geq \delta_u\|\nabla U\|_{\Omega}^2 + \delta_vP^{-1}(\Gamma)\|V - \overline{V}\|_{\Gamma}^2 + \frac{K_1}{2}\|\overline{V}^{\beta} - \overline{U}^{\alpha}\|_{\Gamma}^2\nonumber\\
		&\quad + (\delta_u\,T^{-1}(\Omega) - K_1K_3)\|U - \overline{U}\|_{\Gamma}^2 + (\delta_vP^{-1}(\Gamma) - K_1K_3)\|V - \overline{V}\|_{\Gamma}^2\nonumber\\
		&\geq \delta_u\|\nabla U\|_{\Omega}^2 + \delta_vP^{-1}(\Gamma)\|V - \overline{V}\|_{\Gamma}^2 + \frac{K_1}{2}\|\overline{V}^{\beta} - \overline{U}^{\alpha}\|_{\Gamma}^2.
		\end{align*}
		
		Therefore, in order to show \eqref{f12} it is sufficient to find suitable constants $K_4=\min\{\frac{2\delta_u}{K_1},\frac{2\delta_v}{K_1P(\Gamma)}\}$ and $K_5=\frac{2K_2 K_0}{K_1}$ in the following Step 2 such that: 
		\begin{equation}
		\|\overline{V}^{\beta}-\overline{U}^{\alpha}\|_{\Gamma}^2 + K_4(\|\nabla U\|_{\Omega}^2 + \|V-\overline{V}\|_{\Gamma}^2) \geq K_5\left[(\sqrt{\overline{U^2}} - U_{\infty})^2 + (\sqrt{\overline{V^{2}}} - V_{\infty})^2\right].
		\label{f15}
		\end{equation}
		\medskip
		
		\noindent\underline{\it Step 2:} To prove \eqref{f15}, we use the following change of variables with respect to the equilibrium 
		\begin{equation}
		\overline{U^2} = U_{\infty}^2(1+\mu_1)^2 \qquad\text{ and }\qquad \overline{V^2} = V_{\infty}^{2}(1+\mu_2)^2,
		\label{c1}
		\end{equation}
		which is well-adapted to the mass conservation law \eqref{conssqrt} in the sense that 
		\begin{equation}
		\beta|\Omega|U_{\infty}^{2}(1+\mu_1)^2 + \alpha|\Gamma|V_{\infty}^2(1+\mu_2)^2 = \beta|\Omega|U_{\infty}^2 + \alpha|\Gamma|V_{\infty}^2.
		\label{c2}
		\end{equation}
		From \eqref{c2}, it follows that the new variables $\mu_1$ and $\mu_2$ vary only in a bounded range of admissible values, i.e. $\mu_1\in [-1,+\mu_{1,m})$  and $\mu_2\in [-1,+\mu_{2,m})$, where a straightforward estimate shows $0<\mu_{1,m}<\frac{\alpha|\Gamma|V_{\infty}^2}{\beta|\Omega|U_{\infty}^2}$ and $0<\mu_{2,m}<\frac{\beta|\Omega|U_{\infty}^2}{\alpha|\Gamma|V_{\infty}^2}$.
		
		Moreover, equation \eqref{c2} implies that $\mu_1$ can be expressed  
		as a continuous, bounded function of $\mu_2$ (or the other way round), i.e.  
		\begin{equation}
		\mu_1(\mu_2) = -1 + \sqrt{1 - \frac{\alpha |\Gamma| V_{\infty}^2}{\beta |\Omega|U_{\infty}^2}(2\mu_2 + \mu_2^2)}
		= - R(\mu_2) \mu_2, 
		\label{c3}
		\end{equation}
		where
		\begin{equation*}
		R(\mu_2) :=  \frac{\frac{\alpha|\Gamma|V_{\infty}^2}{\beta|\Omega|U_{\infty}^2}(\mu_2+2)}{1 + \sqrt{1 - \frac{\alpha V_{\infty}^2|\Gamma|}{\beta U_{\infty}^2|\Omega|}(2\mu_2 + \mu_2^2)}}. 
		\end{equation*}
		We obviously have that $\mu_1(\mu_2=0)=0$, which represents the case $\overline{U^2} = U_{\infty}^2$ and $\overline{V^2} = V_{\infty}^2$.
		Moreover, $R(\mu_2)$ is a positive, monotone increasing function with 
		$$
		0<R(-1)=\frac{\frac{\alpha|\Gamma|V_{\infty}^2}{\beta|\Omega|U_{\infty}^2}}{1 + \sqrt{1 + \frac{\alpha |\Gamma|V_{\infty}^2}{\beta |\Omega|U_{\infty}^2}}}
		\le R(\mu_2) \le R(\mu_{2,m})< 2 \frac{\alpha|\Gamma|V_{\infty}^2}{\beta|\Omega|U_{\infty}^2} +1.
		$$
		Hence $R(\mu_2)$ for $\mu_2\in [-1,+\mu_{2,m})$ is uniformly bounded below and above by positive constants.
		\medskip
		
		Next, we notice that{
			\begin{equation*}
			\|\delta_1\|_{\Omega}^2 = \|U - \overline{U}\|_{\Omega}^2 = |\Omega|(\overline{U^2} - \overline{U}^2),
			\end{equation*}}
		and thus
		\begin{equation}
		{\overline{U} = \sqrt{\overline{U^2}} - \frac{1}{|\Omega|(\sqrt{\overline{U^2}}+\overline{U})}\|\delta_1\|_{\Omega}^2 = U_{\infty}(1+\mu_1) - \frac{1}{|\Omega|(\sqrt{\overline{U^2}}+\overline{U})}\|\delta_1\|_{\Omega}^2.
			\label{c6}}
		\end{equation}
		Similarly,
		\begin{equation}
		{\overline{V} = \sqrt{\overline{V^2}} - \frac{1}{|\Gamma|(\sqrt{\overline{V^2}}+\overline{V})}\|\delta_2\|_{\Gamma}^2 = V_{\infty}(1+\mu_2) - \frac{1}{|\Gamma|(\sqrt{\overline{V^2}}+\overline{V})}\|\delta_2\|_{\Gamma}^2.}
		\label{c7}
		\end{equation}
		We denote
		\begin{equation*}
		{R_1(U) := \frac{1}{|\Omega|(\sqrt{\overline{U^2}}+\overline{U})}\quad \text{ and } \quad R_1(V) := \frac{1}{|\Gamma|(\sqrt{\overline{V^2}}+\overline{V})}}
		\end{equation*}
		and remark that due to the lack of lower bounds for $\overline{U^2}\ge\overline{U}^2\ge0$ or $\overline{V^2}\ge\overline{V}^2\ge0$, we have no a-priori bounds to prevent $R_1(U)$ or $R_1(V)$ from being arbitrary large. Thus, we have to distinguish two cases, where the 
		first assumes a lower bound $\varepsilon>0$: 
		
		\noindent\underline{{\bf Case 1)} $\overline{U^2}\geq \varepsilon^2, \overline{V^2}\geq \varepsilon^2$:}\\[2mm]
		By \eqref{c6} and \eqref{c7}, the left hand side of \eqref{f15} is estimated as follows
		\begin{align}
		\|\overline{V}^{\beta} &- \overline{U}^{\alpha}\|_{\Gamma}^2 + K_4(\|\nabla U\|_{\Omega}^2 + \|V - \overline{V}\|_{\Gamma}^2)\nonumber\\
		&=\left\|(V_{\infty}(1+\mu_2) - R_1(V)\|\delta_2\|_{\Gamma}^2)^{\beta} - (U_{\infty}(1+\mu_1)- R_1(U)\|\delta_1\|_{\Omega}^2)^{\alpha}\right\|_{\Gamma}^2\nonumber\\
		&\quad + K_4(\|\nabla U\|_{\Omega}^2 + \|\delta_2\|_{\Gamma}^2)\nonumber\\
		&\geq |\Gamma|\left(V_{\infty}^{\beta}(1+\mu_2)^{\beta} - U_{\infty}^{\alpha}(1+\mu_1)^{\alpha}\right)^2 - C(\varepsilon^2, M)(\|\delta_2\|_{\Gamma}^2 + \frac{1}{P(\Omega)}\|\delta_1\|_{\Omega}^2)\nonumber\\
		&\quad+ K_4(\|\nabla U\|_{\Omega}^2 + \|\delta_2\|_{\Gamma}^2)\nonumber\\
		&\geq |\Gamma|\left(V_{\infty}^{\beta}(1+\mu_2)^{\beta} - U_{\infty}^{\alpha}(1+\mu_1)^{\alpha}\right)^2 - C(\varepsilon^2, M,\Omega)(\|\delta_2\|_{\Gamma}^2 + \|\nabla U\|_{\Omega}^2)\nonumber\\
		&\quad+ K_4(\|\nabla U\|_{\Omega}^2 + \|\delta_2\|_{\Gamma}^2)
		\label{c9}
		\end{align}
		by using the boundedness of $U_{\infty}$, $V_{\infty}$, $\|\delta_1\|_{\Omega}$, $\|\delta_2\|_{\Gamma}$, $\mu_1$, $\mu_2$, $R_1(U)$ and $R_1(V)$, the elementary inequality $(a - b)^2 \ge a^2/2 - b^2$, and by using Poincare's inequality. Choosing $K_4 \geq C(\varepsilon^2, M)$ in \eqref{c9} (by recalling that $K_4 = \min\{\frac{2\delta_u}{K_1}, \frac{2\delta_v}{K_1P(\Gamma)}\}$, this implies an additional constraint to choose $K_1$ small enough), we have
		\begin{equation}
		\|\overline{V}^{\beta} - \overline{U}^{\alpha}\|_{\Gamma}^2 + K_4(\|\nabla U\|_{\Omega}^2 + \|V - \overline{V}\|_{\Gamma}^2) \geq |\Gamma|(V_{\infty}^{\beta}(1+\mu_2)^{\beta} - U_{\infty}^{\alpha}(1+\mu_1)^{\alpha})^2.
		\label{c9_1}
		\end{equation}
		Therefore, in order to prove \eqref{f15}, it's enough to find $K_5$ such that
		\begin{equation*}
		|\Gamma|(V_{\infty}^{\beta}(1+\mu_2)^{\beta} - U_{\infty}^{\alpha}(1+\mu_1)^{\alpha})^2 \geq K_5\left(U_{\infty}^2\mu_1^2 + V_{\infty}\mu_2^2\right)
		\end{equation*}
		or equivalently,
		\begin{equation} 
		\frac{U_{\infty}^2\mu_1^2 + V_{\infty}^2\mu_2^2}{V_{\infty}^{2\beta}\left((1+\mu_2)^{\beta} - (1+\mu_1)^{\alpha}\right)^2} \leq \frac{|\Gamma|}{K_5}.
		\label{d1}
		\end{equation}
		
		In order to  estimate the denominator of \eqref{d1}, we consider the following two cases:\\
		\noindent{In the first case}, we assume that $-1\leq \mu_2 < 0$, from \eqref{c3} we have $\mu_1 > 0$. Then
		\begin{equation*}
		(1+\mu_2)^{\beta} \leq 1 + \mu_2< 1\quad \text{ and }\quad (1+\mu_1)^{\alpha} \geq 1 + \mu_1>1.
		\end{equation*}
		Hence,
		\begin{equation}
		|(1+\mu_2)^{\beta} - (1+\mu_1)^{\alpha}| \geq (1 + \mu_1) - (1 + \mu_2) = \mu_1- \mu_2=(1+R(\mu_2))|\mu_2|.
		\label{d3}
		\end{equation}
		
		\noindent{In the second case}, we consider $\mu_2 \geq 0$ and thus $\mu_1 \leq 0$ by \eqref{c3}. We estimate
		\begin{equation*}
		(1+\mu_2)^{\beta} \geq (1+\mu_2) \text{ and } (1+\mu_1)^{\alpha} \leq 1 + \mu_1,
		\end{equation*}
		and obtain therefore,
		\begin{equation}
		|(1+\mu_2)^{\beta} - (1+\mu_1)^{\alpha}| \geq (1 + \mu_2) - (1+\mu_1) = \mu_2-\mu_1=(1 + R(\mu_2))|\mu_2|.
		\label{d4}
		\end{equation}
		Altogether, \eqref{d3} and \eqref{d4} yield 
		\begin{equation}
		V_{\infty}^{2\beta}\left((1+\mu_2)^{\beta} - (1+\mu_1)^{\alpha}\right)^2 \geq V_{\infty}^{2\beta}(1 + R(\mu_2))^2\mu_2^2.
		\label{d5}
		\end{equation}
		
		For the numerator of \eqref{d1}, we use the expression \eqref{c3} to get
		\begin{equation}
		U_{\infty}^2\mu_1^2 + V_{\infty}^2\mu_2^2 = \left(V_{\infty}^2+ U_{\infty}^2\,R(\mu_2)^2\right)\mu_2^2, \label{d2}
		\end{equation}
		and combining \eqref{d2} and \eqref{d5} completes the proof of \eqref{d1}
		with a constant
		$$
		\frac{|\Gamma|}{K_5} \ge \frac{V_{\infty}^2+ U_{\infty}^2\,R(\mu_2)^2}{(1 + R(\mu_2))^2)V_{\infty}^{2\beta}}.
		$$
		
		Finally, by recalling that $K_5=\frac{2K_2 K_0}{K_1}$ and that $K_1$ was chosen small enough in the previous step, we conclude the first part of the proof of the Lemma by choosing 
		$K_2\le\frac{K_1 K_5}{2 K_0}$.
		\medskip
		
		\noindent\underline{{\bf Case 2)} $\overline{U^2}\leq \varepsilon^2 \text{ or } \overline{V^{2}}\leq \varepsilon^2$:}\\[2mm]
		For the second case, which considers states which are far away from the equilibrium $U\approx U_{\infty}$, $V\approx V_{\infty}$ for sufficiently small $\varepsilon$, we expect to be able to derive a positive lower bound for the entropy-dissipation in terms of $\varepsilon$. At first, we observe that the right hand side of \eqref{f15} is bounded by
		\begin{equation}
		K_5\left[(\sqrt{\overline{U^2}} - U_{\infty})^2 + (\sqrt{\overline{V^2}} - V_{\infty})\right]^2 \leq 2K_5(\overline{u} + \overline{v} + u_{\infty} + v_{\infty}) \leq K_5C(M).
		\label{c14}
		\end{equation}
		
		In the following, we consider two subcases of lower bounds of the entropy-dissipation. The first subcase 
		considers the situation where there is a lower bound of the diffusion entropy-dissipation since $U$ and $V$ 
		are not close to their spacial averages $\overline{U}$ and $\overline{V}$: \\
		\noindent\underline{Subcase 2.1) $\|\delta_1\|_{\Omega}^2 \geq \eta$ or $\|\delta_2\|_{\Gamma}^2 \geq \eta$:}\\
		By using Poincare's inequality $P(\Omega)\|\nabla U\|_{\Omega}^2 \geq \|\delta_1\|_{\Omega}^2$, we see that the left hand side of \eqref{f15} is bounded below by 
		\begin{equation}
		\begin{cases}
		K_4P^{-1}(\Omega)\eta &\text{ in the case } \|\delta_1\|_{\Omega}^2 \geq \eta,\\
		K_4\eta &\text{ in the case } \|\delta_2\|_{\Gamma}^2 \geq \eta.
		\end{cases}
		\label{c15}
		\end{equation}
		Thus, from \eqref{c14} and \eqref{c15}, we can obtain \eqref{f15} by choosing 
		\begin{equation*}
		{K_4 \geq K_5\max\left\{\frac{C(M)}{P(\Omega)\eta}, \frac{C(M)}{\eta}\right\}.}
		\end{equation*}
		
		\noindent\underline{Subcase 2.2) $\|\delta_1\|_{\Omega}^2 \leq \eta$ and $\|\delta_2\|_{\Gamma}^2 \leq \eta$:}\\
		This subcase concerns the situation where $U$ and $V$ are close to their spatial averages $\overline{U}$ and $\overline{V}$.
		Thus, since $U$ and $V$ are not close to the equilibrium $U_{\infty}$ and $V_{\infty}$ for sufficiently small $\varepsilon$ in {\bf Case 2)}, there has to be a lower bound for the reaction entropy-dissipation.
		
		Let us assume first $\overline{V^2} \leq \varepsilon^2$, thus $\overline{V}^{2}\leq \overline{V^2}\leq \varepsilon^2$. From
		\begin{equation*}
		\beta|\Omega|\overline{U^2} + \alpha|\Gamma|\overline{V^2} = M,\quad\text{and}\quad
		{\overline{U^2}  = \frac{\|\delta_1\|_{\Omega}^2}{|\Omega|} +\overline{U}^2,}
		\end{equation*}
		we estimate
		\begin{equation*}
		{\overline{U}^2 = \frac{1}{\beta|\Omega|}(M - \alpha|\Gamma|\overline{V^2}) - \frac{\|\delta_1\|_{\Omega}^2}{|\Omega|}
			\geq \frac{M}{\beta|\Omega|} - \frac{\alpha|\Gamma|}{\beta|\Omega|}\varepsilon^2 - \frac{\eta}{|\Omega|}.}
		\end{equation*}
		Hence, we can expand the reaction term by using $(a - b)^2 \geq a^2/2 - b^2$ as follows
		\begin{align}
		\|\overline{U}^{\alpha}-\overline{V}^{\beta}\|_{\Gamma}^2 &\geq |\Gamma|\left(\frac{1}{2}\overline{U}^{2\alpha} - \overline{V}^{2\beta}\right)
		{\geq |\Gamma|\left(\frac{1}{2}\left(\frac{M}{\beta|\Omega|} - \frac{\alpha|\Gamma|}{\beta|\Omega|}\varepsilon^2 - \frac{\eta}{|\Omega|}\right)^{\alpha} - \varepsilon^{2\beta}\right)}\nonumber\\
		&\geq \frac{|\Gamma|}{2^{\alpha+2}}\left(\frac{M}{\beta|\Omega|}\right)^{\alpha}
		\label{c17}
		\end{align}
		for small enough $\varepsilon$ and $\eta$.
		
		The case $\overline{U^2}\leq \varepsilon^2$ can be treated similarly and yields 
		\begin{equation}
		\|\overline{U}^{\alpha}-\overline{V}^{\beta}\|_{\Gamma}^2 \geq \frac{|\Gamma|}{2^{\beta+2}}\left(\frac{M}{\alpha|\Gamma|}\right)^{\beta}.
		\label{c17_1}
		\end{equation}
		
		From \eqref{c15}, \eqref{c17} and \eqref{c17_1}, we have for both cases $\overline{U^2}\leq \varepsilon^2$ or $\overline{V^2}\leq \varepsilon^2$ that the left hand side of \eqref{f15} is estimated below as
		\begin{multline}
		\|\overline{V}^{\beta} - \overline{U}^{\alpha}\|_{\Gamma}^2 + K_4(\|\nabla U\|_{\Omega}^2 + \|V - \overline{V}\|_{\Gamma}^2)\\
		\geq K_6= \min\left\{K_4P^{-1}(\Omega)\eta, K_4\eta, \frac{|\Gamma|}{2^{\alpha+2}}\left(\frac{M}{\beta|\Omega|}\right)^{\alpha}, \frac{|\Gamma|}{2^{\beta+2}}\left(\frac{M}{\alpha|\Gamma|}\right)^{\beta}\right\}.
		\label{c18_1}
		\end{multline}
		Then, \eqref{f15} follows from \eqref{c14}, \eqref{c18_1} by choosing $K_5 \leq \frac{K_6}{C(M)}$, which means to choose $K_2\le\frac{K_1 K_5}{2 K_0}$ small enough.
	\end{proof}
	
	\begin{remark}
		The Step 2 in the proof of Lemma \ref{bound_I2} can be significantly shortened if we consider the stoichiometric coefficients $\alpha\geq 2$ and $\beta\geq 2$, since we can prove \eqref{c9_1} without case distinction as follows.
		
		By recalling that $\|\delta_1\|_{\Omega}^2 = \|U-\overline{U}\|_{\Omega}^2=|\Omega|(\overline{U^2}-\overline{U}^2)$ and $\|\delta_2\|_{\Gamma}^2 = |\Gamma|(\overline{V^2} - \overline{V}^2)$, we derive the expressions
		\begin{equation*}
		\overline{U} 
		= \sqrt{\overline{U^2} - \|\delta_1\|_{\Omega}^2/|\Omega|}, \qquad
		\overline{V}
		= \sqrt{\overline{V^2} - \|\delta_2\|_{\Gamma}^2/|\Gamma|}.
		\label{c77}
		\end{equation*}
		Thus, by \eqref{c77}, we apply again Taylor expansion to estimate the first term on the left hand side of \eqref{f15} below by
		\begin{multline}\label{newtrick}
		\|\overline{V}^{\beta}-\overline{U}^{\alpha}\|_{\Gamma}^2=
		\left\|\left(\overline{V^2} - \|\delta_2\|_{\Gamma}^2/|\Gamma|\right)^{\frac{\beta}{2}} - \left(\overline{U^2}- \|\delta_1\|_{\Omega}^2/|\Omega|\right)^{\frac{\alpha}{2}}\right\|_{\Gamma}^2\\
		\geq \Bigl\|\overline{V^2}^{\frac{\beta}{2}} - \overline{U^2}^{\frac{\alpha}{2}}\Bigr\|_{\Gamma}^2
		- 2\int_{\Gamma} \left(\overline{V^2}^{\frac{\beta}{2}} - \overline{U^2}^{\frac{\alpha}{2}}\right) \left(\frac{\beta}{2} \Bigl(\overline{V^2}-\frac{\theta_2}{|\Gamma|}\Bigr)^{\!\frac{\beta}{2}-1}\frac{\|\delta_2\|_{\Gamma}^2}{|\Gamma|}\qquad\quad\right.\\
		\left.-\frac{\alpha}{2}\Bigl(\overline{U^2}-\frac{\theta_1}{|\Omega|}\Bigr)^{\!\frac{\alpha}{2}-1} \frac{\|\delta_1\|_{\Omega}^2}{|\Omega|}\right)dS
		\end{multline}
		for some $\theta_1/|\Omega|\le \|\delta_1\|_{\Omega}^2/|\Omega|\le\overline{U^2}\le M_{\Omega}$ and $\theta_2/|\Gamma|\le \|\delta_2\|_{\Gamma}^2/|\Gamma|\le\overline{V^2}\le M_{\Gamma}$. Note that $\frac{\beta}{2} - 1\geq 0$ and $\frac{\alpha}{2} - 1\geq 0$, then the last integral on the right hand side of \eqref{newtrick} can be estimated below by
		\begin{equation*}
		C(M_{\Omega}^{\alpha},M_{\Gamma}^{\beta},\Omega)\left(\frac{\|\delta_1\|_{\Omega}^2}{P(\Omega)}+\|\delta_2\|_{\Gamma}^2\right).
		\end{equation*}
		Thus, from \eqref{c1} and \eqref{newtrick}, we have 
		\begin{multline*}
		\|\overline{V}^{\beta}-\overline{U}^{\alpha}\|_{\Gamma}^2\\
		\geq |\Gamma|\left(V_{\infty}^{\beta}(1+\mu_2)^{\beta} - U_{\infty}^{\alpha}(1+\mu_1)^{\alpha}\right)^2
		- C(M_{\Omega}^{\alpha},M_{\Gamma}^{\beta},\Omega)\left(\frac{\|\delta_1\|_{\Omega}^2}{P(\Omega)}+\|\delta_2\|_{\Gamma}^2\right)\\
		\geq |\Gamma|\left(V_{\infty}^{\beta}(1+\mu_2)^{\beta} - U_{\infty}^{\alpha}(1+\mu_1)^{\alpha}\right)^2 - C(M_{\Omega}^{\alpha},M_{\Gamma}^{\beta},\Omega)\left(\|\nabla U\|_{\Omega}^2+\|\delta_2\|_{\Gamma}^2\right).
		\end{multline*}
		Therefore, by choosing $K_4 \geq C(M_{\Omega}^{\alpha},M_{\Gamma}^{\beta})$, we have proved \eqref{c9_1}:
		\begin{equation*}
		\|\overline{V}^{\beta}-\overline{U}^{\alpha}\|_{\Gamma}^2 + K_4(\|\nabla U\|_{\Omega}^2 + \|V - \overline{V}\|_{\Gamma}^2) \geq |\Gamma|\left(V_{\infty}^{\beta}(1+\mu_2)^{\beta} - U_{\infty}^{\alpha}(1+\mu_1)^{\alpha}\right)^2.
		\end{equation*}
		The rest of the proof follows exactly as the end of {\bf Case 1)} in Lemma \ref{bound_I2}.
	\end{remark}

	\subsection{The degenerate case: $\delta_{v} = 0$} 
	By Remark \ref{existUpper} and Proposition \ref{UpperLower1}, there exist two constants $A>0$ and $B>0$ such that $(A, B)$ is an upper solution to \eqref{e1}, then by the comparison principle we have that, for all $t\geq 0$,
	\begin{equation*}
	\|u(t)\|_{L^{\infty}(\Omega)} \leq A, \quad\text{ and }\quad \|v(t)\|_{L^{\infty}(\Gamma)} \leq B.
	\end{equation*}
	Hence the pair of constants $(A, B)$ in Theorem \ref{lem:E-EDEstimate} can be chosen as the above upper solution. By using the same function $\Phi$ as \eqref{f5_0}, we have
	\begin{align}
	E(u, v) &- E(u_{\infty}, v_{\infty})\nonumber\\
	&= \int_{\Omega}\left(u\log\frac{u}{u_{\infty}} - (u-u_{\infty})\right)dx + \int_{\Gamma}\left(v\log\frac{v}{v_{\infty}} - (v - v_{\infty})\right)dS\nonumber\\
	&\leq \Phi(A, u_{\infty})\int_{\Omega}(\sqrt{u} - \sqrt{u_{\infty}})^2dx + \Phi(B, v_{\infty})\int_{\Gamma}(\sqrt{v}-\sqrt{v_{\infty}})^2dS\nonumber\\
	&\leq \max\{\Phi(A, u_{\infty}), \Phi(B, v_{\infty})\}\left(\|U - U_{\infty}\|_{\Omega}^2 + \|V - V_{\infty}\|_{\Gamma}^2\right).
	\label{new0}
	\end{align}
	The following lemma, roughly speaking, shows that the diffusion of $u$ in $\Omega$ and the reversible reaction of $u$ and $v$ on $\Gamma$ lead to a diffusion-effect of $v$ on $\Gamma$:
	\begin{lemma}\label{lem:degenerate_estimate}
		There exists $C_1, C_2>0$ such that
		\begin{equation}
		C_1\|U^{\alpha}-V^{\beta}\|_{\Gamma}^2 + C_2\left(\|\nabla U\|_{\Omega}^2 + \|U - \overline{U}\|_{\Gamma}^2\right)  \geq C_3\|V-\overline{V}\|_{\Gamma}^2.
		\label{z1}
		\end{equation}
	\end{lemma}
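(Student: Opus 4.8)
The plan is to make quantitative the heuristic stated just before the lemma: if the diffusion contribution $\|\nabla U\|_{\Omega}^{2}$ and the reaction contribution $\|U^{\alpha}-V^{\beta}\|_{\Gamma}^{2}$ to the entropy dissipation are small, then $V$ must be $L^{2}(\Gamma)$-close to a constant, which is exactly \eqref{z1}. Throughout I would work under the standing assumptions of this subsection, namely the comparison bounds $u\le A$, $v\le B$ (so $U,V\le \sqrt A\vee\sqrt B$, and in fact $A^{\alpha}=B^{\beta}$ by Remark~\ref{ConstantRates}), the mass conservation \eqref{EDDcons}, the Trace inequality \eqref{Trace} and Poincar\'e's inequalities. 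The first reduction is to replace $\overline{V}$ by a better constant: since $\overline{V}$ is the best $L^{2}(\Gamma)$-constant approximation of $V$, one has $\|V-\overline{V}\|_{\Gamma}^{2}\le\|V-c_{V}\|_{\Gamma}^{2}$ for any $c_{V}$, and I would take $c_{V}:=(\overline{U})^{\alpha/\beta}$ so that $c_{V}^{\beta}=(\overline{U})^{\alpha}$, and by the mean value theorem together with $U,\overline{U}\le\sqrt A$,
\[
\|V^{\beta}-c_{V}^{\beta}\|_{\Gamma}\le \|U^{\alpha}-V^{\beta}\|_{\Gamma}+\|U^{\alpha}-(\overline U)^{\alpha}\|_{\Gamma}\le \|U^{\alpha}-V^{\beta}\|_{\Gamma}+\alpha A^{(\alpha-1)/2}\|U-\overline{U}\|_{\Gamma}.
\]
Thus it suffices to control $\|V-c_{V}\|_{\Gamma}^{2}$ by $\|V^{\beta}-c_{V}^{\beta}\|_{\Gamma}^{2}$ plus lower order terms; i.e. to invert $s\mapsto s^{\beta}$ in an $L^{2}$ sense.

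The main obstacle is exactly this inversion: for $\beta>1$ the map $s\mapsto s^{\beta}$ is degenerate at $0$, so there is no pointwise lower bound $|V-c_{V}|\le C|V^{\beta}-c_{V}^{\beta}|$ near $V=0$, and a naive superadditivity estimate only delivers the too weak $\|V-\overline V\|_{\Gamma}^{2}\le C\|V^{\beta}-c_{V}^{\beta}\|_{\Gamma}^{2/\beta}$. To obtain the genuinely linear bound \eqref{z1} I would split into two regimes, in the spirit of Case~1 / Case~2 of Lemma~\ref{bound_I2}. \emph{Case A: $\overline{U}\ge\varepsilon_{0}$} for a fixed small $\varepsilon_{0}>0$. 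Then $c_{V}\ge\varepsilon_{0}^{\alpha/\beta}=:\rho_{0}>0$; set $\tau:=\rho_{0}\,2^{-1/\beta}$, so $\tau^{\beta}\le c_{V}^{\beta}/2$, and write $\Gamma=\{V<\tau\}\cup\{V\ge\tau\}$. On $\{V<\tau\}$ one has $|V^{\beta}-c_{V}^{\beta}|\ge c_{V}^{\beta}/2\ge\rho_{0}^{\beta}/2$, so Chebyshev's inequality gives $|\{V<\tau\}|\le 4\rho_{0}^{-2\beta}\|V^{\beta}-c_{V}^{\beta}\|_{\Gamma}^{2}$, hence, using $V,c_{V}\le\sqrt B$, $\int_{\{V<\tau\}}(V-c_{V})^{2}\,dS\le B|\{V<\tau\}|\le C\|V^{\beta}-c_{V}^{\beta}\|_{\Gamma}^{2}$. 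On $\{V\ge\tau\}$ the map $s\mapsto s^{\beta}$ is bi-Lipschitz on $[\tau,\sqrt B]$, so $|V-c_{V}|\le(\beta\tau^{\beta-1})^{-1}|V^{\beta}-c_{V}^{\beta}|$ there, giving $\int_{\{V\ge\tau\}}(V-c_{V})^{2}\,dS\le C\|V^{\beta}-c_{V}^{\beta}\|_{\Gamma}^{2}$. Adding the two and combining with the first paragraph and \eqref{Trace} yields \eqref{z1} in Case~A with explicit constants (depending on $\varepsilon_{0}$).

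\emph{Case B: $\overline{U}<\varepsilon_{0}$.} If $\|\nabla U\|_{\Omega}^{2}\ge\eta$ for a fixed $\eta>0$, then \eqref{z1} is immediate, since $\|V-\overline{V}\|_{\Gamma}^{2}\le\|V\|_{\Gamma}^{2}\le B|\Gamma|$ is bounded. Otherwise $\overline{U^{2}}=(\overline U)^{2}+|\Omega|^{-1}\|U-\overline{U}\|_{\Omega}^{2}\le\varepsilon_{0}^{2}+|\Omega|^{-1}P(\Omega)\eta$, so by mass conservation \eqref{EDDcons}, for $\varepsilon_{0},\eta$ small enough, $\overline{V^{2}}\ge M/(2\alpha|\Gamma|)$; the power-mean (Jensen) inequality then gives $\|V^{\beta}\|_{\Gamma}^{2}=|\Gamma|\,\overline{V^{2\beta}}\ge|\Gamma|(\overline{V^{2}})^{\beta}\ge|\Gamma|\big(M/(2\alpha|\Gamma|)\big)^{\beta}$, a fixed positive number, whereas $\|U^{\alpha}\|_{\Gamma}^{2}\le A^{\alpha-1}\|U\|_{\Gamma}^{2}\le A^{\alpha-1}(2T(\Omega)\eta+2|\Gamma|\varepsilon_{0}^{2})$ is small, so $\|U^{\alpha}-V^{\beta}\|_{\Gamma}^{2}$ is bounded below by a fixed positive constant and \eqref{z1} again holds trivially. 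To conclude, one first fixes $\varepsilon_{0},\eta$ small enough for the Case~B argument, then with these values obtains the Case~A constants, and finally takes $C_{1},C_{2},C_{3}$ to be the worse of the two cases. I expect the delicate point to be precisely the degeneracy of $s\mapsto s^{\beta}$ at the origin: it is this that forces both the Chebyshev splitting on $\{V<\tau\}$ and, in the regime where $\overline{U}$ is itself small, the use of mass conservation to bound the dissipation below — without these, only a sub-linear estimate is available.
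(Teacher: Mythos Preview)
Your proof is correct and shares the paper's overall architecture: the same case split on $\overline{U}\ge\varepsilon_{0}$ versus $\overline{U}<\varepsilon_{0}$, the same reduction via the constant $c_{V}=(\overline U)^{\alpha/\beta}$ (which is precisely the paper's ansatz $V=\overline{U}^{\alpha/\beta}(1+\delta)$), the same reliance on the $L^{\infty}$-bounds, and in the small-$\overline U$ regime the same appeal to mass conservation to force either the reaction term or the gradient term to be bounded below. Your Case~B sub-split on $\|\nabla U\|_{\Omega}^{2}\gtrless\eta$ is equivalent to the paper's sub-split on $\overline{U^{2}}\gtrless M/(2\beta|\Omega|)$, since by Poincar\'e $\overline{U^{2}}$ large with $\overline U$ small forces $\|\nabla U\|_{\Omega}^{2}$ large.

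The one genuine tactical difference is in the non-degenerate regime. To invert $s\mapsto s^{\beta}$ in $L^{2}(\Gamma)$, the paper uses the algebraic observation that $R(\delta):=\frac{(1+\delta)^{\beta}-1}{\delta}\ge 1$ for $\delta\ge -1$, which immediately gives $\|(1+\delta)^{\beta}-1\|_{\Gamma}^{2}\ge\|\delta\|_{\Gamma}^{2}$; combined with $\|V-\overline V\|_{\Gamma}^{2}\le\overline{U}^{2\alpha/\beta}\|\delta\|_{\Gamma}^{2}$ this yields the estimate in one stroke, the factor $\overline{U}^{2\alpha(1-1/\beta)}\ge\varepsilon_{0}^{2\alpha(1-1/\beta)}$ absorbing the degeneracy. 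You instead split $\Gamma=\{V<\tau\}\cup\{V\ge\tau\}$ and use Chebyshev on the small set and a bi-Lipschitz bound on the large set. Both are valid; the paper's route is a little slicker (no set decomposition), while yours is perhaps more transparent about where exactly the degeneracy of $s\mapsto s^{\beta}$ at the origin is being compensated.
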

	\begin{proof}
		Note that, by the Trace Theorem $T(\Omega)\|\nabla U\|_{\Omega}^2 \geq \|U - \overline{U}\|_{\Gamma}^2$, we could neglect the term $\|U - \overline{U}\|_{\Gamma}^2$ in \eqref{z1}. We write it here for the sake of readability.
		
		We will prove the inequality \eqref{z1} by distinguishing cases:
		
		\noindent\underline{\it Case 1: $\overline{U}\geq \varepsilon$.} Applying the ansatz
		\begin{equation*}
		V(x) = \overline{U}^{\frac{\alpha}{\beta}}(1+\delta(x)), \qquad \delta(x) \in [-1,+\infty)\qquad \forall x\in\Gamma,
		\end{equation*}
		we get
		\begin{multline}
		C_1\|U^{\alpha}-V^{\beta}\|_{\Gamma}^2 = C_1\|U^{\alpha}-\overline{U}^{\alpha}\|_{\Gamma}^2 - 2C_1\int_{\Gamma}(U^{\alpha}-\overline{U}^{\alpha})\overline{U}^{\alpha}[(1+\delta)^{\beta}-1]dS \\ + C_1\overline{U}^{2\alpha}\|(1+\delta)^{\beta}-1\|_{\Gamma}^2.
		\label{z2}
		\end{multline}
		{Since $\|U\|_{L^{\infty}(\Gamma)}\leq \sqrt{A}$, we have}
		\begin{equation}
		\|U^{\alpha} - \overline{U}^{\alpha}\|_{\Gamma}^2 \leq C(A)\|U-\overline{U}\|_{\Gamma}^2.
		\label{z3}
		\end{equation}
		From \eqref{z2} and \eqref{z3}, we can estimate the left hand side of \eqref{z1} as follows
		\begin{multline}
		C_1\|U^{\alpha}-V^{\beta}\|_{\Gamma}^2+C_2\|U - \overline{U}\|_{\Gamma}^2
		\geq \left(C_1+\frac{C_2}{C(A)}\right)\|U^{\alpha}-\overline{U}^{\alpha}\|_{\Gamma}^2 \\- 2C_1\int_{\Gamma}(U^{\alpha}-\overline{U}^{\alpha})\overline{U}^{\alpha}[(1+\delta)^{\beta}-1]dS + C_1\overline{U}^{2\alpha}\|(1+\delta)^{\beta}-1\|_{\Gamma}^2\\
		\geq \frac{C_1C_2}{C_1C(A)+C_2}\overline{U}^{2\alpha}\|(1+\delta)^{\beta}-1\|_{\Gamma}^2,
		\label{z4}
		\end{multline}
		where we have used Young's inequality
		\begin{multline}
		2C_1\int_{\Gamma}(U^{\alpha}-\overline{U}^{\alpha})\overline{U}^{\alpha}[(1+\delta)^{\beta}-1]dS\\
		\leq \left(C_1+\frac{C_2}{C(A)}\right)\|U^{\alpha}-\overline{U}^{\alpha}\|_{\Gamma}^2 + \frac{C_1^2C(A)}{C_1C(A) + C_2}\overline{U}^{2\alpha}\|(1+\delta)^{\beta}-1\|_{\Gamma}^2.
		\label{z5}
		\end{multline}
		Next, we observe that the function $R(\delta):=\frac{(1+\delta)^{\beta}-1}{\delta}$ 
		is continuous on $\delta\in[-1,\infty)$ with $R(0)=\beta\ge1$ and bounded below by $R(\delta)\ge R(-1)=1$ for $\delta\in[-1,\infty)$. Thus, 
		\begin{equation}\label{z6}
		\|(1+\delta)^{\beta}-1\|_{\Gamma}^2 = \int_{\Gamma} R(\delta)^2 \delta^2\,dS \ge 
		\int_{\Gamma} \delta^2\,dS.
		\end{equation}
		
		On the other hand, we have
		\begin{align}
		\|V - \overline{V}\|_{\Gamma}^2&= |\Gamma|\left(\overline{V^2}- \overline{V}^2\right)
		=  |\Gamma|\overline{U}^{\frac{2\alpha}{\beta}}\left(\overline{(1+\delta)^2}- \overline{1+\delta}^2\right)\nonumber\\
		&= |\Gamma|\overline{U}^{\frac{2\alpha}{\beta}} \left(1+2\overline{\delta}+\overline{\delta^2}-(1+\overline{\delta})^2\right)
		\le |\Gamma|\overline{U}^{\frac{2\alpha}{\beta}}\overline{\delta^2} \nonumber\\
		&\le \overline{U}^{\frac{2\alpha}{\beta}} \int_{\Gamma} \delta^2\,dS.\label{z7}
		\end{align}
		Now, keeping in mind that $\overline{U}\geq \varepsilon$, we obtain \eqref{z1} from \eqref{z4}, \eqref{z6} and \eqref{z7}, by choosing 
		\begin{equation*}
		C_3 \leq \frac{C_1C_2}{C_1C(M)+C_2}\min\{1; \varepsilon^{2\alpha(1-1/\beta)}\}.
		\end{equation*}
		
		\noindent\underline{\it Case 2: $\overline{U}\leq \varepsilon$.}
		We begin by considering $\overline{U}\le\varepsilon$, for which the contribution of $\|U-\overline{U}\|_{\Gamma}^2$ in \eqref{z1} can be arbitrary small when $U$ is close to $\overline{U}$. However, for $\varepsilon$ sufficiently small, we shall show that the estimate \eqref{z1} still holds because the reaction term 
		$\|U^{\alpha}-V^{\beta}\|_{\Gamma}^2$ can only be "small" if the $\|V-\overline{V}\|_{\Gamma}^2$ is of the "same order of smallness".
		
		We will treat two subcases: a) $\overline{U^2}$ is "small" and b) $\overline{U^2}$ is "big".\\[2mm]
		\noindent\underline{\it Case 2a): $\overline{U^2}\leq \frac{M}{2\beta|\Omega|}$.}
		A direct consequence of the conservation law \eqref{conssqrt} yields
		\begin{equation}\label{z10_0}
		\overline{V^2} = \frac{1}{\alpha|\Gamma|}\left(M-\beta|\Omega|\overline{U^2}\right)\geq \frac{M}{2\alpha|\Gamma|}.
		\end{equation}
		Next, we estimate the left hand side of \eqref{z1} similarly to \eqref{z2}--\eqref{z5} as
		\begin{equation*}
		C_1\|U^{\alpha}-V^{\beta}\|_{\Gamma}^2 + C_2\|U-\overline{U}\|_{\Gamma}^2 \geq C_4\|V^{\beta}-\overline{U}^{\alpha}\|_{\Gamma}^2.
		\end{equation*}
		where $C_4 = \frac{C_1C_2}{C_1C(M)+C_2}$. 
		Then, since $\overline{U}\le\varepsilon$
		\begin{equation*}
		C_4\|V^{\beta} - \overline{U}^{\alpha}\|_{\Gamma}^2 
		\geq C_4\int_{\Gamma}V^{2\beta}dS - 2C_4\varepsilon^{\alpha}\int_{\Gamma}V^{\beta}dS.
		\end{equation*}
		On the other hand, the right hand side of \eqref{z1} is bounded by
		\begin{equation}
		C_3\int_{\Gamma}|V - \overline{V}|_{\Gamma}^2 = C_3|\Gamma|\left(\overline{V^2} - \overline{V}^2\right) \leq C_3|\Gamma|\overline{V^2}.
		\label{z11_1}
		\end{equation}
		Therefore, in order to obtain \eqref{z1}, it is sufficient to prove that
		\begin{equation}
		C_4\int_{\Gamma}V^{2\beta}dS - 2C_4\varepsilon^{\alpha}\int_{\Gamma}V^{\beta}dS \geq C_3|\Gamma|\overline{V^2}.
		\label{z11_2}
		\end{equation}
		
		\noindent\underline{If $\beta \in [1,2]$}, by using Jensen's inequality (and noting that the function $f(x) = x^{\frac{\beta}{2}}$ is concave), we can estimate the left hand side of \eqref{z11_2} as
		{\begin{multline}
			C_4\int_{\Gamma}V^{2\beta}dS - 2C_4\varepsilon^{\alpha}\int_{\Gamma}V^{\beta}dS\\
			\geq C_4|\Gamma|^{1-\beta}\left(\int_{\Gamma}V^2dS\right)^{\beta} - 2C_4\varepsilon^{\alpha}|\Gamma|^{1-\beta/2}\left(\int_{\Gamma}V^2dS\right)^{\beta/2}\\
			= C_4|\Gamma|\overline{V^{2}}^{\beta} - 2C_4\varepsilon^{\alpha}|\Gamma|\overline{V^2}^{\beta/2}.
			\label{z12}
			\end{multline}}
		Since $\overline{V^2}\geq \frac{M}{2\alpha|\Gamma|}$, we can choose $\varepsilon$ and $C_3$ small enough such that
		\begin{equation*}
		{C_4\overline{V^2}^{\beta-1} \geq 2C_4\varepsilon^{\alpha}\overline{V^2}^{\beta/2-1} + C_3.}
		\end{equation*}
		After choosing 
		${\varepsilon\leq (\frac{1}{4})^{\frac{1}{\alpha}}(\frac{M}{2\alpha|\Gamma|})^{\frac{\beta}{2\alpha}}}$
		and ${C_3\leq \frac{C_4}{2}(\frac{M}{2\alpha|\Gamma|})^{\beta -1}}$, this gives together with \eqref{z12} the inequality \eqref{z11_2}.
		
		\noindent\underline{If $\beta \geq 2$}, by using Jensen's inequality, we have
		\begin{equation}
		{C_3|\Gamma|\overline{V^{2}} \leq C_3|\Gamma|\overline{V^{\beta}}^{2/\beta} \quad\text{ and } \quad C_4\int_{\Gamma}V^{2\beta}dS \geq C_4|\Gamma|\overline{V^{\beta}}^2.}
		\label{z14}
		\end{equation}
		Making use of \eqref{z14}, the relation \eqref{z11_2} can be proven provided
		\begin{equation*}
		{C_4|\Gamma|\overline{V^{\beta}}^2 - 2C_4\varepsilon^{\alpha}|\Gamma|\overline{V^{\beta}} \geq C_3|\Gamma|\overline{V^{\beta}}^{2/\beta}}
		\end{equation*}
		or equivalently
		\begin{equation*}
		{C_4\overline{V^{\beta}} \geq 2C_4\varepsilon^{\alpha} + C_3\overline{V^{\beta}}^{(2-\beta)/\beta}.}
		\end{equation*}
		This can be satisfied if we choose, for instance, 
		{$\varepsilon \le \frac{1}{4^{1/\alpha}}\bigl(\frac{M}{2\alpha|\Gamma|}\bigr)^{\beta/2\alpha}$ 
			and $C_3\leq \frac{1}{2}C_2\bigl(\frac{M}{2\alpha|\Gamma|}\bigr)^{\beta -1}$ and keeping in mind that $\overline{V^2}\ge\frac{M}{2\alpha|\Gamma|}$ and $\beta \geq 2$.}
		\medskip
		
		\noindent\underline{\it Case 2b): $\overline{U^2}\geq \frac{M}{2\beta|\Omega|}$.}
		Similarly to \eqref{z10_0}, we deduce from the conservation of mass that $\overline{V^2} \leq M/(2\alpha|\Gamma|)$. We estimate 
		$$
		C_2\|\nabla U\|_{\Omega}^2\ge \frac{C_2}{P(\Omega)} \|U-\overline{U}\|_{\Omega}^2\ge \frac{C_2|\Omega|}{P(\Omega)}\left(\overline{U^2}-\overline{U}^2\right) \ge
		\frac{C_2}{P(\Omega)}\left(\frac{M}{2\beta}-\varepsilon^2\right)
		\ge \frac{C_2}{P(\Omega)}\frac{M}{4\beta},
		$$
		if we chose $\varepsilon^2<\frac{M}{4\beta}$.
		Next, recalling \eqref{z11_1}, we estimate
		$$
		\frac{C_2}{P(\Omega)}\frac{M}{4\beta}
		\ge \frac{C_2}{P(\Omega)}\frac{M}{4\beta} \frac{\overline{V^2}}{M_{\Gamma}}\ge  C_3\|V-\overline{V}\|^2_{\Gamma},
		$$ 
		if we choose $C_3\le \frac{C_2\alpha|\Gamma|}{2\beta P(\Omega)}$.
		
		Altogether, the proof of \eqref{z1} is complete by choosing $\varepsilon$ and $C_3$ small enough in order to satisfy 
		the various constraints from the above cases.
	\end{proof}
	We are now ready to prove Theorem \ref{lem:E-EDEstimate} for degenerate case, that is when $\delta_v=0$, there exists $C_0>0$ such that
	\begin{equation*}
	D(u,v) \geq C_0(E(u,v) - E(u_{\infty}, v_{\infty})).
	\end{equation*}
	\begin{proof}
		We begin by estimating $D(u,v)$ below, that is 
		\begin{align*}
		D(u,v) &= \delta_{u}\int_{\Omega}\frac{|\nabla u|^2}{u}dx + \int_{\Gamma}(u^{\alpha}-v^{\beta})\log\frac{u^{\alpha}}{v^{\beta}}dS\\
		&\geq 4\delta_{u}\|\nabla U\|_{\Omega}^2 + 4\|U^{\alpha}-V^{\beta}\|_{\Gamma}^2,
		\end{align*}
		where we have used the elementary inequality $(a-b)\log(a/b)\geq 4(\sqrt{a} - \sqrt{b})^2$. 
		Then, by applying the Trace inequality
		$\|U - \overline{U}\|_{\Gamma}^2 \leq \|\nabla U\|_\Omega^2T(\Omega)$
		and Lemma \ref{lem:degenerate_estimate}, we get
		\begin{align}
		D(u,v)&\geq 4\delta_u\|\nabla U\|_{\Omega}^2 + 4\|U^{\alpha}-V^{\beta}\|_{\Gamma}^2\nonumber\\ 
		&\geq \theta\left[C_1\|U^{\alpha}-V^{\beta}\|_{\Gamma}^2 + C_2(\|\nabla U\|_{\Omega}^2 + \|U-\overline{U}\|_{\Gamma}^2)\right]\nonumber\\
		&\quad+ \left[4\delta_{u}-\theta C_2(1+T(\Omega))\right]\|\nabla U\|_{\Omega}^2 + (4-\theta C_1)\|U^{\alpha}-V^{\beta}\|_{\Gamma}^2\nonumber\\
		&\geq \theta C_3\|V - \overline{V}\|_{\Gamma}^2 + [4\delta_{u}-\theta C_2(1+T(\Omega))]\|\nabla U\|_{\Omega}^2
		+ (4-\theta C_1)\|U^{\alpha}-V^{\beta}\|_{\Gamma}^2\nonumber\\
		&\geq C_4\|\nabla U\|_{\Omega}^2 + C_5\|V-\overline{V}\|_{\Gamma}^2 + C_6\|U^{\alpha}-V^{\beta}\|_{\Gamma}^2
		\label{new2}
		\end{align}
		where we denote $C_4 = 4\delta_u - \theta C_2(1+T(\Omega))$, $C_5 = \theta C_3$ and $C_6 = (4-\theta C_1)$, where 
		$\theta>0$ is chosen such that the constants $C_4$ and $C_6$ are positive.
		
		In the following, we estimate the relative entropy $E(u,v)-E(u_{\infty}, v_{\infty})$ above by using \eqref{new0} 
	\begin{multline}
		E(u,v)-E(u_{\infty},v_{\infty})
		\leq \max\{\Phi(A,u_{\infty}), \Phi(B,v_{\infty})\}(\|U-U_{\infty}\|_{\Omega}^2 + \|V-V_{\infty}\|_{\Gamma}^2)\\
		\le C_8(\|U-\overline{U}\|_{\Omega}^2 + \|V-\overline{V}\|_{\Gamma}^2 + \|\overline{U}-U_{\infty}\|_{\Omega}^2 + \|\overline{V}-V_{\infty}\|_{\Gamma}^2)
		\label{new3}
		\end{multline}
		with $C_8 =2 \max\{\Phi(A,u_{\infty}), \Phi(B,v_{\infty})\}$. By using \eqref{new3}, we continue to estimate \eqref{new2} below and obtain by using Poincar\'e's inequality, the Trace Theorem and for $0<\varepsilon<1$ to be chosen
		\begin{align}
		D(u,v)
		&\geq \frac{C_4}{2P(\Omega)}\|U-\overline{U}\|_{\Omega}^2 + \frac{C_4}{2T(\Omega)}\|U-\overline{U}\|_{\Gamma}^2 + C_5\|V-\overline{V}\|_{\Gamma}^2 + C_6\|U^{\alpha}-V^{\beta}\|_{\Gamma}^2\nonumber\\
		&\geq \varepsilon\min\left\{\frac{C_4}{2P(\Omega)}, C_5\right\}(\|U-\overline{U}\|_{\Omega}+ \|V-\overline{V}\|_{\Gamma}^2)\nonumber\\
		&\quad+\frac{C_4}{2T(\Omega)}\|U-\overline{U}\|_{\Gamma}^2 +C_6\|U^{\alpha}-V^{\beta}\|_{\Gamma}^2 + C_5(1-\varepsilon)\|V-\overline{V}\|_{\Gamma}^2\nonumber\\
		&\geq \varepsilon\min\left\{\frac{C_4}{2P(\Omega)}, C_5\right\}\!\biggl(\frac{E(u,v)\!-\!E(u_{\infty},v_{\infty})}{C_8} - \|\overline{U}-U_{\infty}\|_{\Omega}^2 - \|\overline{V}-V_{\infty}\|_{\Gamma}^2\!\biggr)\nonumber\\
		&\quad+\frac{C_4}{2T(\Omega)}\|U-\overline{U}\|_{\Gamma}^2 +C_6\|U^{\alpha}-V^{\beta}\|_{\Gamma}^2+ C_5(1-\varepsilon)\|V-\overline{V}\|_{\Gamma}^2\nonumber\\
		&\geq \frac{\varepsilon}{C_8}\min\left\{\frac{C_4}{2P(\Omega)}, C_5\right\}(E(u,v)-E(u_{\infty},v_{\infty}))\nonumber\\
		&\quad+\frac{C_4}{2T(\Omega)}\|U-\overline{U}\|_{\Gamma}^2 +C_6\|U^{\alpha}-V^{\beta}\|_{\Gamma}^2+ C_5(1-\varepsilon)\|V-\overline{V}\|_{\Gamma}^2\nonumber\\
		&\quad - \varepsilon\min\left\{\frac{C_4}{2P(\Omega)}, C_5\right\}\left(\|\overline{U}-U_{\infty}\|_{\Omega}^2 + \|\overline{V}-V_{\infty}\|_{\Gamma}^2\right).
		\label{new4}
		\end{align}
		Now, by applying \eqref{f12} with $C_5(1-\varepsilon)$ in place of $4\delta_v\,P^{-1}(\Gamma)$, we can find a positive constant $\varepsilon>0$ small enough such that
		\begin{multline}
		\frac{C_4}{2T(\Omega)}\|U-\overline{U}\|_{\Gamma}^2 +C_6\|U^{\alpha}-V^{\beta}\|_{\Gamma}^2+ C_5(1-\varepsilon)\|V-\overline{V}\|_{\Gamma}^2\\
		\geq \varepsilon\min\left\{\frac{C_4}{2P(\Omega)}, C_5\right\}\left(\|\overline{U}-U_{\infty}\|_{\Omega}^2 + \|\overline{V}-V_{\infty}\|_{\Gamma}^2\right)
		\label{new5}
		\end{multline}
		holds and we conclude from \eqref{new4} and \eqref{new5} that 
		\begin{equation*}
		D(u,v) \geq \frac{\varepsilon}{C_8}\min\left\{\frac{C_4}{2P(\Omega)}, C_5\right\}(E(u,v)-E(u_{\infty},v_{\infty})),
		\end{equation*}
		which finishes the proof of the Lemma in the case of degenerate diffusion $\delta_v=0$. 
	\end{proof}
	
	As we can see in the proof the degenerate case, we used $L^{\infty}$-bounds of the solution, which are usually unavailable for more general systems. However, we believe that in some cases of stoichiometric coefficients $\alpha$ and $\beta$, there will be a way to show the exponential convergence to equilibrium without using the $L^{\infty}$ bounds. As example, we show that it is possible for the linear case, that is $\alpha = \beta = 1$.
	\begin{proposition}\label{re:linear} 
		Assume that $\alpha = \beta = 1$ and $\delta_v = 0$. The solution to the system \eqref{e1}, which rewrites as
		\begin{equation}
		\begin{cases}
		u_t - \delta_u \Delta u = 0, &x\in\Omega,\\
		\delta_u\frac{\partial u}{\partial \nu} = - u + v, &x\in\Gamma,\\
		v_t = u - v, &x\in \Gamma,\\
		u(0,x) = u_0(x),  &x\in\Omega,\\
		v(0,x) = v_0(x), &x\in \Gamma,
		\end{cases}
		\label{h1}
		\end{equation}
		converges exponentially to equilibrium in $L^2(\Omega)\times L^2(\Gamma)$.
	\end{proposition}
	\begin{remark}
		Due to the lack of surface diffusion $\delta_v\Delta_{\Gamma}v$, when establishing an entropy entropy-dissipation estimate, we need to prove an inequality analogous to \eqref{z1}, that is
		\begin{equation}
		C_1\|U^{\alpha}-V^{\beta}\|_{\Gamma}^2 + C_2\left(\|\nabla U\|_{\Omega}^2 + \|U - \overline{U}\|_{\Gamma}^2\right)  \geq C_3\|V-\overline{V}\|_{\Gamma}^2.\label{zzz}
		\end{equation}
		The main point of Proposition \ref{re:linear} is that, thanks to the linearity of the system, we can use the quadratic structure of the entropy to prove the existence of such an estimate without using the $L^{\infty}$-bounds of the solution (see \eqref{1star} below). For general $\alpha$ and $\beta$ an estimate like \eqref{zzz} seems 
		highly unclear: consider for instance a state $V=U^{\frac{\alpha}{\beta}}$. Then, $\|U^\alpha-V^\beta\|_{\Gamma}=0$
		and the two remaining terms $\|\nabla U\|_{\Omega}^2$ and $\|U - \overline{U}\|_{\Gamma}^2$ on the left hand side of 
		\eqref{zzz} seem not strong enough to ensure 
		the integrability of $V$ for $\alpha \gg \beta$. Such cases remain open problems to be treated in a future work.
	\end{remark}
	\begin{proof}
		The unique equilibrium $(u_{\infty}, v_{\infty})$ satisfies
		\begin{equation}\label{h1_1}
		\begin{cases}
		u_{\infty} = v_{\infty},\\
		|\Omega|u_{\infty} + |\Gamma|v_{\infty} = M
		\end{cases}
		\end{equation}
		where
		\begin{equation*}
		M = \int_{\Omega}u_0(x)dx + \int_{\Gamma}v_0(x)dS
		\end{equation*}
		is the initial mass. It follows that 
		\begin{equation}\label{equi}
		u_{\infty} = v_{\infty} = \frac{M}{|\Gamma| + |\Omega|}.
		\end{equation}
		
		For the sake of simplicity, we consider the quadratic entropy (which is only an admissible entropy functional since \eqref{h1} is linear)
		\begin{equation}
		E(u,v) = \|u\|_{\Omega}^2 + \|v\|_{\Gamma}^2,
		\label{h2}
		\end{equation}
		its entropy-dissipation
		\begin{equation}
		D(u,v) = -\frac{d}{dt}E(u,v) = 2d_u\|\nabla u\|_{\Omega}^2 + 2\|u - v\|_{\Gamma}^2,
		\label{h3}
		\end{equation}
		and the relative entropy
		\begin{equation}
		\begin{aligned}
		E(u,v) - E(u_{\infty},v_{\infty}) &= \|u\|_{\Omega}^2 + \|v\|_{\Gamma}^2 - \|u_{\infty}\|_{\Omega}^2 - \|v_{\infty}\|_{\Gamma}^2.
		\end{aligned}
		\label{h4}
		\end{equation}
		Similarly to \eqref{f4}, we decompose the relative entropy as follow:
		\begin{equation*}
		\begin{aligned}
		E(u,v) - E(u_{\infty}, v_{\infty}) &= [E(u,v)-E(\ou, \ov)] + [E(\ou,\ov) - E(u_{\infty}, v_{\infty})]\\ 
		&= [\|u - \ou\|_{\Omega}^2 + \|v-\ov\|_{\Gamma}^2] + [\|\ou - u_{\infty}\|_{\Omega}^2 + \|\ov - v_{\infty}\|_{\Gamma}^2].
		\end{aligned}
		\end{equation*}
		In the spirit of Lemma \ref{lem:degenerate_estimate}, we want to have an estimate similar to \eqref{z1}:
		\begin{equation}
		C_1\|\nabla u\|_{\Omega}^2 + C_2\|u - v\|_{\Gamma}^2 \ge C_3\|v - \ov\|_{\Gamma}^2.
		\label{0star}
		\end{equation}
		This can be done by estimating 
		\begin{align}
		C_1&\|\nabla u\|_{\Omega}^2 + C_2\|u - v\|_{\Gamma}^2\geq \frac{C_1}{T(\Omega)}\|u - \ou\|_{\Gamma}^2 + C_2\|u - v\|_{\Gamma}^2\nonumber\\
		&\geq \frac{C_1C_2/T(\Omega)}{C_2 + C_1/T(\Omega)}\|\ou - v\|_{\Gamma}^2
		= \frac{C_1C_2/T(\Omega)}{C_2 + C_1/T(\Omega)}(\|v - \ov\|_{\Gamma}^2 + \|\ou - \ov\|_{\Gamma}^2),
		\label{1star}
		\end{align}
		where we have used that $\int_{\Gamma} (\overline{u}-\overline{v})(v-\overline{v})\,dS=0$.
		Now, we can proceed similarly to the degenerate case to get the exponential convergence to equilibrium. For completeness, we sketch the entropy method as follows:
		\begin{equation}
		\begin{aligned}
		D(u,v)&= 2d_u\|\nabla u\|_{\Omega}^2 + 2\|u - v\|_{\Gamma}^2\\ 
		&\geq \frac{d_u}{P(\Omega)}\|u - \ou\|_{\Omega}^2 + \theta\left(C_1\|\nabla u\|_{\Omega}^2 + C_2\|u - v\|_{\Gamma}^2\right)\\
		&\geq \frac{d_u}{P(\Omega)}\|u - \ou\|_{\Omega}^2 + \theta C_3\|v - \ov\|_{\Gamma}^2 + \theta C_3\|\ou - \ov\|_{\Gamma}^2.
		\end{aligned}
		\label{2star}
		\end{equation}
		By the mass conservation law
		$$
		|\Omega|\ou + |\Gamma|\ov = M
		$$
		and the definition of the constant equilibrium \eqref{h1_1}, we observe first
		that  
		\begin{equation}\label{step}
		|\Omega|(\overline{u}-u_{\infty})+ |\Gamma|(\overline{v}-v_{\infty}) = 0.
		\end{equation}
		We thus calculate with $u_{\infty} = v_{\infty}$ and \eqref{step}
		\begin{multline}
		\|\overline{u} - \overline{v}\|_{\Gamma}^2 = |\Gamma|(\overline{u} - u_{\infty} + v_{\infty} -\overline{v})^2 \\= |\Gamma|(\overline{u} - u_{\infty})^2 
		+ |\Omega|(\overline{u} - u_{\infty})^2 
		+ \frac{|\Gamma|}{|\Omega|}(\overline{v}-v_{\infty})^2 + |\Gamma|(\overline{v}-v_{\infty})^2\\
		= \frac{|\Gamma|+|\Omega|}{|\Omega|} \left(\|\overline{u} - u_{\infty}\|^2
		+ \|\overline{v}-v_{\infty}\|^2 \right)\label{3star}
		\end{multline}
		
		
		Combining \eqref{2star} and \eqref{3star} yields
		{\begin{align*}
			D(u,v) &\geq \frac{d_u}{P(\Omega)}\|u - \ou\|_{\Omega}^2 + \theta C_3\|v - \ov\|_{\Gamma}^2 \\
			&\quad+ \theta C_3\frac{|\Omega|+|\Gamma|}{|\Omega|}(\|\ou - u_{\infty}\|_{\Omega}^2 + \|\ov - v_{\infty}\|_{\Gamma}^2)\\
			&\geq C_4(E(u,v) - E(u_{\infty}, v_{\infty})).
			\end{align*}}
		Hence, the solution satisfies the exponential convergence to equilibrium:
		\begin{equation*}
		\|u - u_{\infty}\|_{\Omega}^2 + \|v-v_{\infty}\|_{\Gamma}^2 \leq e^{-C_4t}(\|u_0 - u_{\infty}\|_{\Omega}^2 + \|v_0 - v_{\infty}\|_{\Gamma}^2).
		\end{equation*}
	\end{proof}

	\section{Appendix}\label{App}
	In this appendix, we will give the full proof of Theorem \ref{theo:ExistenceAndUniqueness} by the technique of upper and lower solutions. 
	The following lemma is proved direct computations.
	\begin{lemma}\label{Nonlinearities}
		The functions $F$ and $G$ are defined in \eqref{F} and \eqref{G} respectively are locally Lipschitz. In particular, given a pair of non-negative functions $(\ou, \ov) \ge (0,0)$, 
					there exist two non-negative bounded functions $L_u(t, x)$, $L_v(t, x)\in L^{\infty}([0,\infty)\times\Gamma)$ such that, for all $(\ou,\ov)\geq (u_1,v_1),(u_2,v_2)\geq (0,0)$, the followings hold pointwise in $(t,x)\in[0,\infty)\times\Gamma:$
					\begin{align}
					F(t, x, u_1, v_1) - F(t, x, u_2, v_2) &\leq \alpha L_u(t,x)(u_2 - u_1)_{+}+\alpha L_v(t,x)(v_1 - v_2)_{+},
					\label{FLipschitzupper} \\
					F(t, x, u_1, v_1) - F(t, x, u_2, v_2) &\geq -\alpha L_u(t,x)(u_1 - u_2)_{+}-\alpha L_v(t,x)(v_2 - v_1)_{+},
					\label{FLipschitzlower}
					\end{align}
					and
					\begin{align}
					G(t,x,u_1,v_1) - G(t,x,u_2,v_2) &\leq \beta L_u(t,x)(u_1 - u_2)_{+}+\beta L_v(t,x)(v_2 - v_1)_{+},\label{GLipschitzupper}\\
					G(t,x,u_1,v_1) - G(t,x,u_2,v_2) &\geq -\beta L_u(t,x)(u_2 - u_1)_{+}-\beta L_v(t,x)(v_1 - v_2)_{+},\label{GLipschitzlower}
					\end{align}
					where $(\cdot)_{+}$ denotes the positive part, that is $(w)_{+} = w$ if $w\geq 0$ and $(w)_{+} = 0$ otherwise. 
	\end{lemma}

	\medskip
	
	{By subtracting the lower and upper solutions (as they are defined in Definition \ref{def:UpperLowerSolution}), the comparison Lemma \ref{lem:comparison} yields the following:}
	\begin{lemma}\label{lem:compare}
		If $(\ou, \ov)$ is an upper solution and $(\uu, \uv)$ is a lower solution to \eqref{e1}, then we have $(\ou,\ov)\geq (\uu,\uv)$ in the sense of Definition \ref{aecomparision}.
	\end{lemma}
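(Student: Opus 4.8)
The plan is to reduce Lemma \ref{lem:compare} directly to the comparison principle already proved in Lemma \ref{lem:comparison}, as indicated by the sentence preceding the statement. Fix $0<T<\infty$ and let $\varphi\in C^1(0,T;L^2(\Omega))\cap L^2(0,T;H^1(\Omega))$ and $\psi\in C^1(0,T;L^2(\Gamma))\cap L^2(0,T;H^1(\Gamma))$ be arbitrary test functions with $\varphi,\psi\ge 0$ and $\varphi(T)=\psi(T)=0$, i.e.\ exactly the admissible class appearing in Definitions \ref{def:weaksolution_1}, \ref{def:UpperLowerSolution} and in Lemma \ref{lem:comparison}.

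First, I would write down the defining inequalities: since $(\ou,\ov)$ is an upper solution it satisfies \eqref{a3}, and since $(\uu,\uv)$ is a lower solution it satisfies \eqref{a4} (with $\ou,\ov$ there read as $\uu,\uv$). Subtract, for each of the two weak formulations, the upper-solution inequality from the lower-solution one. Because the maps $u\mapsto \iTO[-u\varphi_t+\delta_u\nabla u\nabla\varphi]$ and $v\mapsto \iTG[-v\psi_t+\delta_v\nabla_{\Gamma}v\nabla_{\Gamma}\psi]$ are linear in their argument, the subtraction produces precisely the expressions of \eqref{e2}: the bulk term $\iTO[-(\uu-\ou)\varphi_t+\delta_u\nabla(\uu-\ou)\nabla\varphi]$, the boundary reaction difference $\iTG\bigl(F(t,x,\uu,\uv)-F(t,x,\ou,\ov)\bigr)\varphi$, and on the right-hand side $\int_{\Omega}(\uu(0)-\ou(0))\varphi$; the inequality direction is preserved since we are subtracting a ``$\ge$'' from a ``$\le$'', and the surface equation is handled identically with $G$, $\psi$, $\nabla_{\Gamma}$. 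For the initial data, \eqref{a3} and \eqref{a4} give $\uu(0,x)\le u_0(x)\le\ou(0,x)$ a.e.\ on $\Omega$ and $\uv(0,x)\le v_0(x)\le\ov(0,x)$ a.e.\ on $\Gamma$, which are exactly the last two lines of \eqref{e2}.

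Having checked that $(\uu,\uv),(\ou,\ov)$ meet all hypotheses of Lemma \ref{lem:comparison} — including the regularity \eqref{regular_u}--\eqref{regular_v}, which is part of Definition \ref{def:UpperLowerSolution} and makes all integrals, in particular the nonlinear boundary terms bounded as in Remark \ref{remark:Definition}, well defined — I would simply apply Lemma \ref{lem:comparison} to conclude $(\uu,\uv)\le(\ou,\ov)$ in the sense of Definition \ref{aecomparision}, i.e.\ $(\ou,\ov)\ge(\uu,\uv)$, which is the claim.

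I do not expect a genuine obstacle: the real analytic work — testing the subtracted system with the positive parts $w_+$ and $z_+$, invoking the local Lipschitz bounds \eqref{FLipschitzupper}, \eqref{GLipschitzupper}, the Trace inequality of Lemma \ref{lem:TraceInequality}, and Gronwall's inequality on $\|w_+\|_{L^2(\Omega)}^2+\|z_+\|_{L^2(\Gamma)}^2$ with vanishing initial value — is already carried out inside Lemma \ref{lem:comparison}. The only care required here is bookkeeping: confirming that the test-function classes coincide and that the orientation of the inequalities is correct after subtraction, so that Lemma \ref{lem:compare} is a one-line corollary.
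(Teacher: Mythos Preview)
Your proposal is correct and matches the paper's approach exactly: the paper treats Lemma~\ref{lem:compare} as an immediate corollary, stating only that subtracting \eqref{a4} from \eqref{a3} puts one in the setting of Lemma~\ref{lem:comparison}. Your write-up simply spells out this subtraction and the verification of hypotheses in detail.
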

	
	While $(\uu,\uv) = (0,0)$ is clearly a lower solution to system \eqref{e1}, the existence of an upper solution to system \eqref{e1}
	is in general a difficult question, which we are only able to answer partially by the following proposition under the technical assumption \eqref{Assump}
	on the reaction rates $k_u(t,x)$ and $k_v(t,x)$.

	\begin{proposition}\label{UpperLower1}
		The pair $(\uu,\uv) = (0,0)$ is a trivial lower solution to system \eqref{e1}. Moreover, if the function
		\begin{equation}\label{pi1}
		\pi(t,x):= \left(\frac{k_u(t,x)}{k_v(t,x)}\right)^{1/\beta} \qquad \text{for}\quad t>0, \; x\in\Gamma,
		\end{equation}
		satisfies either the assumption \eqref{Assumpconst} or the assumption \eqref{Assump},
		then the pair $(\ou, \ov) = (A, B(t,x))$ defined as 
		\begin{equation}\label{defineA1}
		A := \max\left\{\|u_0\|_{L^{\infty}(\Omega)}, \left(\frac{k_{\max}}{k_{\min}}\|v_0\|_{L^{\infty}(\Gamma)}^\beta\right)^{1/\alpha}\right\}
		\end{equation}
		and
		\begin{equation}\label{defineB1}
		B(t,x) := \pi(t,x)A^{\alpha/\beta} \qquad \text{for}\quad t>0, x\in\Gamma
		\end{equation}
		is an upper solution to system \eqref{e1}.
	\end{proposition}
	
	\begin{proof}
		Clearly $(\uu,\uv) = (0,0)$ is a lower solution. Assume that  \eqref{Assump} holds, which is also true 
		provided that  \eqref{Assumpconst} holds. 
		To verify that $(A, B(t,x))$ is an upper solution, we first observe that
		$$
		A \geq \|u_0\|_{L^{\infty}(\Omega)} \qquad \text{ and }\qquad B(0,x) \geq \left(\frac{k_{\min}}{k_{\max}}\right)^{1/\beta}A^{\alpha/\beta}\geq \|v_0\|_{L^{\infty}(\Gamma)} \quad \forall x\in\Gamma,
		$$
		which means that $(A, B(t,x))$ satisfies the last two conditions in \eqref{a3}. Next, we see that the reaction term vanishes,
		\begin{equation}\label{vanishRe}
		k_uA^{\alpha} - k_vB^{\beta}(t,x) = k_uA^{\alpha} - k_v[\pi(t,x)A^{\alpha/\beta}]^{\beta}= 
		0
		\end{equation}
		thanks to \eqref{pi1} and \eqref{defineB1}. Hence the constant $A$ satisfies the first equation in \eqref{a3}. From \eqref{defineB1} and the assumption \eqref{Assump}, it follows that
		\begin{equation*}
		\partial_tB(t,x) - \delta_v\Delta_{\Gamma}B(t,x) \geq 0 \qquad \text{for}\quad t>0, \; x\in\Gamma.
		\end{equation*}
		Thus, by testing this equation with a nonnegative test function $\psi$ satisfying $\psi(T)=0$, we obtain
		\begin{equation*}
		\iTG[-B\psi_t + \delta_v\nabla_{\Gamma}B\nabla_{\Gamma}\psi]dSdt \geq \int_{\Gamma}B(0)\psi(0)dS,
		\end{equation*}
		which in combination with the vanishing reaction term \eqref{vanishRe} shows that $B(t,x)$ satisfies the second equation in \eqref{a3}.
	\end{proof}


	In order to prove our existence result, we introduce the following auxiliary functions, which will be useful for proving the monotonicity of the sequences of upper and lower solutions
	\begin{align}
	f(t,x,u,v) &= F(t,x,u,v) + \alpha L_u(t,x)\,u \nonumber\\
	g(t,x,u,v) &= G(t,x,u,v) + \beta L_v(t,x)\,v.
	\label{fg}
	\end{align}
	where $L_u$ and $L_v$ are in Lemma \ref{Nonlinearities}.
	\begin{lemma}\label{newnonlinearities}
		The functions $f$ and $g$ inherit the following properties from the functions $F$ and $G$:
		\begin{itemize}
			\item[(i)] The functions $f(t,x,u, \cdot)$ and $g(t,x,\cdot,v)$ are non-decreasing for any $t,x\in \mathbb R\times \Gamma$ and any $u, v\in \mathbb R$.
			\item[(ii)] For all $(\ou,\ov)\geq (u_1,v_1)\geq (u_2,v_2)\geq (0,0)$, there holds:
			\begin{equation*}
			f(t, x, u_1, v) - f(t, x, u_2, v) \geq -\alpha L_u(t,x)(u_1 - u_2)_{+}+ \alpha L_u(t,x)(u_1-u_2) =  0,
			\end{equation*}
			and
			\begin{equation*}
			g(t,x,u,v_1) - g(t,x,u,v_2) \geq -\beta L_v(t,x)(v_1 - v_2)_{+}+\beta L_v(t,x)(v_1-v_2) =  0.
			\end{equation*}
			Thus, the functions $f(t,x,\cdot,v)$ and $g(t,x,u,\cdot)$ are monotone non-de\-creasing for all $(\ou,\ov)\geq (u_1,v_1)\geq (u_2,v_2)\geq (0,0)$ contrary to $F$ and $G$.
		\end{itemize}
	\end{lemma}
	\begin{proof}
		The statements of the above Lemma follow directly from Lemma \ref{Nonlinearities}, 
		in particular from \eqref{FLipschitzlower} and \eqref{GLipschitzlower}.
	\end{proof}
	\medskip
	With the notation \eqref{fg}, system \eqref{e1} rewrites as
	\begin{equation}
	\begin{cases}
	u_t - \delta_u\Delta u = 0, &x\in\Omega, t>0,\\
	\delta_u\frac{\partial u}{\partial \nu} + \alpha L_u(t,x)\,u = f(t,x,u,v),&x\in\Gamma, t>0,\\
	v_t - \delta_v\Delta_{\Gamma}v + \beta L_v(t,x)\,v= g(t,x,u,v),&x\in\Gamma, t>0.
	\end{cases}
	\label{a2_2}
	\end{equation}
	Hereafter,  we write $f(u,v)$ and $g(u,v)$ for $f(t,x,u,v)$ and $g(t,x,u,v)$, respectively, except where it is stated otherwise.
	
	\medskip
	
	Starting from the pair of lower and upper solutions $(\uu,\uv)\leq(\ou,\ov)$ 
	as constructed in Proposition \ref{existUpper}, we will construct a sequence of lower solutions $\{(\uu^{(k)}, \uv^{(k)})\}_{k\geq 0}$ as follows:
	\begin{equation}\tag{I.0}\label{I0}
	(\uu^{(0)}, \uv^{(0)}) = (\uu, \uv),
	\end{equation}
	and for all $k \geq 1$, $\uu^{(k)}$ and $\uv^{(k)}$ are the solutions of the following heat equation with inhomogeneous Robin boundary condition:
	\begin{equation}\tag{I.1}\label{I1}
	\begin{cases}
	\partial_t\underline{u}^{(k)} - \delta_u\Delta\underline{u}^{(k)} = 0, &x\in\Omega, t>0,\\
	\delta_u\frac{\partial\uu^{(k)}}{\partial\nu} + \alpha L_u\uu^{(k)} = f(\uu^{(k-1)}, \uv^{(k-1)}), &x\in\Gamma, t>0,\\
	\uu^{(k)}(0,x) = u_0(x) \in L^{\infty}(\Omega),&x\in\Omega,
	\end{cases}
	\end{equation}
	and the following linear inhomogeneous equation:
	\begin{equation}\tag{I.2}\label{I2}
	\begin{cases}
	\partial_t\uv^{(k)} - \delta_v\Delta_{\Gamma}\uv^{(k)} + \beta L_v\uv^{(k)} = g(\uu^{(k-1)}, \uv^{(k-1)}),&x\in\Gamma, t>0,\\
	\uv^{(k)}(0,x) = v_0(x)\in L^{\infty}(\Gamma), &x\in\Gamma.
	\end{cases}
	\end{equation}
	Similarly, we construct a sequence of upper solutions $\{(\ou^{(k)}, \ov^{(k)})\}_{k\geq 0}:$
	\begin{equation}\tag{II.0}\label{II0}
	(\ou^{(0)}, \ov^{(0)}) = (\ou, \ov),
	\end{equation}
	and for all $k \geq 1$, $\ou^{(k)}$ and $\ov^{(k)}$ are the solutions of the following heat equation with inhomogeneous Robin boundary condition:
	\begin{equation}\label{II1}\tag{II.1}
	\begin{cases}
	\partial_t\ou^{(k)} - \delta_{u}\Delta\ou^{(k)} = 0,&x\in\Omega,t>0,\\
	\delta_u\frac{\partial\ou^{(k)}}{\partial\nu} + \alpha L_u\ou^{(k)}= f(\ou^{(k-1)}, \ov^{(k-1)}), &x\in\Gamma, t>0,\\
	\ou^{(k)}(0,x) = u_0(x),&x\in\Omega,
	\end{cases}
	\end{equation}
	and the following linear inhomogeneous equation:
	\begin{equation}\tag{II.2}\label{II2}
	\begin{cases}
	\partial_t\ov^{(k)} - \delta_v\Delta_{\Gamma}\ov^{(k)} + \beta L_v\ov^{(k)} = g(\ou^{(k-1)}, \ov^{(k-1)}), &x\in\Gamma, t>0,\\
	\ov^{(k)}(0,x) = v_0(x), &x\in\Gamma.
	\end{cases}
	\end{equation}
	
	The existence of unique sequences of lower and upper solutions $\uu^{(k)}$ and $\uv^{(k)}$ follows from classical arguments
	in an iterative way starting from \eqref{I0} and \eqref{II0}. Given for instance $(\uu^{(k-1)}, \uv^{(k-1)})$, system \eqref{I1} is a heat equation with inhomogeneous Robin boundary condition and bounded coefficients $L_u(t,x)\in L^{\infty}(\mathbb{R}_+\times\Gamma)$. Thus, the 
	existence of a unique weak solution in the sense of Definition \ref{def:weaksolution_1} follows from \cite{RMJ, RMJ1, NiPhD}, for instance. Moreover, the equation \eqref{I2} is a linear heat equation on a manifold without boundary and the existence of a unique weak solutions follows from \cite[Chapter 6]{Taylorbook}, for instance.
	
	Moreover, if $\uu^{(k-1)}$ and $\uv^{(k-1)}$ satisfy the regularity \eqref{regular_u} and \eqref{regular_v}, then by the locally Lipschitz properties of $f$ and $g$, we obtain $f(\uu^{(k-1)}, \uv^{(k-1)})$, $g(\uu^{(k-1)}, \uv^{(k-1)})\in L^{\infty}([0,T]\times \Gamma)$, which implies from (I.1) that $\uu^{(k)}$ satisfies \eqref{regular_u} and from (I.2) that $\uv^{(k)}$ satisfies \eqref{regular_v}.
	
An analogous argument can be applied to the equations \eqref{II1} and \eqref{II2} 	
in order to get the unique existence and the regular properties of $(\ou^{(k)}, \ov^{(k)})$. 
	
\begin{lemma}\label{lem:monotonicity}
The sequence $\{(\uu^{(k)}, \uv^{(k)})\}_{k\geq 0}$ is an monotone increasing sequence of lower solutions and $\{(\ou^{(k)}, \ov^{(k)})\}_{k\geq 0}$ is a monotone decreasing sequence of upper solutions. More precisely, for all $k\geq 0$,
		\begin{equation*}
		(\ou^{(k)}, \ov^{(k)}) \geq (\ou^{(k+1)}, \ov^{(k+1)}) \geq (\uu^{(k+1)}, \uv^{(k+1)}) \geq (\uu^{(k)}, \uv^{(k)})
		\end{equation*}
in the sense of Definition \ref{aecomparision}.
\end{lemma}
\begin{proof}\hfill\\
{The proof of this lemma follows \cite[Chapter 8]{Pao} 
with suitable changes to adapt to the present setting of our weak solutions.}
\end{proof}
	\medskip
	
	From Lemma \ref{lem:monotonicity} and with the help of the monotone convergence theorem, we have the following almost everywhere pointwise limits in $(0,T)\times \Omega$ and $(0,T)\times \Gamma$ respectively:
	\begin{equation}
	\lim\limits_{k\rightarrow \infty}(\uu^{(k)}, \uv^{(k)}) = (\uu^{*}, \uv^{*}) \text{ and } \lim\limits_{k\rightarrow \infty}(\ou^{(k)}, \ov^{(k)}) = (\ou^{*}, \ov^{*}).
	\label{a11}
	\end{equation}
	
	The following {\it a priori estimates} are uniform in $k$ pointwise for all times $t\in(0,T)$, and will allow us to pass to the limit $k\to\infty$:
	\begin{lemma}\label{lem:APrioriEstimate}
		The sequences $\{\ou^{(k)}\}_{k\geq 0}$ and $\{\ov^{(k)}\}_{k\geq 0}$ are uniformly bounded  in $k$ in $L^{\infty}(0,T;L^{\infty}(\Omega))\cap L^2(0,T;H^1(\Omega))$ and $L^{\infty}(0,T;L^{\infty}(\Gamma))\cap L^2(0,T;H^1(\Gamma))$, respectively, for any given $T>0$. Moreover, the sequence $\{(\ou^{(k)})^{\alpha}|_{\Gamma}\}_{k\geq 0}$ is bounded in $L^2(0,T;L^2(\Gamma))$. We also have analogous estimates for $\{\uu^{(k)}\}_{k\geq 0}$ and $\{\uv^{(k)}\}_{k\geq 0}$.
	\end{lemma}
	\begin{proof}
		We will prove only for $\{\ou^{(k)}\}_{k\geq 0}$ and $\{\ov^{(k)}\}_{k\geq 0}$. The estimate
		\begin{equation*}
		(\ou^{(k)}, \ov^{(k)}) \leq (\ou, \ov)
		\end{equation*}
		yields that $\{\ou^{(k)}\}_{k\geq 0}$ is bounded in $L^{\infty}(0,T;L^{\infty}(\Omega))$ and $\{\ov^{(k)}\}_{k\geq 0}$ is bounded in $L^{\infty}(0,T;L^{\infty}(\Gamma))$. More precisely, there exists $C_0 >0$ independent of $k$ such that
		\begin{equation*}
		\|\ou^{(k)}\|_{L^{\infty}(0,T;L^{\infty}(\Omega))} \leq C_0 \text{ and }\|\ov^{(k)}\|_{L^{\infty}(0,T;L^{\infty}(\Gamma))} \leq C_0 \text{ for all } k\geq 0, T>0.
		\end{equation*} 
		We now rewrite the equation for $\ou^{(k)}$ from \eqref{II1}
		\begin{equation*}
		\begin{cases}
		\partial_t\ou^{(k)} - \delta_u\Delta \ou^{(k)} = 0,\\
		\delta_u\frac{\partial\ou^{(k)}}{\partial\nu} + \alpha L_u \ou^{(k)} = -\alpha[k_u(\ou^{(k-1)})^{\alpha} - k_v(\ov^{(k-1)})^{\beta}] + \alpha L_u \ou^{(k-1)}.
		\end{cases}
		\end{equation*}
		By taking inner product with $\ou^{(k)}$ in $L^2(\Omega)$, we get
		\begin{align}\label{H1estimate}
		\frac{1}{2}\frac{d}{dt}\|\ou^{(k)}\|_{L^2(\Omega)}^2 + \delta_u\|\nabla\ou^{(k)}\|_{L^2(\Omega)}^2
		&= \int_{\Gamma}\left(- \alpha L_u \ou^{(k)} -\alpha[k_u(\ou^{(k-1)})^{\alpha} - k_v(\ov^{(k-1)})^{\beta}] + \alpha L_u \ou^{(k-1)}\right)\ou^{(k)}dS\nonumber\\
		&\leq \alpha\int_{\Gamma}k_v(\ov^{(k-1)})^{\beta}\ou^{(k)}dS + \alpha\int_{\Gamma}L_u\ou^{(k-1)}\ou^{(k)}dS
		\end{align}
		thanks to the nonnegativity of $\ou^{(k)}$, $\ou^{(k-1)}$, $k_u(t,x)\geq 0$ and $L_u = \alpha k_u \ou^{\alpha - 1} \geq 0$. In order to estimate the right hand side of \eqref{H1estimate}, we first have, { by using the modified Trace inequality $\|f\|_{\Gamma}^2 \leq \varepsilon\|\nabla f\|_{\Omega}^2 + C_{\varepsilon}\|f\|_{\Omega}^2$}
		\begin{equation}
		\begin{aligned}
		\alpha\int_{\Gamma}k_v(\ov^{(k-1)})^{\beta}\ou^{(k)}dS
		&\leq 2\alpha \|k_v\|_{\infty}\left(\int_{\Gamma}|\ov^{(k-1)}|^{2\beta}dS + \int_{\Gamma}|\ou^{(k)}|^2dS\right)\\
		&\leq 2\alpha \|k_v\|_{\infty}|\Gamma|\|\ov^{(k-1)}\|_{L^{\infty}(\Gamma)}^{2\beta} + \frac{\delta_u}{4}\|\nabla \ou^{(k)}\|_{L^2(\Omega)}^2 + C\|\ou^{(k)}\|_{L^\infty(\Omega)}^2.
		\end{aligned}
		\label{es1}
		\end{equation}
		Moreover, by using $L_u(t,x) = \alpha k_u \ou^{\alpha-1}(t,x) \leq \alpha \|k_u\|_{\infty}\|\ou\|_{L^{\infty}(0,T;L^{\infty}(\Omega))}^{\alpha - 1} =: C_1$, we get
		\begin{equation}
		\begin{aligned}
		\alpha\int_{\Gamma}L_u&\ou^{(k-1)}\ou^{(k)}dS\leq 2\alpha C_1\left(\int_{\Gamma}|\ou^{(k-1)}|^2dS + \int_{\Gamma}|\ou^{(k)}|^2dS\right)\\
		&\leq 2\alpha C_1\left(\frac{\delta_u}{8\alpha C_1}\|\nabla \ou^{(k-1)}\|_{L^2(\Omega)}^2 + C\|\ou^{(k-1)}\|_{L^2(\Omega)}^2 \right.
		\left.+ \frac{\delta_u}{8\alpha C_1}\|\nabla \ou^{(k)}\|_{L^2(\Omega)}^2 + C\|\ou^{(k)}\|_{L^2(\Omega)}^2\right)\\
		&\leq \frac{\delta_u}{4}\|\nabla \ou^{(k-1)}\|_{L^2(\Omega)}^2 + \frac{\delta_u}{4}\|\nabla \ou^{(k)}\|_{L^2(\Omega)}^2 + C(\|\ou^{(k-1)}\|_{L^{\infty}(\Omega)}^2 + \|\ou^{(k)}\|_{L^{\infty}(\Omega)}^2),
		\end{aligned}
		\label{es2}
		\end{equation}
		with a constant $C=C(C_1,\delta_u,\delta_v,\alpha)$.
		
		By applying \eqref{es1} and \eqref{es2} to \eqref{H1estimate}, we obtain
		\begin{multline}
		\frac{d}{dt}\|\ou^{(k)}\|_{L^2(\Omega)}^2 + \frac{\delta_u}{2}\|\nabla \ou^{(k)}\|_{L^2(\Omega)}^2\\
		\leq \frac{\delta_u}{4}\|\nabla \ou^{(k-1)}\|_{L^2(\Omega)}^2 + C\left(\|\ou^{(k)}\|_{L^{\infty}(\Omega)}^2 + \|\ou^{(k-1)}\|_{L^{\infty}(\Omega)}^2 + \|\ov^{(k-1)}\|_{L^{\infty}(\Gamma)}^{2\beta}\right).
		\label{es3}
		\end{multline}
		Integrating \eqref{es3} on $(0,T)$ yields
		\begin{align}
		\|\nabla \ou^{(k)}\|_{L^2(0,T;L^2(\Omega))}^2 &\leq \frac{2}{\delta_u}\|\ou^{(k)}(0)\|_{L^2(\Omega)}^2+\frac{1}{2}\|\nabla \ou^{(k-1)}\|_{L^2(0,T;L^2(\Omega))}^2\nonumber\\
		&\ \ + C\Bigl(\|\ou^{(k)}\|_{L^{\infty}(0,T;L^{\infty}(\Omega))}^2 + \|\ou^{(k-1)}\|_{L^{\infty}(0,T;L^{\infty}(\Omega))}^2 + \|\ov^{(k-1)}\|_{L^{\infty}(0,T;L^{\infty}(\Gamma))}^{2\beta}\Bigr)\nonumber\\
		&\leq \frac{1}{2}\|\nabla \ou^{(k-1)}\|_{L^2(0,T;L^2(\Omega))}^2 + \frac{2}{\delta_u}\|u_0\|_{L^2(\Omega)}^2+ C(2C_0^2 + C_0^{2\beta})\nonumber\\
		&\leq \frac{1}{2}\|\nabla \ou^{(k-1)}\|_{L^2(0,T;L^2(\Omega))}^2 + C.
		\label{es4}
		\end{align}
		Thus, we can have
		\begin{equation*}
		\begin{aligned}
		\|\nabla \ou^{(k)}\|_{L^2(0,T;L^2(\Omega))}^2 &\leq \frac{1}{2}\|\nabla \ou^{(k-1)}\|_{L^2(0,T;L^2(\Omega))}^2 + C\\
		&\leq \frac{1}{4}\|\nabla \ou^{(k-2)}\|_{L^2(0,T;L^2(\Omega))}^2 + C\left(1+\frac{1}{2}\right)\\
		&\leq \ldots\leq \frac{1}{2^k}\|\nabla \ou\|_{L^2(0,T;L^2(\Omega))}^2 + 2C\\
		\end{aligned}
		\end{equation*}
		Therefore, we have $\{|\nabla \ou^{(k)}|\}_{k\geq 0}$ is bounded in $L^2(0,T;L^2(\Omega))$ uniformly in $k$. By taking into account that $\{\ou^{(k)}\}_{k\geq 0}$ is bounded in $L^{\infty}(0,T;L^{\infty}(\Omega))\hookrightarrow L^2(0,T;L^2(\Omega))$, we see that $\{\ou^{(k)}\}_{k\geq 0}$ is uniformly bounded in $L^2(0,T;H^1(\Omega))$.
		
		We next prove that $\{(\ou^{(k)})^{\alpha}|_{\Gamma}\}_{k\geq 0}$ is bounded in $L^2(0,T;L^{2}(\Gamma))$. Indeed, adapting the estimate in Remark \ref{remark:Definition}, we get
		\begin{align*}
		\|(\ou^{(k)})^{\alpha}\|_{L^2(0,T;L^2(\Gamma))}^2 &= \iTG(\ou^{(k)})^{2\alpha}dSdt\\
		&\leq C\int_{0}^{T}\left(\|\ou^{(k)}\|_{L^{\infty}(\Omega)}^{2\alpha - 2}\|\nabla \ou^{(k)}\|_{L^2(\Omega)}^2 + \|\ou^{(k)}\|_{L^{\infty}(\Omega)}^{2\alpha}\right)dt\\
		&\leq C\|\ou^{(k)}\|_{L^{\infty}(0,T;L^{\infty}(\Omega))}^{2\alpha - 2}\|\nabla \ou^{(k)}\|_{L^{2}(0,T;L^2(\Omega))}^2 + C\|\ou^{(k)}\|_{L^{\infty}(0,T;L^{\infty}(\Omega))}^{2\alpha}.
		\end{align*}
		Thus, the boundedness of $\{(\ou^{(k)})^{\alpha}|_{\Gamma}\}_{k\geq 0}$ in $L^2(0,T;L^2(\Gamma))$ follows from the boundedness of $\{\ou^{(k)}\}_{k\geq 0}$ in $L^{\infty}(0,T;L^{\infty}(\Omega))$ and $L^2(0,T;H^1(\Omega))$.
		
		It remains to prove that $\{\ov^{(k)}\}_{k\geq 0}$ is bounded in $L^2(0,T;H^1(\Gamma))$. Multiplying the equation for $\ov^{(k)}$
		\begin{equation*}
		\partial_t\ov^{(k)} - \delta_v\Delta_{\Gamma} \ov^{(k)} + \beta L_v \ov^{(k)} = \beta[k_u(\ou^{(k-1)})^{\alpha} - k_v(\ov^{(k-1)})^{\beta}] + \beta L_v \ov^{(k-1)}
		\end{equation*}
		by $\ov^{(k)}$ in $L^2(\Gamma)$, we have
		\begin{multline}\label{es5}
		\frac{1}{2}\frac{d}{dt}\|\ov^{(k)}\|_{L^2(\Gamma)}^2 + \delta_v\|\nabla_{\Gamma}\ov^{(k)}\|_{L^2(\Gamma)}^2 + \beta\int_{\Gamma}L_v|\ov^{(k)}|^2dS\\
		= \beta\int_{\Gamma}[k_u(\ou^{(k-1)})^{\alpha} - k_v(\ov^{(k-1)})^{\beta}]\ov^{(k)}dS + \beta\int_{\Gamma}L_v\ov^{(k-1)}\ov^{(k)}dS\\
		\leq \beta \|k_u\|_{\infty}\int_{\Gamma}(\ou^{(k-1)})^{\alpha}\ov^{(k)}dS + \beta C_2\int_{\Gamma}\ov^{(k-1)}\ov^{(k)}dS
		\end{multline}
		since $k_v(\ov^{(k-1)})^{\beta}\ov^{(k)} \geq 0$ and $L_v = \beta k_v \ov^{\beta - 1} \leq \beta \|k_v\|_{\infty}\|\ov\|_{L^{\infty}(0,T;L^{\infty}(\Gamma))}^{\beta - 1} =: C_2$. By Young's inequality, we obtain
		\begin{equation*}
		\int_{\Gamma}(\ou^{(k-1)})^{\alpha}\ov^{(k)}dS \leq \frac{1}{2}\|(\ou^{(k-1)})^{\alpha}\|_{L^2(\Gamma)}^2 + \frac{1}{2}\|\ov^{(k)}\|_{L^2(\Gamma)}^2,
		\end{equation*}
		and
		\begin{equation*}
		\int_{\Gamma}\ov^{(k-1)}\ov^{(k)}dS \leq \frac{1}{2}\|\ov^{(k-1)}\|_{L^2(\Gamma)}^2 + \frac{1}{2}\|\ov^{(k)}\|_{L^2(\Gamma)}^2.
		\end{equation*}
		Therefore, it follows from \eqref{es5} that
		\begin{multline}
		\frac{d}{dt}\|\ov^{(k)}\|_{L^2(\Gamma)}^2 + 2\delta_v\|\nabla_{\Gamma}\ov^{(k)}\|_{L^2(\Gamma)}^2
		\leq \beta \|k_u\|_{\infty}\|(\ou^{(k-1)})^{\alpha}\|_{L^2(\Gamma)}^2 \\+ \beta( \|k_u\|_{\infty} + C_2)\|\ov^{(k)}\|_{L^2(\Gamma)}^2 + \beta C_2\|\ov^{(k-1)}\|_{L^2(\Gamma)}^2.
		\label{es6}
		\end{multline}
		By integrating \eqref{es6} over $(0,T)$, using that $\{(\ou^{(k)})^{\alpha}|_{\Gamma}\}_{k\geq 0}$ is uniformly bounded in $L^2(0,T;L^2(\Gamma))$ and $\{\ov^{(k)}\}_{k\geq 0}$ is uniformly bounded in $L^{\infty}(0,T;L^{\infty}(\Gamma))$, we conclude that $\{\ov^{(k)}\}_{k\geq 0}$ is uniformly bounded in $L^2(0,T;H^1(\Gamma))$. This completes the proof of the Lemma.
	\end{proof}
	
	\begin{proposition}\label{pro:Solutions}
		Both a.e. pointwise limits $(\uu^{*}, \uv^{*})$ and $ (\ou^{*}, \ov^{*})$ of \eqref{a11} are solutions of \eqref{e1}.
	\end{proposition}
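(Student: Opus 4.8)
The plan is to pass to the limit $k\to\infty$ in the weak formulations of the linear iteration schemes \eqref{II1}--\eqref{II2} and \eqref{I1}--\eqref{I2}, exploiting the uniform a priori bounds of Lemma \ref{lem:APrioriEstimate} and the monotonicity of Lemma \ref{lem:monotonicity} to handle the nonlinear boundary terms. I will describe the argument for the decreasing limit $(\ou^*,\ov^*)$; the argument for the increasing limit $(\uu^*,\uv^*)$ is identical, the monotone majorant $\ou^{(0)}=\ou$ (which is bounded by Proposition \ref{UpperLower}, cf. Remark \ref{ConstantRates}, so that its trace lies in $L^{\infty}((0,T)\times\Gamma)$) serving as the common dominating function.

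First I would extract limits. By Lemma \ref{lem:APrioriEstimate} the sequences $\{\ou^{(k)}\}_{k\ge0}$ and $\{\ov^{(k)}\}_{k\ge0}$ are bounded, uniformly in $k$, in $L^2(0,T;H^1(\Omega))$ and $L^2(0,T;H^1(\Gamma))$ respectively, and $\{(\ou^{(k)})^{\alpha}|_{\Gamma}\}_{k\ge0}$ is bounded in $L^2(0,T;L^2(\Gamma))$. Hence, up to a subsequence, $\ou^{(k)}\rightharpoonup\ou^*$ weakly in $L^2(0,T;H^1(\Omega))$ and $\ov^{(k)}\rightharpoonup\ov^*$ weakly in $L^2(0,T;H^1(\Gamma))$: indeed, the weak limit is identified with the a.e. limit of \eqref{a11} because the $L^{\infty}$-bounds of Lemma \ref{lem:APrioriEstimate} combined with \eqref{a11} and dominated convergence force $\ou^{(k)}\to\ou^*$ and $\ov^{(k)}\to\ov^*$ strongly in $L^2(0,T;L^2(\Omega))$ and $L^2(0,T;L^2(\Gamma))$; and since every subsequence admits a further subsequence with the same weak limit, the full sequences converge weakly. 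In particular $\ou^*\in L^2(0,T;H^1(\Omega))$, $\ov^*\in L^2(0,T;H^1(\Gamma))$, while $0\le\ou^*\le\ou$ and $0\le\ov^*\le\ov$, so the $L^{\infty}$ and $H^1$ parts of the regularity \eqref{regular_u}, \eqref{regular_v} hold; the time continuity is recovered at the end.

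The key step, which I expect to be the main obstacle, is the convergence of the nonlinear boundary terms: a.e.-in-$\Omega$ convergence of $\ou^{(k)}$ carries no information about $(\ou^{(k)})^{\alpha}$ on the null set $\Gamma$, and the weak $L^2(\Gamma)$-convergence of the traces coming from continuity of the trace operator is too weak to pass to the limit inside a power nonlinearity. To resolve this I would use that the trace operator $H^1(\Omega)\to L^2(\Gamma)$ is order preserving, so the monotonicity $\ou^{(k+1)}\le\ou^{(k)}$ in $\Omega$ (Lemma \ref{lem:monotonicity}) descends to the traces, $\ou^{(k+1)}|_{\Gamma}\le\ou^{(k)}|_{\Gamma}$ a.e. on $(0,T)\times\Gamma$. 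Being monotone and sandwiched between $0$ and $\ou|_{\Gamma}\in L^{\infty}((0,T)\times\Gamma)$, the traces $\ou^{(k)}|_{\Gamma}$ converge a.e. on $(0,T)\times\Gamma$; their weak $L^2$-limit being the trace of $\ou^*$ (by continuity of the trace operator under the weak $H^1$-convergence of the previous step), the a.e. limit must equal $\ou^*|_{\Gamma}$ too. Dominated convergence (dominant $\ou^{\alpha}|_{\Gamma}$) together with $\alpha,\beta\ge1$ then yields $(\ou^{(k)})^{\alpha}|_{\Gamma}\to(\ou^*)^{\alpha}|_{\Gamma}$ and $(\ov^{(k)})^{\beta}\to(\ov^*)^{\beta}$ in $L^2(0,T;L^2(\Gamma))$ (the latter being even more direct, as $\ov^{(k)}$ lives on $\Gamma$ without a trace), hence, via \eqref{fg}, also $f(\ou^{(k-1)},\ov^{(k-1)})\to f(\ou^*,\ov^*)$, $g(\ou^{(k-1)},\ov^{(k-1)})\to g(\ou^*,\ov^*)$, $L_u\ou^{(k)}|_{\Gamma}\to L_u\ou^*|_{\Gamma}$ and $L_v\ov^{(k)}\to L_v\ov^*$ in $L^2(0,T;L^2(\Gamma))$, the index shift $k-1\to\infty$ not affecting the limit.

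Finally I would pass to the limit in the weak formulations against test functions $\varphi,\psi$ as in Definition \ref{def:weaksolution_1}: the terms $\iTO(-\ou^{(k)}\varphi_t)\,dx\,dt$ and $\iTG(-\ov^{(k)}\psi_t)\,dS\,dt$ converge by the strong $L^2$-convergence, the terms $\iTO\delta_u\nabla\ou^{(k)}\nabla\varphi\,dx\,dt$ and $\iTG\delta_v\nabla_{\Gamma}\ov^{(k)}\nabla_{\Gamma}\psi\,dS\,dt$ by the weak $H^1$-convergence, and the boundary integrals by the previous step, using that $\varphi|_{\Gamma},\psi\in L^2(0,T;L^2(\Gamma))$ and Cauchy--Schwarz. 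Collecting the $L_u$ and $L_v$ contributions with $f(\ou^*,\ov^*)$ and $g(\ou^*,\ov^*)$ and invoking \eqref{fg} reconstitutes exactly $-\alpha(k_u(\ou^*)^{\alpha}-k_v(\ov^*)^{\beta})$ and $\beta(k_u(\ou^*)^{\alpha}-k_v(\ov^*)^{\beta})$, so the limiting identities coincide with the weak formulation \eqref{weakformulation} of \eqref{e1}. From these identities one reads off $\partial_t\ou^*\in L^2(0,T;(H^1(\Omega))')$ and $\partial_t\ov^*\in L^2(0,T;(H^1(\Gamma))')$ (the boundary flux lying in $L^2(0,T;L^2(\Gamma))$ and pairing with the trace), whence $\ou^*\in C([0,T];L^2(\Omega))$ and $\ov^*\in C([0,T];L^2(\Gamma))$ by the standard space-time embedding, and testing with $\varphi(0),\psi(0)\neq0$ gives $\ou^*(0)=u_0$, $\ov^*(0)=v_0$. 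Thus $(\ou^*,\ov^*)$ is a weak solution of \eqref{e1}; running the identical chain of arguments on \eqref{I1}--\eqref{I2} shows $(\uu^*,\uv^*)$ is a weak solution as well.
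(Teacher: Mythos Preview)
Your argument is correct and follows the same overall strategy as the paper: write the weak formulation of the iterates \eqref{I1}--\eqref{I2} (resp.\ \eqref{II1}--\eqref{II2}) and pass to the limit using the uniform bounds of Lemma~\ref{lem:APrioriEstimate}. The paper's own proof is terser and simply invokes the dominated convergence theorem for ``all terms'' in the weak formulation.

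Where you differ is in the level of care on two points the paper leaves implicit. First, for the gradient terms $\iTO\delta_u\nabla\ou^{(k)}\nabla\varphi\,dx\,dt$ you correctly argue via weak $L^2(0,T;H^1)$-convergence rather than dominated convergence, since no a.e.\ convergence of $\nabla\ou^{(k)}$ is available. Second, and this is the more substantive addition, you explicitly justify the convergence of the nonlinear boundary terms: a.e.\ convergence of $\ou^{(k)}$ in $\Omega$ gives no information on the trace, so you invoke the order preservation of the trace operator to transfer the monotonicity of Lemma~\ref{lem:monotonicity} to $\Gamma$, obtain a.e.\ convergence of $\ou^{(k)}|_{\Gamma}$ there, and identify the pointwise limit with $\ou^*|_{\Gamma}$ via the weak $H^1$-limit. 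The paper does not isolate this step. Your route via monotonicity is clean; an alternative would be compactness of the trace $H^1(\Omega)\hookrightarrow L^2(\Gamma)$, which upgrades weak $H^1$-convergence to strong $L^2(\Gamma)$-convergence of traces directly. Finally, your recovery of $C([0,T];L^2)$-regularity and the initial data from $\partial_t\ou^*\in L^2(0,T;(H^1)')$ is a detail the paper omits but which is needed to verify Definition~\ref{def:weaksolution_1} in full.
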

	\begin{proof}
		{This proposition follows from the pointwise convergence, Lemma \ref{lem:APrioriEstimate} and the Dominated Convergence Theorem.}
	\end{proof}
	
	We are now ready to obtain the complete proof of Theorem \ref{theo:ExistenceAndUniqueness}.
	
	\begin{proof}[of Theorem \ref{theo:ExistenceAndUniqueness}]
		The existence of a solution is implied from Proposition \ref{pro:Solutions}. The non-negativity of solutions follows from the Comparison Theorem, see Lemma \ref{lem:comparison}, since $(\uu,\uv) = (0,0)$ is a lower solution. {To prove the uniqueness, we assume that $(u_1, v_1)$ and $(u_2, v_2)$ are two solutions with the same initial data. Thanks to Proposition \ref{UpperLower1}, $(A, B(t,x))$ is an upper solution, thus
				\begin{equation*}
					(u_1, v_1) \leq (A, B) \quad \text{ and } \quad (u_2, v_2) \leq (A, B).
				\end{equation*}
				Moreover, $B \in L^{\infty}([0,T]\times\Gamma)$ due to \eqref{defineB1}, \eqref{pi1} and \eqref{bound_k}. 
				We denote by $w = u_1 - u_2$ and $z = v_1 - v_2$ and have $w(0) = 0$ and $z(0) = 0$. Direct computations give
				\begin{equation*}
					\begin{aligned}
						\frac 12\frac{d}{dt}(\|w\|_{\Omega}^2 + \|z\|_{\Gamma}^2) + d_u\|w\|_{\Omega}^2 + d_v\|z\|_{\Gamma}^2
						&= -\alpha\int_{\Gamma}w(k_u(u_1^{\alpha} - u_2^{\alpha}) - k_v(v_1^{\beta} - v_2^{\beta}))dS \\
						&\quad+ \beta \int_{\Gamma}z(k_u(u_1^{\alpha} - u_2^{\alpha}) - k_v(v_1^{\beta} - v_2^{\beta}))dS.
					\end{aligned}
				\end{equation*}
By using $(u_1, v_1), (u_2, v_2)\leq (A, B)$, the mean value theorem for $u^{\alpha}$, $\alpha\ge1$ and $u^{\beta}$, $\beta\ge1$ and Cauchy's inequality, we obtain 
				\begin{equation*}
					\begin{aligned}
						\frac 12\frac{d}{dt}(\|w\|_{\Omega}^2 + \|z\|_{\Gamma}^2) + d_u\|w\|_{\Omega}^2 + d_v\|z\|_{\Gamma}^2
						&\leq \int_{\Gamma}(\alpha|w| + \beta|z|)(k_u|u_1^{\alpha} - u_2^{\alpha}| + k_v|v_1^{\beta} - v_2^{\beta}|)dS\\
						&\leq C(\alpha, \beta, \|k_u\|_{L^{\infty}}, \|k_v\|_{L^{\infty}}, A^{\alpha-1}, \|B\|_{L^{\infty}}^{\beta-1})(\|w\|_{\Gamma}^2 + \|z\|_{\Gamma}^2)\\
						&\leq C(\|w\|_{\Omega}^2 + \|z\|_{\Gamma}^2) + \frac{d_u}{2}\|\nabla w\|_{\Omega}^2
					\end{aligned}			
				\end{equation*}
				where the last inequality is obtained through a modified trace inequality $\|f\|_{\Gamma}^2 \leq \varepsilon\|\nabla f\|_{\Omega}^2 + C_{\varepsilon}\|f\|_{\Omega}^2$. As a consequence, by employing the classical Gronwall inequality we conclude that
				$w(t) = z(t) = 0$ for all $t\in [0,T]$ since $w(0) = z(0) = 0$, and hence  the proof of the uniqueness is completed.}
	\end{proof}
	
	\vskip 0.5cm
	\noindent{\bf Acknowledgements.} The authors would like to thank the referees for the useful comments which help to improve the presentation of the paper.
	
	The first author is supported by International Research Training Group IGDK 1754. This work has partially been supported by NAWI Graz.		
	\section*{References}

\end{document}